\title[Immersions and translation structures I]{Immersions and translation structures I:\\
The space of structures on the pointed disk}
\author[]{W. Patrick Hooper}
\thanks{Support was provided by N.S.F. Grant DMS-1101233 and a PSC-CUNY Award (funded by The Professional Staff Congress and The City University of New York).}
\address{
The City College of New York\\
New York, NY, USA 10031}
\email{whooper@ccny.cuny.edu}
\newtheorem{theorem}{Theorem}
\newtheorem{proposition}[theorem]{Proposition}
\newtheorem{lemma}[theorem]{Lemma}
\newtheorem{remark}[theorem]{Remark}
\newtheorem{corollary}[theorem]{Corollary}
\newtheorem{example}[theorem]{Example}
\theoremstyle{definition}
\newtheorem{definition}[theorem]{Definition}
\newlength{\savearraycolsep}
	{\setlength{\savearraycolsep}{\arraycolsep}%
	\setlength{\arraycolsep}{#1}%
	\begin{array}{#2}}%
	{\end{array}\setlength{\arraycolsep}{\savearraycolsep}}
\newcommand{\set}[2]{{\{#1~:~\textrm{#2}\}}}
\def\C{\mathbb{C}}%
\def\N{\mathbb{N}}%
\def\R{\mathbb{R}}%
\def\Z{\mathbb{Z}}%
\def\GL{\textit{GL}}
\def\PSL{\textit{PSL}}
\def\0{{\mathbf{0}}}
\def\1{{\mathbf{1}}}
\def\v{{\mathbf{v}}} %
\def\sD{{\mathcal{D}}}
\def\sE{{\mathcal{E}}}
\def\sF{{\mathcal{F}}}
\def\sG{{\mathcal{G}}}
\def\sI{{\mathcal{I}}}
\def\sJ{{\mathcal{J}}}
\def\sK{{\mathcal{K}}}
\def\sL{{\mathcal{L}}}
\def\sM{{\mathcal{M}}}
\def\sP{{\mathcal{P}}}
\def\sR{{\mathcal{R}}}
\def\sS{{\mathcal{S}}}
\def\sT{{\mathcal{T}}}
\def\sV{{\mathcal{V}}}
\def\imod#1{\allowbreak\mkern10mu({\operator@font mod}\,\,#1)}
\def\and{{\quad \textrm{and} \quad}}
\newtheorem{notational_convention}[theorem]{Notational Convention}
\newcommand{\PLH}{{\mathrm{PLH}}}
\newcommand{\Homeo}{{\textrm{Homeo}}}
\newcommand{\PC}{{\textrm{PC}}}
\newcommand{\disk}{{\textrm{Disk}}}
\newcommand{\cdisk}{{\overline{\disk}}}
\newcommand{\id}{{\textit{id}}}
\newcommand{\tM}{{\tilde \sM}}
\newcommand{\tE}{{\tilde \sE}}
\newcommand{\tpi}{{\tilde \pi}}
\newif\ifdraft\drafttrue
\definecolor{darkgreen}{rgb}{0,0.8,0}
\newcommand{\com}[1]{\ifdraft{\textcolor{darkgreen}{FIX: #1}}\else\ignorespaces\fi}
\newcommand{\note}[1]{\ifdraft{\textcolor{blue}{NOTE: #1}}\else\ignorespaces\fi}
\def\emb{\hookrightarrow}
\def\imm{\rightsquigarrow}
\def\dev{{\textit{Dev}}} 
\def\Dev{{\textit{Dev}}} 
\def\fuse{\curlyvee}
\def\Fuse{\bigcurlyvee}
\def\Core{\bigcurlywedge}
\begin{document}
\begin{abstract}
We define a moduli space of translation structures on the open topological disk 
with a basepoint and endow it with a locally-compact metrizable topology. We call
this the immersive topology, because it is defined using the
concept of immersions: continuous maps between subsets of translation
surfaces that respect the basepoints and the translation structures.
Immersions induce a partial ordering on the moduli space, and
we prove the ordering is nearly a complete lattice in the sense of order
theory: The space is only missing a minimal element. Subsequent articles will
uncover more structure and develop a topology on the space of all translation structures.
\end{abstract}
\maketitle

\section*{Introduction}
A {\em translation structure} on a surface $\Sigma$ is an atlas of charts to the plane so that the transition functions are translations. It is a classical observation that by varying the geometry of translation structures on a surface of genus $g$ the structures can naturally converge to a translation structure on a surface of lower genus. 
Recent interest in studying dynamical and geometric properties of translation structures on surfaces of infinite genus has led to the need to take limits of sequences of translation surfaces whose genus is growing or limits of sequences of surfaces of infinite type whose topological type changes in the limit. In this and subsequent articles in this series, we provide a rigorous foundation for formulating such topological statements by considering the space of all translation structures (which simultaneously includes translation structures on surfaces of all topological types). This series of articles will include at least three articles:
\begin{enumerate}
\item[I.] In the current article, we define the {\em immersive topology} on the space of translation structures on the open disk $\Delta$ with basepoint $x_0$. We show this topology is locally compact and metrizable.
\item[II.] In the second article \cite{Immersions2}, we prove that the immersive topology on structures on the disk makes natural geometric and dynamical maps between translation structures on the pointed disk continuous. We also provide a practical method of proving convergence in this topology. This second article is still a work progress, but many of the results can also be found in the earlier preprint \cite{HooperImmersions1}.
\item[III.] In the third article \cite{Immersions3}, we will topologize the space of all translation structures on all surfaces with basepoint. Using the observation that the disk is homeomorphic to the universal cover of any translation surface admitting a translation structure, we push the topology on structures on the disk down to a topology on the space of all structures. This article will be an updated version of the preprint \cite{HooperImmersions2}.
\end{enumerate}

\section*{Annotated Table of Contents}

We provide an annotated table of contents in order to describe the structure of this document and summarize the main results of the paper. Note that sections \ref{sect:planar} and \ref{sect:topology disk 1} contain important definitions and concepts and should be read before setting further into the paper. 
You can get a sense of what is done in each section through the annotations below. Sections \S \ref{sect:basic} and later are structured so that the most important results are stated in the introduction of the section, with proofs and less important results appearing later in the section.

\newcommand{\tocitem}[1]{%
\item[{\hyperref[#1]{\S \ref*{#1}:}}]%
\hyperref[#1]{\textsc{\nameref*{#1}}} \dotfill \autopageref{#1}\\%
}

\begin{itemize}
\tocitem{sect:motivation}
We motivate this work by reviewing the classical theory of translation surfaces and discussing more recent research on translation surfaces of infinite type.

\tocitem{sect:planar}
This section describes the main objects of interest for the paper at a set theoretic level. These are the set $\tM$ of isomorphism classes of translation structures on the pointed disk and the canonical (set-theoretic) disk bundle $\tE$ over $\tM$. The fiber
in $\tE$ over a point of $\tM$ is naturally a pointed topological disk equipped with a translation structure in the isomorphism class of the image in $\tM$. We call these fibers {\em planar surfaces}; they are equivalent to but conceptually easier to work with than isomorphism class of translation structures on the disk (see Proposition \ref{prop:unique planar surface}).

\tocitem{sect:topology disk 1}
An immersion between two planar surfaces is a map respecting basepoints which act as a translation in local coordinates. Whenever an immersion exists, it is unique, and we say a planar surface $P$ {\em immerses} in a planar surface $Q$. An {\em embedding} is an injective immersion. The notion of immersion and embedding yield partial orders on $\tM$. We use these ideas to define the {\em immersive topologies} on $\tM$ and $\tE$.

\tocitem{sect:basic} 
We develop a basic toolkit for working with immersions and embedding between planar surfaces. These tools will be used throughout the paper. We use these basic tools to prove that the immersive topologies on $\tM$ and $\tE$ are Hausdorff.

\tocitem{sect:fusion}
The partial order defined by existence of immersions is nearly a complete lattice (in the sense of order theory): for any non-empty collection of planar surfaces, there is a  minimal (in the sense of the partial order) planar surface in which every surface in the collection immerses. We call this minimal planar surface the {\em fusion} of the collection. 
By adding a minimal element $O$ to the space of planar surfaces, we obtain a complete lattice. That is for any non-empty collection in $\tM \cup \{O\}$ there is a maximal element of $\tM \cup \{O\}$ which immerses in everything in the collection. We call this the {\em core} of the collection. 

\tocitem{sect:rectangular unions}
We introduce a tool for working with the topology. Namely, we study subsets of planar surfaces which are finite unions of rectangles. We use this to develop further intuition into the topologies. We find new natural open sets in $\tM$,
and prove that the immersive topologies on $\tM$ and $\tE$ have a countable basis.

\tocitem{sect:sequences}
Since $\tM$ and $\tE$ are Hausdorff and second-countable, many topological statements can be proved by considering sequences. To this end, we give necessary and sufficient conditions for sequences in these spaces to converge. 

\tocitem{sect:continuity of immersions}
We explain that immersions and embeddings between two planar surfaces are jointly continuous in the choice of two planar surfaces constituting the domain and range of the immersion.

\tocitem{sect:limits and compactness}
We prove that the only way a sequence of planar surfaces can fail to have a convergent subsequence is if the sequence of radii of the maximal Euclidean ball about the basepoints in the surfaces tends to zero. We use this fact to show
that the immersive topologies on $\tM$ and $\tE$ are locally compact and metrizable. As an ingredient in the proof, we show sequences of planar surfaces which are increasing in the sense of immersions converge to the fusion of the sequence.

\item[{\hyperref[sect:mcmullen]{Appendix:}}]%
\hyperref[sect:mcmullen]{\textsc{\nameref*{sect:mcmullen}}} \dotfill \autopageref{sect:mcmullen}\\%
We contrast the immersive topology with McMullen's geometric topology.

\end{itemize}

\section{Connections and motivation}
\label{sect:motivation}

We have taken some pains to distinguish translation structures (as defined above), from the notion of translation surface. A {\em translation surface} can be thought of as a pair $(X,\omega)$ consisting of a Riemann surface $X$ with a non-zero holomorphic $1$-form $\omega$ on $X$. The $1$-form 
$\omega$ can be integrated to obtain charts to the plane,
which are canonical up to postcomposition by translation. These charts are local homeomorphisms away from the set $Z \subset X$ of zeros of $\omega$, where cone singularities with cone angles in 
$2 \pi \Z$ appear. A translation surface $(X,\omega)$ gives rise to a translation structure on $X \smallsetminus Z$. 

Our understanding of translation surfaces has developed extensively since the pioneering work of Masur, Rauzy, Veech and others in the late 1970s and early 1980s. The field has attracted researchers from diverse fields in mathematics including
Teichm\"uller theory, algebraic geometry, and dynamical systems. Indeed, the interplay between these subjects has driven great progress in the field since its inception, and the field continues to be vibrant today.

We briefly discuss some well-established ideas relating to the study of the space of translation surfaces of genus $g \geq 1$, and to the convergence of a sequence of translation surfaces.
For more detail see the survey articles \cite[\S 6]{Smillie00}, \cite[\S 2]{MT} and \cite[\S 3.3]{Zorich06}.
Consider the moduli space of all translation surfaces of fixed genus. The collection of surfaces in this space with a 
fixed number of cone singularities with fixed cone angles is called a {\em stratum} of translation surfaces.
There is a well-studied way to place local coordinates on a stratum given via period coordinates. 
These coordinates give the stratum the structure of an orbifold with a locally affine structure. Of course, the cone singularities of a sequence of translation surfaces in a stratum can collide in the limit. In this case, there can still be a limiting translation surface but it lies outside the stratum. We
can take such a limit by viewing the space of translation surfaces of genus $g$ as identified with the collection
of pairs $(X,\omega)$, where $X$ is a Riemann surface and $\omega$ is a holomorphic $1$-form. As
such the space of translation surface of genus $g$ has the structure of a vector bundle over the moduli space $\sM_g$
of Riemann surfaces of genus $g$. 
Sequences of translation surfaces of genus $g$ can still leave this space. For instance,  a separating subsurface such as a cylinder could collapse to a point under a sequence. In order to take limits of such sequences, one can consider the Deligne-Mumford compactification of $\sM_g$, and 
make appropriate considerations for corresponding degenerations of holomorphic $1$-forms. See \cite[Definition 4.7]{MT}.

This work is primarily motivated by growing interest in the geometry and dynamics of translation surfaces of infinite topological type. Many works share a common interest in topological aspects of the space of all translation surfaces, e.g. to take limits of a sequence or to consider a continuously varying family. Examples of papers in which these ideas appears include \cite{Chamanara04}, \cite{HHW10} and \cite{Higl1} for instance. This sequence of papers will provide a firm foundation for making such statements.

To provide context, it is worth noting that even elementary questions about a single translation structure on an infinite type surface can be difficult to answer. Bowman and Valdez define the singularities of a translation structure to be points in the metric completion and study their structure (and the structure of so called ``linear approaches'' to the singularities) \cite{BV13}. It is not yet clear how the structure of singularities relates to the topology and geometry of the surface, a topic also investigated in \cite{Randecker2014} and \cite{CRW14}.

It is becoming increasingly clear that many of the techniques in the subject of translation surfaces are applicable
to the study of infinite translation surfaces. For instance, there is widespread interest in infinite abelian branched covers
of translation surfaces. Example articles include \cite{DHL14}, \cite{FU14}, \cite{HLT11}, \cite{HS09}, \cite{HW13}, \cite{RTarxiv12}, \cite{RTarxiv11} and \cite{Schmoll11}. 
More relevantly, there is some work to suggest that many methods in use are applicable to infinite translation surfaces which do not arise from covering constructions. 
In the finite genus case, Masur's criterion \cite{M82} says that if the orbit of a translation surface under the Teichm\"uller flow recurs than the vertical straight line flow is uniquely ergodic. It would be nice to have such
a statement in the case where surfaces have infinite genus but finite area. There are has been partial progress. The article \cite{Hinf} concludes ergodic theoretic results such as unique ergodicity about certain special surfaces from a notion of recurrence in the spirit of Masur's criterion. In \cite{Trev14}, a criterion was described for ergodicity of the straight-line flow in translation surfaces of infinite topological type but finite area in terms of a rate of degeneration of a certain geometric quantity under the Teichm\"uller flow (showing that a topology on moduli space may not be necessary for ergodic theoretic results).

Problems in Teichm\"uller theory and 3-manifold topology prompted McMullen to place a geometric topology on the space of all Riemann surfaces with base frames paired with a quadratic differential; see \cite[Appendix]{McMullenAmenability} and \cite[\S 2.3]{McMullenIteration}. Our approach has slightly different aims and results in a slightly different topology; see the discussion in \hyperref[sect:mcmullen]{the appendix}.

Finally, the unfolding construction in polygonal billiards (due to \cite{FK36} and \cite{ZK}) works in the case where the polygon's angles are irrational multiples of $\pi$. We intend to use the topology to prove Conjecture 1.6 of \cite{HooSch09}. The main method of proof involves stretching these surfaces under a divergent sequence of maps in $\GL(2,\R)$ and passing to limit surfaces.

\section{The set of translation structures on the disk}
\label{sect:planar}

\subsection{Translation structures}
Let $\Sigma$ denote an oriented topological surface. A translation structure on $\Sigma$ is a $(G,X)$-structure on $\Sigma$ in the sense of Thurston (see e.g., \cite{Thurston}) where $G$ is the group of translations acting on $X=\R^2$, 
i.e., an atlas of charts $\{(U_j,\phi_j)~:~j \in \sJ\}$ to the plane so that the transition maps are locally restrictions of  translations. Here, we insist that each chart $\phi_j:U_j \to \R^2$ be orientation preserving.

Two translation structures on a oriented surface $\Sigma$ are the {\em same} if the union of the two atlases still determines a translation structure. We say the translation structures $\{(U_j,\phi_j):j \in \sJ\}$ on $\Sigma_1$
and $\{(V_k,\psi_k):k \in \sK\}$ on $\Sigma_2$ are {\em translation isomorphic} if there is an orientation preserving homeomorphism $h:\Sigma_1 \to \Sigma_2$ so that the structure determined by
$\{\big(h(U_j),\phi_j\circ h^{-1}\big):j \in \sJ\}$
is the same as the structure determined by $\{(V_k,\psi_k):k \in \sK\}$. The homeomorphism 
$h$ is called a {\em translation isomorphism} from the first structure to the second. 

\begin{remark}
\label{remark:removing cone singularities}
We do not allow cone singularities in our translation structures. A translation structure may be obtained from a translation surface with cone singularities by removing the singular points from the underlying topological surface.
\end{remark}

\subsection{Translation structures on the pointed disk}

Let $\Delta_1$ and $\Delta_2$ be oriented open topological $2$-dimensional disks with basepoints $x_1$ and $x_2$, respectively.
We say translation structures on these two disks are {\em isomorphic} if there is a translation isomorphism from the first structure to the second which respects basepoints. We call a translation isomorphism which respects basepoints an {\em isomorphism}.

Note that a translation structure on $\Delta_1$ is always isomorphic to a translation structure on $\Delta_2$; simply push the structure forward under an orientation preserving homeomorphism $\Delta_1 \to \Delta_2$. 

\begin{remark}[Examples of structures on the disk]
We briefly state the main construction of interest to us. If a surface $\Sigma$ admits a translation structure
with a selected basepoint, then its universal cover is a topological disk with basepoint. We can then lift the translation structure to the disk. This can be done, for instance, for translation surfaces homeomorphic to a closed surface with the cone singularities removed; see Remark
\ref{remark:removing cone singularities}.
There are other translation structures on the pointed disk as well. Any open topological disk in the plane containing the origin $\0$ admits a natural translation structure. This also works more generally: an open topological disk containing the basepoint in a surface with a translation structure gives a translation structure on the disk.
\end{remark}

\subsection{The set-theoretic moduli space}
Throughout this paper, $\Delta$ denotes a fixed choice of an oriented open topological $2$-dimensional disk with basepoint $x_0 \in \Delta$. 
Let $\{(U_j,\phi_j): j \in \sJ\}$ be an
atlas of charts determining a translation structure on $\Delta$. Because $\Delta$ is simply connected, 
by analytic continuation there is a
unique map $\phi:\Delta \to \R^2$ so that
\begin{enumerate}
\item $\phi(x_0)=\0$, where $\0=(0,0) \in \R^2$. 
\item For each $j \in \sJ$, the map $\phi|_{U_j}$ agrees with $\phi_j$ up to postcomposition with a translation.
\end{enumerate} 
This map $\phi$ is a local homeomorphism called the {\em developing map} in the language of $(G,X)$ structures. See \cite[\S 3.4]{Thurston}. 
Observe that the single chart $(\Delta,\phi)$ determines a translation surface structure on $\Delta$ which is the same as the original structure. Conversely,
each orientation preserving local homeomorphism $\phi:\Delta \to \R^2$ determines a translation structure: the one determined by the atlas $\{(\Delta,\phi)\}$. 

We will say a {\em pointed local homeomorphism} (from $(\Delta,x_0)$ to $(\R^2,\0)$) is an
orientation preserving local homeomorphism $\phi:\Delta \to \R^2$ so that $\phi(x_0)=\0$. We use $\PLH$
to denote the collection of all such maps. Observe that our developing maps lie in $\PLH$, and $\PLH$ is naturally identified with the collection of all translation structures
on $(\Delta,x_0)$ modulo sameness.

Let $\Homeo_+(\Delta,x_0)$ denote the group of orientation preserving homeomorphisms $\Delta \to \Delta$ which fix the basepoint $x_0$. 
Note that the translation structures on $\Delta$ determined by 
$\phi$ and $\psi$ in $\PLH$ are isomorphic if and only if there is an
$h \in \Homeo_+(\Delta,x_0)$
so that $\psi=\phi \circ h^{-1}.$ Thus, the (set-theoretic) moduli space of all translation structures on $(\Delta,x_0)$ modulo isomorphism is given by 
$$\tM=\PLH/\Homeo_+(\Delta,x_0).$$
We use the notation $[\phi]$ to indicate the $\Homeo_+(\Delta,x_0)$-equivalence class of $\phi \in \PLH$.

\begin{remark}[Quotient topology]
\label{rem:quotient topology}
One can endow $PLH$ with the compact-open topology and $\tM$ with the resulting quotient topology. This is not what we do in this paper, because the resulting topology is not Hausdorff. The open unit disk and the plane can be considered to be points in $\tM$, and 
every open set containing the unit disk in the quotient topology also contains the plane.
\end{remark}

\subsection{The disk bundle over moduli space}
The group $\Homeo_+(\Delta,x_0)$ naturally acts on $\PLH \times \Delta$ by 
$$(\phi,y) \mapsto \big(\phi \circ h^{-1}, h(y)\big)
\quad \text{for $h \in \Homeo_+(\Delta,x_0)$.}$$
The {\em canonical disk bundle over $\tM$} is given by
$$\tE=\big(\PLH \times \Delta\big)/\Homeo_+(\Delta,x_0).$$
We denote the $\Homeo_+(\Delta,x_0)$-equivalence class of $(\phi, y)$ by $[\phi, y] \in \tE$. 

Because of the description of the $\Homeo_+(\Delta,x_0)$-action, there is a canonical map
\begin{equation}
\label{eq:dev}
\dev:\tE \to \R^2; \quad [\phi,y] \mapsto \phi(y).
\end{equation}
We call this map the {\em (bundle-wide) developing map}.
There is also a natural projection from $\tE$ on to the moduli space $\tM$ given by
$$\tpi:\tE \to \tM; \quad [\phi,y] \mapsto [\phi].$$

\subsection{Structures on the fibers}
\label{sect: fibers}
We will call each fiber $\tpi^{-1}([\phi])$ {\em a planar surface}.
Observe that the choice of a representative $\phi \in [\phi]$ yields an identification of the planar surface $\tpi^{-1}([\phi])$ with $\Delta$:
\begin{equation}
\label{eq:i phi}
i_\phi:\Delta \to \tpi^{-1}([\phi]); \quad y \mapsto [\phi,y].
\end{equation}
We endow the fiber $\tpi^{-1}([\phi])$ with the topology and orientation which make $i_\phi$ and orientation preserving homeomorphism and note that the selected orientation and topology is is independent of the choice of $\phi \in [\phi]$. The map $i_\phi$ can also be used to push the basepoint $x_0\in \Delta$ onto the planar surface. We treat $[\phi,x_0]$ as the basepoint of the planar surface, and note that this point is also independent of the choice of $\phi$. Finally, we note that the restriction of the developing map to the fiber is an orientation preserving local homeomorphism to the plane which sends the basepoint $[\phi,x_0]$ to $\0$. This endows the fiber $\tpi^{-1}([\phi])$
with the structure of a translation surface (using an atlas consisting of only the developing map restricted to the fiber). The translation structure on the fiber $\tpi^{-1}([\phi])$ with basepoint $[\phi, x_0]$ is isomorphic to those in the pointed translation structures in the equivalence class $[\phi]$. 

These ideas lead to the following result:
\begin{proposition}
\label{prop:unique planar surface}
Any translation structure on an oriented open $2$-dimensional topological disk with basepoint is isomorphic to a unique planar surface.
\end{proposition}

\begin{notational_convention}\label{conv}
We follow some conventions to simplify notation by effectively removing the need to discuss equivalence classes. Formally, the planar surfaces are parametrized by 
equivalence classes $[\phi] \in \tM$ via $P=\tpi^{-1}([\phi])$. We will identify the objects $P$ and $[\phi]$, and thus we can more simply write $P \in \tM$. We will typically denote the basepoint of $P$ by
$o_P$; as noted above $o_P=[\phi,x_0]$. We will denote points of $P$ by letters
such as $p,q \in P$. Note that points of $P$ are also points of $\tE$. But, we will rarely refer to a point $p \in \tE$ without referring 
to the planar surface $P=\tpi(p) \in \tM$ which contains $p$. Therefore, we will redundantly refer to points of $\tE$ as pairs $(P,p)$ where $P$ is a planar surface and $p \in P$ is a point in this surface (i.e., $P=\tpi(p)$).
\end{notational_convention}

\section{Immersions and a topology on the moduli space} 
\label{sect:topology disk 1}

\subsection{Definition of immersion}
\label{sect:def immersion}
Let $P$ be a planar surface, following the discussion in 
\S \ref{sect:planar} including Convention \ref{conv}.
We define $\PC(P)$ to be the collection of all path-connected subsets of $P$ which contain the basepoint $o_P \in P$. 

Let $P$ and $Q$ be planar surfaces, and choose $A \in \PC(P)$ and $B \in \PC(Q)$. We say ``$A$ {\em immerses} into $B$'' and write ``$A \imm B$'' if there is a continuous map
$\iota:A \to B$ which acts as a translation in local coordinates so that $\iota(o_P)=o_Q$. 
We call the map $\iota$ an {\em immersion (respecting the translation structures)}.
We will write
``$A \not \imm B$'' to indicate that $A$ does not immerse
in $B$, and will write
``$\exists \iota:A \imm B$'' as shorthand for the phase ``there exists an immersion $\iota$ from $A$ to $B$.'' 

\begin{example}
If one takes a finite genus translation surface with singularities removed, the developing map applied to the universal cover is an immersion of this cover into the plane.
\end{example}

An {\em embedding} is an injective immersion. If such a map exists between an 
$A \in \PC(P)$ and $B \in \PC(Q)$, we say ``$A$ embeds in $B$'' and write ``$A \emb B$.'' We follow notational
conventions as for immersions.

We make some basic observations about immersions:
\begin{proposition}[Uniqueness of immersions]
\label{prop:unique}
For $A \in \PC(P)$ and $B \in \PC(Q)$, there is at most one immersion from $A$ into $B$.
\end{proposition}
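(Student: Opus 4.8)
The plan is to exploit the fact that the developing maps $\dev|_P$ and $\dev|_Q$ are local homeomorphisms, together with the connectedness of $A$, to promote the local rigidity of an immersion into a global uniqueness statement. First I would observe that equation \eqref{eq:emb} says precisely that any immersion $\iota:A\to B$ is a lift of $\dev|_P:A\to\R^2$ through the local homeomorphism $\dev|_Q:B\to\R^2$, normalized by the condition $\iota(o_P)=o_Q$. So suppose $\iota_1,\iota_2:A\to B$ are two immersions. I want to show $\iota_1=\iota_2$ on all of $A$.

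The key step is a connectedness argument. Let $S=\{p\in A:\iota_1(p)=\iota_2(p)\}$. This set is nonempty since $o_P\in S$. I would show $S$ is both open and closed in $A$; since $A$ is path-connected (hence connected), this forces $S=A$. For closedness: if $p_n\to p$ in $A$ with $\iota_1(p_n)=\iota_2(p_n)$, then $p\in A$, and I need $\iota_1(p)=\iota_2(p)$; this is where local injectivity of $\dev|_Q$ near the common value is used — take a neighborhood $W$ of $\dev|_Q(\iota_1(p))=\dev|_P(p)$ on which $\dev|_Q$ restricted to a suitable neighborhood of $\iota_1(p)$ (resp. $\iota_2(p)$) is a homeomorphism onto $W$; continuity of $\iota_1,\iota_2$ and the agreement on the sequence pin down $\iota_1(p)=\iota_2(p)$. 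For openness: given $p\in S$ with $q:=\iota_1(p)=\iota_2(p)$, choose an open neighborhood $V\subset B$ of $q$ on which $\dev|_Q$ is a homeomorphism onto an open set $\dev|_Q(V)\subset\R^2$; by continuity there is a neighborhood $U$ of $p$ in $A$ with $\iota_1(U),\iota_2(U)\subset V$; then on $U$ both $\dev|_Q\circ\iota_1$ and $\dev|_Q\circ\iota_2$ equal $\dev|_P$ by \eqref{eq:emb}, and since $\dev|_Q|_V$ is injective we get $\iota_1=\iota_2$ on $U$, so $U\subset S$.

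The one point I would be careful about is the closedness step, since $A$ and $B$ need not be open subsets of $P$ and $Q$ and the limit point $p$ of the sequence lies in $A$ by hypothesis but the convergence $\iota_j(p_n)\to\iota_j(p)$ in $B$ uses only continuity of $\iota_j$ as a map into $B$ with the subspace topology; the local homeomorphism property of $\dev|_Q$ is a statement about $Q$, but a neighborhood of $\iota_j(p)$ in $Q$ intersected with $B$ still serves to separate points, so the argument goes through. I expect this gluing of the local-injectivity information along the connected set $A$ to be the only real content; everything else is bookkeeping with the definitions from \S\ref{sect:def immersion}. An equivalent and perhaps cleaner phrasing of the openness step: locally, $\iota_j = (\dev|_Q|_V)^{-1}\circ\dev|_P$, which is a formula independent of $j$, so the two immersions agree on a neighborhood of every point where they agree.
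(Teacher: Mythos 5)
Your proof is correct and is essentially the paper's argument: the paper's one-line appeal to ``analytic continuation'' through the local homeomorphisms $\dev|_P$ and $\dev|_Q$ is exactly the unique-lifting principle you make precise via the open-and-closed agreement set (with connectedness of the path-connected set $A$ and the Hausdorffness of the surface $Q$ handling the closedness step). Nothing further is needed.
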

\begin{proof}
Suppose $\iota_1$ and $\iota_2$ are immersions from $A$ to $B$. We will show that $\iota_1(a)=\iota_2(a)$ for all $a \in A$. The set of $a$ satisfying this equation is open (because both $\iota_1$ and $\iota_2$ are local translations)
and closed (because of continuity of the maps). Also it holds at $o_P \in A$ by definition of immersion.
Since $A$ is path connected, it must hold in all of $A$. 
\end{proof}

An {\em isomorphism} between $A \in \PC(P)$ and $B \in \PC(Q)$ is an immersion of $A$ into $B$
which is also a homeomorphism. Compositions and inverses of isomorphisms are still isomorphisms.
This implies that the notion of isomorphism gives an equivalence relation on $\bigcup_{P \in \tM} \PC(P)$
and we use $\PC$ to denote the collection of such isomorphism classes.
Observe that if $A$ and $A'$ are isomorphic and $B$ and $B'$ are isomorphic then $A \imm B$
if and only if $A' \imm B'$. Therefore, the notion of immerses gives an relation
on the collection $\PC$ of isomorphism classes of elements of $\bigcup_{P \in \tM} \PC(P)$. 

\begin{remark}
We will abuse notation by typically ignoring the distinction between an $A \in \PC(P)$ and
its equivalence class in $\PC$. If $A \in \PC$, then a point in $A$ would formally be an
isomorphism class of pairs $(A,a)$ much as in our definition of $\tE$. We are not interested
in giving more structure to $\PC$, so we will ignore this issue. 
\end{remark}

\begin{corollary}[Partial ordering]
\label{cor:partial ordering}
The notions of immerses ($\imm$) and embeds ($\emb$) viewed as relations on $\PC$ are partial orderings.
Both notions restrict to partial orderings on $\tM$. 
\end{corollary}

\begin{proof}
Let $A,B,C \in \PC$. 
The identity map is an embedding $A \emb A$, so $\imm$ and $\emb$ are reflective.
If $A \imm B$ and $B \imm C$, then the composition of immersions gives an immersion $A \imm C$, so $\imm$ is transitive. Furthermore, if both maps are injective then so is the composition, so $\emb$ is transitive.
We must show that $A \imm B$ and $B \imm A$ implies $A$ and $B$ are isomorphic. 
Let $\iota_1:A \imm B$ and $\iota_2:B \imm A$. Then
$\iota_2 \circ \iota_1$ is an immersion of $A$ into itself.
Since the identity map on $A$ is also an immersion of $A$ into itself, by the uniqueness of immersions, $\iota_2 \circ \iota_1=\id_A$. Similarly, $\iota_1 \circ \iota_2=\id_B$. The immersions are inverses of one another, and hence are homeomorphisms. Furthermore, the fact that the maps respect the basepoints implies that they are isomorphisms. This also holds for embeddings since embeddings are a special case of immersions.
\end{proof}

\begin{proposition}
\label{prop:dev and immersions}
Let $P$ and $Q$ be planar surfaces and let $A \in \PC(P)$ and $B \in \PC(P)$. A continuous function $f:A \to B$
satisfying $f(o_P)=o_Q$ is an immersion if and only if $\dev(P,a)=\dev\big(Q,f(a)\big)$ for all $a \in A$.
\end{proposition}
As a consequence of this, we see that the developing map is well defined when restricted to an $A \in \PC$.
That is, if $A \in \PC(P)$ and $B \in \PC(Q)$ are isomorphic then the isomorphism does not affect the image under the developing map.
\begin{proof}
Clearly if $\dev(P,a)=\dev\big(Q,f(a)\big)$ holds for all $a \in A$, then $f$ is a local translation
and so is an immersion. Conversely suppose $f$ is an immersion. 
Note that the developing map of a planar surface $\dev|_P:P \to \R^2$ is an immersion if $\R^2$ is considered to be a planar surface with basepoint $\0$. Since compositions of immersions are immersions,
we know $\dev \circ f$ is an immersion of $A$ into $\R^2$. Therefore, this map agrees with the restriction of $\dev$ to $A$. 
\end{proof}

\subsection{The topology on moduli space}
\label{sect:topology}
We will specify the topology on the moduli space of all planar surfaces, $\tM$, by specifying a subbasis for the topology. That is, we will be concerned with the coarsest topology which makes
a collection of sets open. 

Let $P\in \tM$ be a planar surface and let $K \in \PC(P)$ be a compact subset of $P$. We define the following subsets of the moduli space $\tM$:
\begin{equation}
\label{eq:n+}
\tM_{\imm}(K)=\{Q \in \tM~:~K \imm Q\}
\and
\tM_{\emb}(K)=\{Q \in \tM~:~K \emb Q\}.
\end{equation}
Sets of this form will be open in our topology.
However, they are insufficient to form a subbasis for a Hausdorff topology, because they fail to isolate points. For instance, the plane (interpreted as a planar surface) lies in each set $\tM_{\imm}(K)$. Also, any set of the form $\tM_{\emb}(K)$ which contains the unit disk also contains the plane. 

Let $P$ be a planar surface, and let $U \in \PC(P)$ be open as a subset of $P$. We define:
\begin{equation}
\label{eq:n-}
\tM_{\not \imm}(U)=\{Q \in \tM~:~U \not \imm Q\} \and \tM_{\not \emb}(U)=\{Q \in \tM~:~U \not \emb Q\}.
\end{equation}
These sets will also be open in our topology.

We would like to describe a subbasis for our topology which
consists of sets which are fairly easy to work with. 
So, for any planar surface $P$, we will distinguish two natural subsets of $\PC(P)$.
We define $\cdisk(P)$ to be those sets in $\PC(P)$ which are homeomorphic to a closed
$2$-dimensional disk and contain the basepoint $o_P$ in their interior. We define $\disk(P)$ to be the set of sets in $\PC(P)$ which are homeomorphic to an open $2$-dimensional disk and contain the basepoint.

\begin{definition}
\label{def:immersive topology on M}
The {\em immersive topology} on $\tM$ is the coarsest topology so that sets of either of the two forms below are open:
\begin{enumerate}
\item Sets of the form $\tM_{\imm}(K)$ where $K \in \cdisk(P)$ for some $P \in \tM$. 
\item Sets of the form $\tM_{\not \emb}(U)$ where $U \in \disk(P)$ for some $P \in \tM$.
\end{enumerate}
\end{definition}

It will follow that the other types of sets mentioned above are also open; see \S \ref{sect:rectangular unions}.

\subsection{The topology on the canonical disk bundle}
\label{sect:topology total}
We recall that in Convention \ref{conv} we have identified the canonical disk bundle 
$\tE$ with the collection of pairs $(P,p)$, where $P$ is a planar surface and $p \in P$.

Let $P$ be a planar surface, let $K \in \PC(P)$ be a compact subset of $P$ with interior $K^\circ$. Let $U \subset K^\circ$ be an open set (not necessarily connected or containing the basepoint). Define the following subset of
$\tE$:
\begin{equation}
\label{eq:E+}
\tE_\imm(K,U)= \{(Q,q) \in \tE~:~\text{$\exists \iota:K \imm Q$ and $q \in \iota(U)$}\}.
\end{equation}
The {\em immersive topology on $\tE$} is the coarsest topology so that the projection $\tpi:\tE \to \tM$ is continuous and so that for each $P \in \tM$ and each $K \in \cdisk(P)$ and each open $U \subset K^\circ$, the set $\tE_\imm(K,U)$ is open.

The following results follow quickly from the definition:

\begin{proposition}
\label{prop:same topology}
Let $P$ be a planar surface. Then $P$ inherits a topology by viewing $P$ as a subset of $\tE$ with the immersive topology. This topology is the same as the topology coming from viewing $P$ as homeomorphic to an topological disk as in \S \ref{sect: fibers}.
\end{proposition}
\begin{proof}
Let $\sT$ be the first topology on $P$ mentioned. Then open sets in $\sT$ are of the form $\iota(U)$ where $\iota:K \to P$ is an immersion of a compact set. Since immersions are local homeomorphisms, $\iota(U)$ is always open in the disk topology. On the other hand if $V$ is an open set in the disk topology on $P$ and $v \in V$, then we can find an compact set $K \subset P$ containing $v$ in its interior, and we we can let $U=K^\circ \cap V$. Then the intersection
of $\tE_\imm(K,U)$ with $P$ is precisely $U$. Since this can be done for any $v \in V$, this shows $V$ is open in $\sT$.
\end{proof}

Sets $\tE_\imm(K,U)$ are open even if the condition that $K$ be a closed disk is removed:
\begin{proposition}
For every compact $K$ in some $P$, and every $U \subset K^\circ$, we have $\tE_\imm(K,U)$ is open.
\end{proposition}
\begin{proof}
Let $(Q,q) \in \tE_\imm(K,U)$. Then there is an immersion $\iota:K \imm Q$ and $q \in \iota(U)$.
Choose $D \in \cdisk(Q)$ so that $\iota(K) \subset D^\circ$. Then $\tE_\imm\big(D,\iota(U)\big)$ is open by definition and contains $(Q,q)$. We claim that  $\tE_\imm\big(D,\iota(U)\big) \subset \tE_\imm(K,U)$ which will prove that $\tE_\imm(K,U)$ is open. Let $(R,r) \in \tE_\imm\big(D,\iota(U)\big)$. Then there is an immersion $j:D \imm R$ and $r \in j \circ \iota(U)$. 
By composition we have an immersion $j \circ \iota:K \imm R$ and since $r \in j \circ \iota(U)$ we know 
$(R,r) \in \tE_\imm(K,U)$.
\end{proof}
\section{The Hausdorff property}
\label{sect:basic}
We prove the following Lemma:

\begin{lemma}
The immersive topologies on $\tM$ and $\tE$ are Hausdorff topologies.
\end{lemma}

\subsection{Basic properties of immersions}
\label{sect:basic immersions}
We collect some basic properties of immersions and embeddings:
\begin{proposition}
\label{prop:supremum}
Let $P$ and $Q$ be planar surfaces and $B \in \PC(Q)$. Suppose that $\langle A_j \in \PC(P) \rangle_{j \in \N}$ 
is an increasing sequence of open subsets, i.e. $A_j \subset A_{j+1}$ for all $j \in \N$. 
Let $U= \bigcup_{j \in \N} A_j \in \PC(P)$. Then:
\begin{itemize}
\item If $A_j \imm B$ for all $j \in \N$, then $U \imm B$. 
\item If $A_j \emb B$ for all $j \in \N$, then $U \emb B$. 
\end{itemize}
\end{proposition}
\begin{proof}
Suppose $\exists \iota_j:A_j \imm B$ for all $j \in \N$. The fact that these immersions are unique implies that for all $j<k$ and all $p \in A_j$, we have $\iota_j(p)=\iota_k(p)$,
since $\iota_j=\iota_k|_{A_j}$. Therefore, we may define a limiting map $\iota:U \to B$ by $\iota(p)=\iota_j(p)$ whenever $p \in A_j$. The preceding argument indicates that this map is well defined. We must check that it is an immersion. 
Because the $A_j$ are open, continuity of each $\iota_j$
implies continuity of $\iota$.
Similarly, $\iota$ acts as a translation in local coordinates by restricting locally within some $A_j$. 
The embedding case also requires checking injectivity. To prove this from the fact that each $\iota_j$ is injective, choose any distinct $p,q \in U$. Then $p,q \in \iota_j(A_j)$ for some $j$. Then by definition of $\iota$ and injectivity of $\iota_j$,
$$\iota(p)=\iota_j(p) \neq \iota_j(q)=\iota(q).$$
\end{proof}

\subsection{Subsets homeomorphic to disks}
\label{sect:balls}
In order to work with disks in planar surfaces, we will utilize some structure coming from Schoenflies' theorem:

\begin{theorem}[Schoenflies]
Let $C$ be a simple closed curve in the open topological disk $\Delta$. Then, there is a homeomorphism $h:\Delta \to \R^2$ so that $h(C)$ is the unit circle in $\R^2$. 
\end{theorem}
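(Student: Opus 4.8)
The plan is to reduce the statement to the classical Jordan--Schoenflies theorem, which I would cite rather than reprove; essentially all the difficulty lives in that citation, and the rest is point-set bookkeeping needed to land in $\R^2$ (rather than in a bounded disk) and to carry $C$ onto the unit circle exactly. The convenient form of Schoenflies to invoke is the one on the sphere: if $C_1,C_2\subset S^2$ are simple closed curves, then every homeomorphism $C_1\to C_2$ extends to a homeomorphism $S^2\to S^2$ (see e.g.\ Moise, \emph{Geometric Topology in Dimensions 2 and 3}; the plane version and the sphere version are interchangeable via one-point compactification).

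Here are the steps in order. First, since $\Delta$ is an open topological $2$-disk it is homeomorphic to $\R^2$; fixing such a homeomorphism, we may assume $\Delta=\R^2$ and $C\subset\R^2$, so it suffices to produce a homeomorphism $\R^2\to\R^2$ carrying $C$ to the unit circle $S^1$. Second, view $\R^2=S^2\setminus\{\infty\}$, so that $C$ is a simple closed curve in $S^2$ avoiding $\infty$; pick any homeomorphism $\sigma:C\to S^1$ and extend it, by the Schoenflies theorem on $S^2$, to a homeomorphism $\Sigma:S^2\to S^2$ with $\Sigma(C)=S^1$. Third, by the Jordan curve theorem $C$ separates $S^2$ into two open disks, one of which, call it $V$, contains $\infty$; since $\Sigma$ maps components of $S^2\setminus C$ homeomorphically onto components of $S^2\setminus S^1$, the set $\Sigma(V)$ is one of the two complementary components of $S^1$. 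If $\Sigma(V)$ is the bounded component $\{\,|z|<1\,\}$, replace $\Sigma$ by $\rho\circ\Sigma$ where $\rho(z)=1/\bar z$ (extended by $0\leftrightarrow\infty$): $\rho$ fixes $S^1$ pointwise and interchanges the two complementary components, so after this adjustment we may assume $\Sigma(\infty)\in\{\,|z|>1\,\}\cup\{\infty\}$. Fourth, using that the group of self-homeomorphisms of a closed $2$-disk fixing the boundary pointwise acts transitively on the interior, choose a homeomorphism of the closed disk $\{\,|z|\ge 1\,\}\cup\{\infty\}$ that fixes $S^1$ pointwise and sends $\Sigma(\infty)$ to $\infty$, and extend it by the identity on $\{\,|z|\le 1\,\}$ to obtain $\Theta:S^2\to S^2$ (continuity on $S^1$ and the homeomorphism property follow from the pasting lemma). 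Then $\Theta\circ\Sigma$ is a self-homeomorphism of $S^2$ fixing $\infty$ and carrying $C$ to $S^1$, so its restriction $h:=(\Theta\circ\Sigma)|_{\R^2}:\R^2\to\R^2$ is the required homeomorphism; composing with the initial identification $\Delta\cong\R^2$ completes the proof.

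The genuinely hard step is the Schoenflies theorem on $S^2$ itself, which I would not reprove; everything else above uses only the Jordan curve theorem, invariance of domain, and homogeneity of the open disk. If a self-contained argument were wanted, the standard options are the topological proof via polygonal approximation of $C$, or an analytic route: the Jordan curve theorem produces a bounded Jordan domain $D$ with $\partial D=C$; Carath\'eodory's theorem extends the Riemann maps of $D$ and of the complementary Jordan domain (in $S^2$) to homeomorphisms of the closed disks; one then reconciles the two boundary parametrizations of $C$ using the Alexander trick and glues the two maps along $C$ via the pasting lemma on the closed complementary regions. In either case, the only feature specific to the present formulation --- making the homeomorphism land in $\R^2$ with image of $C$ precisely the unit circle --- is handled by the normalization at $\infty$ described above.
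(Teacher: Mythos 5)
Your argument is correct: the paper simply quotes Schoenflies as a classical result with no proof, and your reduction to the sphere version (extension of a homeomorphism $C\to S^1$ to $S^2$, followed by the inversion adjustment and the push of $\Sigma(\infty)$ back to $\infty$ inside the outer closed disk) is a standard and sound way to obtain exactly the stated form $h:\Delta\to\R^2$ with $h(C)=S^1$. Since all the genuine difficulty is delegated to the same classical citation the paper relies on, your proposal matches the paper's treatment, with the normalization at infinity carefully and correctly filled in.
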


We translate this theorem into our setting as follows. 

\begin{corollary}
\label{cor:Schoenflies}
Let $P$ be a planar surface and $K \in \cdisk(P)$. 
Then, there is a homeomorphism $h:P \to \R^2$ so that $h(K)$ is the closed unit ball
and $h(o_P)=\0$.
\end{corollary}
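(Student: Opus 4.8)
The plan is to deduce this corollary from Schoenflies' theorem by first reducing the planar surface $P$ to the model disk $\Delta$. Recall from Convention \ref{conv} and equation \eqref{eq:i phi} that a choice of representative $\phi \in [\phi]$ yields a homeomorphism $i_\phi : \Delta \to P$ carrying $x_0$ to $o_P$. Pulling $K$ back via $i_\phi$, we obtain $C = i_\phi^{-1}(\partial K)$, which is a simple closed curve in $\Delta$ (since $K$ is homeomorphic to a closed disk, its boundary $\partial K$ is a simple closed curve in $P$, and $i_\phi$ is a homeomorphism). By Schoenflies' theorem there is a homeomorphism $g : \Delta \to \R^2$ with $g(C)$ the unit circle; then $g \circ i_\phi^{-1} : P \to \R^2$ carries $\partial K$ to the unit circle. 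The composition $h_0 = g \circ i_\phi^{-1}$ already sends $K$ to one of the two components of the complement of the unit circle together with the circle itself — that is, either to the closed unit ball or to the closed exterior (including $\infty$, which cannot happen since $K$ is compact and maps to a compact set). Since $K$ contains $o_P$ in its interior, $h_0(o_P)$ lies in the open region bounded by the unit circle that is the image of $K^\circ$; as $K$ is compact, $h_0(K)$ is the closed unit ball.

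The remaining task is to arrange that the basepoint goes to the origin. We have a point $v = h_0(o_P)$ in the open unit ball, and we want to postcompose $h_0$ with a homeomorphism $\R^2 \to \R^2$ that fixes the unit circle (indeed fixes the closed unit ball setwise) and carries $v$ to $\0$. This is routine: for instance, one can take a homeomorphism of the closed unit ball to itself fixing the boundary circle pointwise and sending $v$ to $\0$ (the closed ball is homogeneous rel boundary — radially interpolate between the identity on the boundary and the translation by $-v$ near the center), and extend it by the identity outside the ball. Call this homeomorphism $r$; then $h = r \circ h_0 : P \to \R^2$ is a homeomorphism with $h(K)$ the closed unit ball and $h(o_P) = \0$, as required.

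I do not expect any serious obstacle here; the statement is essentially a translation of Schoenflies into the language of planar surfaces plus a cosmetic normalization of the basepoint. The only point requiring a little care is the identification of which complementary component of the unit circle receives $K$, which is settled by compactness of $K$ (ruling out the unbounded component), together with the observation that $o_P \in K^\circ$ forces the image of the interior to be the bounded component. Everything else — that $i_\phi$ is a homeomorphism, that boundaries of disks are simple closed curves, and that the closed disk is homogeneous relative to its boundary — is standard point-set topology.
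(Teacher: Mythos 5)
Your proof is correct and follows the same route the paper intends: the paper states this corollary without proof, as a direct translation of Schoenflies obtained by identifying $P$ with $\Delta$ via the homeomorphism $i_\phi$, applying Schoenflies to the simple closed curve $i_\phi^{-1}(\partial K)$, and normalizing the basepoint to $\0$. The details you supply --- that $h_0(K^\circ)$ must fill the bounded complementary component of the unit circle (compactness ruling out the unbounded one) and that the closed ball is homogeneous rel boundary --- are exactly the routine checks the paper elides.
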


We use this to impart the following structure.

\begin{proposition}
\label{prop:closed disk family}
Let $P$ be a planar surface. For each set $K \in \cdisk(P)$, there is a family of sets $\{K_t \in \cdisk(P)~:~t>0\}$ so that the following statements hold.
\begin{enumerate}
\item $K_1=K$.
\item $\bigcap_{t} K_t=\{o_P\}.$ 
\item $P=\bigcup_{t} K_t^\circ$.
\item For each $t>0$,  $K_t^\circ=\bigcup_{t'<t} K_{t'}^\circ$.
\label{item:intersection}
\item For each $t>0$, $K_t=\bigcap_{t'>t} K_{t'}$.
\label{item:union}
\item There is a continuous surjective function $\alpha:\R/2\pi\Z \times [0, \infty) \to P$, which is injective except that $\alpha(\R/2 \pi \Z \times \{0\})=\{o_P\}$ and satisfies $\alpha(\R/2 \pi \Z \times \{t\})=\partial K_t$.
\label{item:polar}
\end{enumerate}
\end{proposition}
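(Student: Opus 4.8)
The plan is to reduce to the model situation $P=\R^2$, $o_P=\0$, $K=\overline{B}(\0,1)$ (the closed Euclidean unit ball) by means of Corollary \ref{cor:Shoenflies}, to solve the problem there with honest Euclidean balls, and then to transport the answer back along the Schoenflies homeomorphism.

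First I would invoke Corollary \ref{cor:Shoenflies} to fix a homeomorphism $h:P\to\R^2$ with $h(o_P)=\0$ and $h(K)$ equal to the closed unit ball. In $\R^2$ the candidate family is the obvious one: for $t>0$ put $B_t=\overline{B}(\0,t)$, and let $\beta:\R/2\pi\Z\times[0,\infty)\to\R^2$ be the polar parametrization $(\theta,t)\mapsto(t\cos\theta,t\sin\theta)$. Each of the seven assertions is then a routine statement about Euclidean balls: $B_1$ is the unit ball; $\bigcap_{t>0}B_t=\{\0\}$; $\bigcup_{t>0}B_t^\circ=\R^2$; $B_t^\circ=B(\0,t)=\bigcup_{t'<t}B_{t'}^\circ$ because a vector of norm less than $t$ has norm less than $t'$ for some $t'<t$; $B_t=\bigcap_{t'>t}B_{t'}$ because a vector of norm at most $t'$ for every $t'>t$ has norm at most $t$; and $\beta$ is a continuous surjection, injective on $\R/2\pi\Z\times(0,\infty)$, which collapses $\R/2\pi\Z\times\{0\}$ to $\0$ and carries each circle $\R/2\pi\Z\times\{t\}$ onto $\partial B_t$.

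Next I would set $K_t=h^{-1}(B_t)$ for $t>0$ and $\alpha=h^{-1}\circ\beta$. Since $h$ is a homeomorphism, each $K_t$ is homeomorphic to a closed disk, and $h^{-1}$ carries interiors to interiors and topological boundaries to topological boundaries; in particular $K_t^\circ=h^{-1}(B_t^\circ)$ contains $h^{-1}(\0)=o_P$, so $K_t\in\cdisk(P)$. Being a bijection, $h^{-1}$ also commutes with arbitrary unions, intersections and complements. Hence properties (1)--(6) for $\{K_t\}$ and $\alpha$ follow by applying $h^{-1}$ to the statements already verified for $\{B_t\}$ and $\beta$; for instance property (1) is $K_1=h^{-1}(h(K))=K$, and in property (6) one uses $h^{-1}(\partial B_t)=\partial K_t$.

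I do not expect a genuine obstacle here. The two points that need a word of care are: (i) that each $K_t$ really lies in $\cdisk(P)$, which is exactly where the hypothesis $t>0$ is used (so that $\0$ is an \emph{interior} point of $B_t$); and (ii) that the set-theoretic and topological identities may be transported through $h^{-1}$, which is valid precisely because $h$ is a homeomorphism and not merely a continuous bijection.
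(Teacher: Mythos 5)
Your proposal is correct and is essentially the paper's own proof: pull back the family of closed Euclidean balls $\{\|\v\|\le t\}$ and the polar-coordinate parametrization through the Schoenflies homeomorphism $h:P\to\R^2$ of Corollary \ref{cor:Shoenflies}, checking the listed properties for the model family in $\R^2$ and transporting them via $h^{-1}$. The paper states this in two sentences; your write-up just makes the routine verifications explicit.
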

\begin{proof}
Let $h:P \to \R^2$ be the homeomorphism guaranteed to exist by Corollary \ref{cor:Schoenflies}. The family given by 
$K_t=h^{-1} \big(\set{\v \in \R^2}{$\|\v\| \leq t$}\big)$ satisfies the proposition. The function $\alpha$ can be taken to be the pull back of polar coordinates on $\R^2$.
\end{proof}

We will call any family of sets $\{K_t \in \cdisk(P)~:~t>0\}$ formed as above a {\em closed disk family} in the planar surface $P$. Closed disk families give a natural way to understand immersions and the failure to immerse, and we use them throughout the paper. The next subsection uses them to prove our convergence criteria.

\subsection{Proofs of the Hausdorff property}

\begin{proof}[Proof that the immersive topology on $\tM$ is Hausdorff]
Let $P, Q \in \tM$ be distinct planar surfaces.
Then by Corollary \ref{cor:partial ordering}, either $P \not \imm Q$ or $Q \not \imm P$. Without loss of generality, assume $P \not \imm Q$. Let 
$\{K_t\}$ be a closed disk family for $P$. If each $K_t^\circ \imm Q$, then $P \imm Q$ by Proposition \ref{prop:supremum}.
Therefore, there is a $t$ so that $K_t^\circ \not \imm Q$. We conclude that $P \in \tM_{\imm}(K_t)$ and $Q \in \tM_{\not \imm}(K_t^\circ)$.
These open sets are disjoint since if $K_t \imm R$
then restriction gives an immersion $K_t^\circ \imm R$.\end{proof}

\begin{proof}[Proof that the immersive topology on $\tE$ is Hausdorff]
Suppose $(P,p)$ and $(Q,q)$ are distinct points in $\tE$. We will find open sets separating 
these points. If $P$ and $Q$ are distinct planar surfaces, then we can use the Hausdorff property of the embedding topology on $\tM$ to separate $P$ and $Q$ by open sets. The preimages of these open sets under $\tpi$ are open in $\tE$ and separate our points. 

Otherwise, we have $P=Q$, and $q \in P$. In this case, the points can be separated since the induced
topology on the planar surface $P$ is the same as the topology of an open topological disk; see Proposition \ref{prop:same topology}.
\end{proof}

\section{Fusing planar surfaces} 
\label{sect:fusion}

By Corollary \ref{cor:partial ordering} the notion of immersion places a partial ordering on the space $\tM$ of planar surfaces. We will now describe some of the order structure. If $P \imm Q$ we think of $Q$ as larger than $P$. The content of the following is that every collection of planar surfaces has a least upper bound:

\begin{theorem}[Fusion Theorem]
\label{thm:main fusion}
Let $\sP$ denote any non-empty collection of planar surfaces. Then there is a unique planar surface $R$ which satisfies the following statements:
\begin{enumerate}
\item[(I)] For each $P \in \sP$, $P \imm R$.
\item[(II)] For all planar surfaces $Q$, if 
$P \imm Q$ for all $P \in \sP$, then $R \imm Q$.
\end{enumerate}
\end{theorem}
We say that the planar surface $R$ from the above theorem is the {\em fusion} of
$\sP$ and write $R=\Fuse \sP$. If $\sP$ is a finite collection,
such as $\sP=\{P_1,P_2,P_3\}$, we write
$P_1 \fuse P_2 \fuse P_3$ for the fusion.
See Figure \ref{fig:fuse} for an example of this operation.

\begin{figure}
\includegraphics[width=3in]{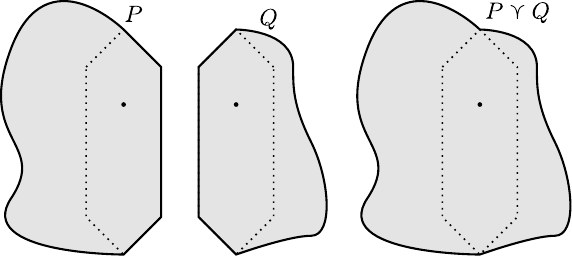}
\caption{An example of fusing two planar surfaces.}
\label{fig:fuse}
\end{figure}

We will now look at greatest lower bounds. Unfortunately, there is no minimal element in $\tM$. 
We can fix this by adding a new element; define $\bar \sM=\tM \cup \{O\}$
and extend the partial order $\imm$ by saying $O \imm P$ for all $P \in \bar \sM$ and $P \imm O$ if and only if $P=O$.
Now $O$ is the minimal element of $\bar \sM$. All subcollections of $\bar \sM$ have greatest lower bounds:

\begin{corollary}[Core Corollary]
\label{cor:core}
Let $\sP \subset \bar \sM$ be non-empty. Then, there is a unique $R \in \bar \sM$ which satisfies the following statements:
\begin{itemize}
\item[(I')] For each $P \in \sP$, $R \imm P$. 
\item[(II')] If $Q \in \bar \sM$ and $Q \imm P$ for all $P \in \sP$, then $Q \imm R$. 
\end{itemize}
\end{corollary}
The proof is a standard observation in order theory, but we give it for completeness. This and the result above indicate that $\bar \sM$ is a complete lattice in the sense of order theory. 
See \cite{Birkhoff64} or \cite{Gratzer11}
for background on complete lattices.
\begin{proof}
First we address uniqueness. Suppose $R_1$ and $R_2$ satisfy both statements.
Then by (I'), $R_1 \imm P$ and $R_2 \imm P$ for all $P \in \sP$. Then by (II'),
$R_1 \imm R_2$ and $R_2 \imm R_1$. We conclude that $R_1=R_2$ since $\imm$ is a partial order.

Consider the collection $\sS=\{S \in \tM~:~\text{$S \imm P$ for all $P \in \sP$}\}.$ If this collection is empty,
then we can take $R=O$. The statements (I') and (II') are clearly satisfied.

If $\sS \neq \emptyset$, then let $R=\Fuse \sS$. We will now prove that (I') is satisfied.
Fix $P \in \sP$. Observe that $S \imm P$ for all $S \in \sS$. Therefore $R \imm P$ by statement (II) of the Fusion theorem. We now prove that (II') is satisfied. Suppose $Q \in \bar \sM$ and suppose $Q \imm P$ for every $P \in \sP$. 
Then $Q \in \sS$. So $Q \imm R$ by statement (I) of the Fusion Theorem.
\end{proof}
We call the $R \in \bar \sM$ produced in the above corollary the {\em core} of the collection $\sP$,
and denote this by $R=\Core \sP$. 


\subsection{Trivial structures}
\label{sect:trivial structures}
Let $\Sigma$ be a connected oriented topological surface with basepoint $x_0$. We will say that a {\em trivial structure} on a surface is an atlas of orientation preserving local homeomorphisms (charts) to the plane so that the transition functions are restrictions of the identity map on the plane and so that the image of the basepoint $x_0$ is always mapped to $\0 \in \R^2$. We emphasize that in a trivial structure, the underlying topological surface $\Sigma$ need not be a topological disk.

We give some examples of trivial surfaces to orient the reader. Any connected open subset of $\R^2$ containing the origin $\0$ is a trivial surface, where it suffices to take the identity map as the sole chart in the atlas. (See Figure \ref{fig:trivial}.)
Considering the developing map as the sole chart for a planar surface yields a trivial structure on any planar surface. A trivial structure is a special type of translation structure, i.e., for every translation structure there is at most one corresponding trivial structure. Furthermore, a translation structure on a surface $S$ admits a compatible trivial structure if and only if the holonomy map $\pi_1(S) \to \R^2$ has trivial image. So, in particular, given any translation structure on a surface, there is a compatible trivial structure on its universal abelian cover.

\begin{figure}
\includegraphics[width=5in]{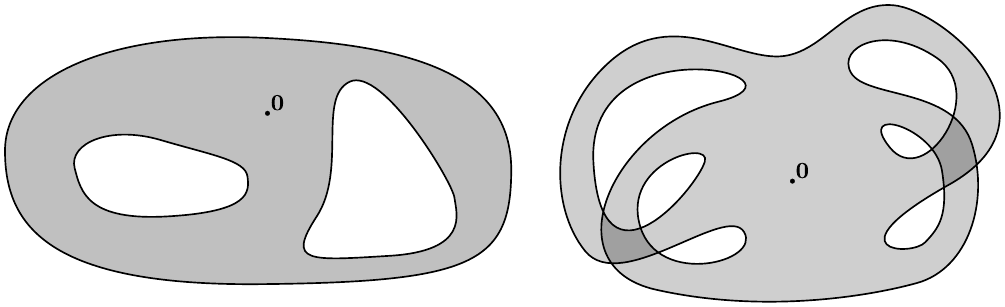}
\caption{Trivial structures on the thrice punctured sphere and once punctured genus two surface.}
\label{fig:trivial}
\end{figure}

We will see that trivial structures generalize but share a lot of properties of translation structures on disks.
Fix a translation structure on a pointed topological surface $\Sigma$. 
Because the transition functions are the identity map, the image of a point $y \in \Sigma$ under a chart is independent of the choice of the chart. It follows that a trivial structure on a surface can be specified by a single chart, a single orientation preserving local homeomorphism $\phi:\Sigma \to \R^2$ so that $\phi(x_0)=\0$ (which generalizes the developing map for a translation structure on a disks). Two such local homeomorphisms $\phi$ and $\psi$ are said to yield {\em isomorphic trivial structures} if there is an orientation preserving homeomorphism $h:\Sigma \to \Sigma$ so that $h(x_0)=x_0$ and $\phi \circ h^{-1}=\psi$. The homeomorphism $h$ is called an {\em isomorphism} between the structures. The {\em (set-theoretic) moduli space of trivial structures on $(\Sigma,x_0)$} is the 
collection isomorphism-equivalence classes of trivial structures on $(\Sigma,x_0)$.
We can construct a canonical $\Sigma$-bundle over the moduli space of trivial structures on $(\Sigma,x_0)$,
as for translation structures on the disk. We say that a { \em trivial surface (homeomorphic to $(\Sigma,x_0))$} is a fiber of the projection from the bundle to the moduli space. We endow the trivial surfaces with a pointed trivial structure in the canonical way. This choice makes the trivial structure on the fiber an element of the isomorphism class described by the image in moduli space.

The collection of all trivial surfaces is the collection of fibers of such projections taken over all
homeomorphism-equivalence classes of pointed surfaces $(\Sigma,x_0)$. The developing map $\dev$ is well defined
on the union of all trivial surfaces, and the restriction to a single trivial surface is a local homeomorphism to $\R^2$ which carries the basepoint to $\0$. 

The notions of immersion and embedding carry over trivially to subsets of trivial surfaces.
In particular, we note that if $P$ and $Q$ are trivial surfaces, then $P \imm Q$ and $Q \imm P$ implies that $P=Q$. 

\subsection{The Fusion Theorem}
\label{sect:fusion theorem}
The main goal of this section is to prove the following theorem.

\begin{theorem}[Generalized Fusion Theorem]
\label{thm:generalized fusion}
Let $\sP$ denote any non-empty collection of trivial surfaces. Then there is a unique trivial surface $R$ which satisfies the following statements:
\begin{enumerate}
\item[(I)] For each $P \in \sP$, $P \imm R$.
\item[(II)] For all planar surfaces $Q$, if 
$P \imm Q$ for all $P \in \sP$, then $R \imm Q$.
\end{enumerate}
\end{theorem}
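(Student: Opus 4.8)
The plan is to construct $R$ explicitly as a quotient of the disjoint union of all surfaces in $\sP$, gluing along the regions where they agree, and then verify the universal property. The key structural fact is that each trivial surface $P \in \sP$ is the same as its developing map $\dev_P : P \to \R^2$ carrying $o_P$ to $\0$, and an immersion $A \imm B$ is exactly a map lifting the developing maps and fixing basepoints. So I will work with developing maps throughout.

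First I would form the set $\widehat{R} = \big(\bigsqcup_{P \in \sP} P\big)/\!\sim$, where I declare $p \sim q$ (for $p \in P$, $q \in Q$) if there is a common ``overlap'' identification — precisely, if there exist open connected neighborhoods containing the basepoints and a chain of immersions matching $p$ to $q$. The cleanest way to make this transitive and well-behaved is to define the equivalence via germs of the developing maps along paths from the basepoints: $p \in P$ and $q \in Q$ are identified when there are paths $\gamma_P$ from $o_P$ to $p$ and $\gamma_Q$ from $o_Q$ to $q$ whose developed images in $\R^2$ agree and along which the ``relative'' structure (i.e., which sheets of which surfaces can be analytically continued) coincides. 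I would give $\widehat{R}$ the quotient topology, let $\dev_{\widehat R}$ be the induced map to $\R^2$ (well defined since the developing maps agree under $\sim$), and let $o_{\widehat R}$ be the common image of all the basepoints. One then checks $\widehat{R}$ is a connected surface, $\dev_{\widehat R}$ is an orientation-preserving local homeomorphism, and the inclusions $P \hookrightarrow \widehat R$ are immersions, giving (I). The universal property (II) is then almost formal: given $Q$ with $P \imm Q$ for all $P$, the immersions $\iota_P : P \to Q$ agree on overlaps by uniqueness of immersions (Proposition \ref{prop:unique}), so they glue to a well-defined continuous map $\widehat R \to Q$ respecting developing maps and basepoints, i.e.\ an immersion $\widehat R \imm Q$. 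Uniqueness of $R$ follows from Corollary \ref{cor:equality} applied in the trivial-surface setting (noting $P \imm Q$ and $Q \imm P \Rightarrow P = Q$ holds there, as remarked at the end of \S\ref{sect:trivial structures}).

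\textbf{The main obstacle is showing $\widehat R$ is genuinely a surface} — in particular Hausdorff and second countable — and that the gluing relation $\sim$ is actually an equivalence relation with closed graph. Non-Hausdorff phenomena are exactly the kind of pathology flagged in the Quotient-topology remark: two sheets that agree on an open set but branch apart can create non-separated points. The germ-along-paths formulation is designed precisely to prevent this: if $p \sim q$ via developed paths, any nearby points are identified via the nearby developed paths, so the quotient map is open and locally injective on each $P$, forcing local homeomorphism and local Hausdorffness; global Hausdorffness then comes from the fact that $\dev_{\widehat R}$ separates points that develop differently, while points developing to the same value but not $\sim$-equivalent are separated because the ``branching'' happens along some compact path, hence on a closed condition. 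Second countability follows because $\widehat R$ is covered by the images of the (second countable) surfaces $P$, but one must be slightly careful if $\sP$ is uncountable — here the point is that $\widehat R$ embeds into the space of germs of developing maps over $\R^2$, which is second countable, so $\widehat R$ is automatically second countable regardless of $|\sP|$.

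I would organize the write-up as: (1) set up the germ/path formalism and define $\sim$; (2) prove $\sim$ is an equivalence relation; (3) prove $\widehat R$ with the quotient topology is a connected, oriented, second-countable Hausdorff surface and $\dev_{\widehat R}$ is an orientation-preserving local homeomorphism carrying $o_{\widehat R}$ to $\0$ — so $\widehat R$ is a trivial surface; (4) verify the inclusions are immersions to get (I); (5) use uniqueness of immersions to glue and get (II); (6) conclude uniqueness of $R$. Steps (3) and (2) carry all the real content; the rest is bookkeeping. A lighter alternative, if one wants to avoid building $\widehat R$ from scratch, is to invoke the constructive fusion description promised in \S\ref{sect:constructive fusion} and prove only the universal property abstractly — but since that subsection is downstream, I would present the direct construction here.
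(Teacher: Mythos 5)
Your overall route is the paper's: the fusion is built as a quotient of $\bigsqcup_{P \in \sP} P$ by a gluing relation, one checks the quotient is a trivial surface, the inclusions give (I), uniqueness of immersions glues the maps $\iota_P$ to give (II), and uniqueness of $R$ follows from Corollary \ref{cor:equality}. The difference, and the genuine gap, is in how the equivalence relation is defined and how Hausdorffness is then proved. You identify $p \in P$ with $q \in Q$ when there are paths from $o_P$ to $p$ and from $o_Q$ to $q$ with equal developed images (plus an undefined clause about ``relative structure/sheets coinciding''), and then take the equivalence closure. The paper instead takes the \emph{smallest path-invariant} equivalence relation containing the identification of all basepoints: whenever $r_1 \sim r_2$ (any already-identified pair, not just basepoints) and $\gamma_1,\gamma_2$ are paths starting at $r_1,r_2$ in their respective surfaces with $\dev\circ\gamma_1=\dev\circ\gamma_2$, the endpoints must also be identified. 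This closure is genuinely iterative (see the hierarchy $\sim_n$ in the proof of Corollary \ref{cor:fusion finiteness}): your basepoint-path relation need not be path invariant, because if $p$ and $q$ are linked only through a chain of intermediate points in other surfaces, a continuation path issuing from $p$ and $q$ need not lift to those intermediate surfaces, so its endpoints acquire no chain of basepoint-path identifications. Your extra clause about germs/sheets does not repair this; as written it is not a definition, and if it restricts the relation further it only makes the problem worse.

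This matters precisely at the step you flag as the crux. The paper's Hausdorffness argument (and its Proposition \ref{prop:open}, which gives local injectivity of the developed chart on the quotient) works as follows: if $[r_1]\neq[r_2]$ develop to the same point and their embedded metric balls $B_1,B_2$ have overlapping saturations, pick $b_1\sim b_2$ with $b_i\in B_i$ and propagate the identification along the straight segments from $b_i$ back to $r_i$; path invariance then forces $r_1\sim r_2$, a contradiction. With your relation this propagation is exactly what is unavailable: the chain witnessing $b_1\sim b_2$ passes through surfaces that need not contain lifts of those segments, and your stated reason for separation (``the branching happens along some compact path, hence on a closed condition'') is not an argument. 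So either you must prove that your relation is already path invariant (which is the missing, and in general doubtful, step), or you should define $\sim$ as the paper does, as the smallest path-invariant equivalence relation identifying the basepoints, after which your outline of (I), (II), and uniqueness goes through essentially as in the paper. Your side worry about second countability for uncountable $\sP$ is not where the difficulty lies; the substance is path invariance and the Hausdorff/local-homeomorphism verification built on it.
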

We call $R$ the fusion of the surfaces in $\sP$, and use notation for $R$ as described under the statement of Theorem \ref{thm:main fusion}. 

We note that $\tM$ is by definition the (set-theoretic) moduli space of trivial structures on the pointed disk $(\Delta,x_0)$. As such $\tM$ is a subset of the (set-theoretic) moduli space of trivial structures on all pointed surfaces defined as defined in \S \ref{sect:trivial structures}. Therefore, we can deduce
our original Fusion Theorem (Theorem \ref{thm:main fusion}) from the Generalized Fusion Theorem using the following.
\begin{proposition}
\label{prop:fusion disks}
If $\sP$ is a collection of trivial surfaces homeomorphic to disks, then so is $\Fuse \sP$. 
\end{proposition}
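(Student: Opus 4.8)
The plan rests on one soft topological input: a connected, second countable surface admitting a local homeomorphism to $\R^2$ is homeomorphic to the open disk. Indeed it must be non-compact, since a local homeomorphism is an open map while $\R^2$ is connected and non-compact; and a connected, simply connected, second countable, non-compact surface is homeomorphic to $\R^2 \cong \Delta$. Since every trivial surface carries a developing map, it suffices to prove that $R := P \fuse Q$ is simply connected.

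Grant for the moment that $P$ and $Q$ embed in $R$, say via $\iota_P : P \emb R$ and $\iota_Q : Q \emb R$, and write $\alpha = \iota_P(P)$ and $\beta = \iota_Q(Q)$ for their images; these are open subsets of $R$, each homeomorphic to a disk and each containing the basepoint $o_R$. I claim $R = \alpha \cup \beta$ and that $\alpha \cap \beta$ is path-connected, which finishes the proof: van Kampen's theorem then gives $\pi_1(R) \cong \pi_1(\alpha) \ast_{\pi_1(\alpha \cap \beta)} \pi_1(\beta)$, an amalgamated product of trivial groups over some group, hence trivial. That $R = \alpha \cup \beta$ follows from the minimality clause of the Generalized Fusion Theorem~\ref{thm:generalized fusion}: the open subsurface $\alpha \cup \beta$ receives immersions from both $P$ and $Q$, so $R \imm \alpha \cup \beta$, while $\alpha \cup \beta \subseteq R$; equality follows by anti-symmetry of $\imm$ on trivial surfaces. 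For path-connectedness, suppose $\alpha \cap \beta$ had a component $C$ not containing $o_R$, and form the trivial surface $R'$ obtained from $R$ by re-separating $\alpha$ and $\beta$ along $C$ while keeping them glued along the component of $o_R$. One checks that $R'$ is Hausdorff and that $P \imm R'$ and $Q \imm R'$, so $R \imm R'$ by minimality; the tautological collapsing immersion $R' \imm R$ then forces $R = R'$ — but then the collapsing map is an isomorphism, hence injective, contradicting that it identified two copies of each point of $C$.

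The step I expect to be the genuine obstacle is precisely the one I assumed: that $P$ and $Q$ \emph{embed}, not merely immerse, in $R$. Corollary~\ref{prop:fusion properties}(2) supplies this the moment one exhibits a trivial surface into which $P$ embeds and $Q$ immerses; producing such an ambient surface is, in effect, the content of the constructive description of the fusion given later in the paper, and I would invoke that. Lacking it, one can construct the ambient surface directly: exhaust $Q$ by open sub-disks using a closed disk family (Proposition~\ref{prop:closed disk family}) and successively enlarge $P$ by gluing on a chart-sized topological disk at each stage, via the local union statement established in the proof of Corollary~\ref{prop:fusion properties}(3). The delicate point there is checking Hausdorffness after each gluing: if the newly attached chart could ``close up'' non-Hausdorffly against the existing surface, the developing-map condition would let one enlarge the already-glued region, contradicting its maximality.
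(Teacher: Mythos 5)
The decisive gap is your standing assumption that $P$ and $Q$ \emph{embed} in $R=P\fuse Q$: the canonical maps are only immersions, and they can genuinely fail to be injective, so the van Kampen decomposition into two simply connected open pieces is not available. Concretely, let $P$ be the ``overwrapping strip'' $\{(s,\theta):1<s<2,\ -\pi/2<\theta<5\pi/2\}$ with developing map $(s,\theta)\mapsto(s\cos\theta-\tfrac32,\,s\sin\theta)$ and basepoint $(\tfrac32,0)$, and let $Q$ be a round ball of radius $10$ about $\0$ (so $\dev|_Q$ is injective and its image contains $\dev(P)$). Then $P\imm Q$, so $P\fuse Q=Q$ by Corollary \ref{prop:fusion properties}(1), and the canonical immersion $P\to P\fuse Q$ identifies $(s,\theta)$ with $(s,\theta+2\pi)$; in particular $P$ does not embed in $P\fuse Q$. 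The same example defeats your fallback: there is \emph{no} trivial surface $S$ with $P\emb S$ and $Q\imm S$, because if $e:P\emb S$ and $\iota:Q\imm S$, then taking the two paths in $P$ from the basepoint to $(1.7,0)$ and to $(1.7,2\pi)$ and lifting their common developed images into $Q$, uniqueness of path lifts through the local homeomorphism $\dev|_S$ forces $e(1.7,0)=\iota(0.2,0)=e(1.7,2\pi)$, contradicting injectivity of $e$. So Corollary \ref{prop:fusion properties}(2) cannot be fed the hypothesis it needs, and no gluing construction can manufacture such an ambient $S$. (Even granting embeddings, your connectivity argument for $\alpha\cap\beta$ rests on the unverified Hausdorffness of the ``re-separated'' surface $R'$, which is exactly the kind of point that fails for such regluings; and with a disconnected intersection van Kampen no longer yields triviality, so this step cannot be waved through.)

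The paper's proof avoids all contact with embeddings and with the internal structure of $R$: supposing $R=P\fuse Q$ is not simply connected, pass to the universal cover $\pi:\tilde R\to R$. Since $P$ and $Q$ are simply connected, the immersions $P\imm R$ and $Q\imm R$ lift to basepoint-preserving immersions into $\tilde R$, so $\tilde R$ satisfies statement (I) of Theorem \ref{thm:generalized fusion}; and if $P\imm S$ and $Q\imm S$ then $R\imm S$, whence $\tilde R\imm S$ by composing with the covering map, so $\tilde R$ satisfies (II) as well. This contradicts the uniqueness of the fusion, so $R$ is simply connected, and your ``soft topological input'' (a simply connected surface admitting a local homeomorphism to $\R^2$ is an open disk) finishes exactly as you intended. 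I recommend replacing your van Kampen scheme with this covering-space argument; as it stands, your proposal's central premise is false and its repair strategy cannot succeed.
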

\begin{proof}
Suppose to the contrary that $R=\Fuse \sP$ is not simply connected. 
Let $\tilde R$ be the universal cover of $R$ and let $\pi:\tilde R \to R$ be the covering.
We will prove that $\tilde R$ also satisfies statements (I) and (II) of the Fusion theorem,
contradicting uniqueness unless $\tilde R=R$. 

Since each $P \in \sP$ is simply connected, the immersion $P \imm R$ lifts to an immersions $P \imm \tilde R$. Thus (I) is satisfied by $\tilde R$. 
Now suppose that $P \imm S$ for all $P \in \sP$. By statement (II) for $R$, we know $R \imm S$. The covering map
$\tilde R \to R$ is an immersion, so by composition with the covering map, $\tilde R \imm S$. Thus, (II) is satisfied.
\end{proof}

For the rest of the section we will work on proving the Generalized Fusion Theorem, and we will only study trivial surfaces. We prove this theorem in a series of steps starting with uniqueness (assuming existence).

\begin{proof}[Proof of uniqueness in the Generalized Fusion Theorem]
Suppose there are two trivial surfaces $R_1$ and $R_2$ which satisfy statements (I) and (II) of the Theorem. Then 
by statement (I), for each $j \in \{1,2\}$ and each $P \in \sP$, we have
$P \imm R_j$. Then by statement (II), we
have $R_1 \imm R_2$ and $R_2 \imm R_1$. So $R_1=R_2$ by Corollary \ref{cor:partial ordering},
which may be seen to hold for trivial surfaces. (The same proof works as for planar surfaces.)
\end{proof}

\subsection{Construction of the fusion}
\label{sect:constructive fusion}
We construct the fusion of $\sP$ as a quotient of the disjoint union $\bigsqcup_{P \in \sP} P$. 
We make this a topological space by making each open set in any $P \in \sP$ open in the disjoint union.

The bundle-wide developing map $\dev$ is defined on the union of all trivial surfaces,
so by restriction, it is also defined on the disjoint union $\bigsqcup_{P \in \sP} P$.

Let $\sim$ be an equivalence relation on $\bigsqcup_{P \in \sP} P$. Make the following choices:
\begin{itemize}
\item Let $P_1$ and $P_2$ be a pair of (not necessarily distinct) surfaces  in $\sP$.
\item Let $r_1 \in P_1$ and $r_2 \in P_2$ be so that $r_1 \sim r_2$.
\item Let $\gamma_1:[0,1] \to P_1$ and $\gamma_2:[0,1] \to P_2$ be paths so that
$\gamma_1(0)=r_1$, $\gamma_2(0)=r_2$ and  
$$\dev \circ \gamma_1(t)=\dev \circ \gamma_2(t) \quad \text{for all $t \in [0,1]$}.$$
\end{itemize}
We say that $\sim$ is {\em path invariant} if for every choice made as above,
we have $\gamma_1(1) \sim \gamma_2(1)$.  

\begin{theorem}[Constructive Fusion Theorem]
\label{thm:constructive fusion}
Let $\sim$ be the smallest path invariant equivalence relation on $\bigsqcup_{P \in \sP} P$ so that 
for each pair of surfaces $P$ and $Q$ in $\sP$, we have $o_P \sim o_Q$. Then, $\bigsqcup_{P \in \sP} P / \sim$ is isomorphic to the fusion of $\sP$. Here, the local homeomorphism from $\bigsqcup_{P \in \sP} P / \sim$
to $\R^2$ is provided by
the developing map, which descends to this quotient.
\end{theorem}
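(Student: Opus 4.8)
The plan is to show that the quotient $R := \bigl(\bigsqcup_{P \in \sP} P\bigr)/\sim$, equipped with the developing map that descends to it, is a trivial surface satisfying conditions (I) and (II) of the Generalized Fusion Theorem. Since uniqueness has already been proven, this simultaneously establishes existence of the fusion and the asserted constructive description, i.e. that this quotient is isomorphic to $\Fuse \sP$. First I would record that $\sim$ is well defined: an intersection of path invariant equivalence relations is again a path invariant equivalence relation, and the collection of path invariant equivalence relations containing every pair $(o_P, o_Q)$ is nonempty (it contains the total relation), so a smallest one exists. The engine of the whole argument is the resulting universal property: to prove that $\sim$ is contained in some relation $\approx$, it suffices to check that $\approx$ is a path invariant equivalence relation with $o_P \approx o_Q$ for all $P, Q \in \sP$.

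Applying this to the relation ``$p \approx q$ iff $\dev(p) = \dev(q)$'' --- which is a path invariant equivalence relation (if $\dev \circ \gamma_1 = \dev \circ \gamma_2$ then in particular the endpoints develop to the same point) containing all $(o_P, o_Q)$ --- shows $p \sim q \Rightarrow \dev(p) = \dev(q)$, so $\dev$ descends to $\overline\dev : R \to \R^2$. Next I would prove that the quotient map $\pi : \bigsqcup P \to R$ is a local homeomorphism. Around any point pick a neighborhood $V$ (inside the relevant summand) on which $\dev$ is a homeomorphism onto an open ball. Then $\pi|_V$ is injective, since $\pi(v_1) = \pi(v_2)$ forces $\dev(v_1) = \dev(v_2)$, hence $v_1 = v_2$. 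And $\pi(V)$ is open: if $q \sim v$ for some $v \in V$, choose a neighborhood $V'$ of $q$ mapped by $\dev$ homeomorphically onto a ball $W' \subseteq \dev(V)$; for $q' \in V'$, lift the $\dev$-segment from $\dev(q)$ to $\dev(q')$ both inside $V'$ (ending at $q'$) and inside $V$ (starting at $v$, ending at some $v' \in V$), and path invariance of $\sim$ gives $q' \sim v'$, so $V' \subseteq \pi^{-1}(\pi(V))$. A parallel argument shows $R$ is Hausdorff: to separate $\pi(p) \ne \pi(q)$, use disjoint $\dev$-balls if $\dev(p) \ne \dev(q)$, and if $\dev(p) = \dev(q)$ while $p \not\sim q$, map neighborhoods of $p$ and $q$ onto a common $\dev$-ball and observe that any identification of a point of one with a point of the other would, by pulling back a radial path and invoking path invariance, force $p \sim q$. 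Since $R$ is connected (each $P$ is path connected and all the $o_P$ are glued together), locally Euclidean, Hausdorff, and carries the orientation-preserving local homeomorphism $\overline\dev$ taking the basepoint to $\0$, it is a trivial surface; the only subtlety for arbitrary $\sP$ is second countability of $R$, which I would obtain by noting that $\overline\dev$ endows $R$ with a Riemann surface structure, so that Rad\'o's theorem applies.

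Condition (I) is then immediate: for $P \in \sP$ the map $\pi|_P : P \to R$ is continuous, sends $o_P$ to the basepoint of $R$, and satisfies $\overline\dev \circ \pi|_P = \dev|_P$, so it is the immersion $P \imm R$. For condition (II), suppose $Q$ is a trivial surface with immersions $\iota_P : P \imm Q$ for all $P \in \sP$, and let $\iota : \bigsqcup P \to Q$ restrict to $\iota_P$ on each $P$. The relation $\{(p, q) : \iota(p) = \iota(q)\}$ is an equivalence relation containing all $(o_P, o_Q)$ (immersions respect basepoints), and it is path invariant: if $\iota(p) = \iota(q)$ and $\dev \circ \gamma_1 = \dev \circ \gamma_2$, then $\iota \circ \gamma_1$ and $\iota \circ \gamma_2$ are paths in $Q$ with the same starting point and the same $\dev$-image, hence equal by unique lifting for the local homeomorphism $\dev|_Q$; in particular their endpoints agree. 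By the universal property, $\sim$ is contained in this relation, so $\iota$ descends to a continuous $\bar\iota : R \to Q$, which one checks respects basepoints and developing maps, giving $R \imm Q$. Together with uniqueness, $R$ is the fusion of $\sP$.

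I expect the main obstacle to be the point-set topology in the second paragraph --- proving $\pi$ is a local homeomorphism and that $R$ is Hausdorff --- since that is where path invariance must be deployed carefully and where the $\dev$-ball neighborhoods have to be chosen compatibly; the descent arguments for $\overline\dev$ and $\bar\iota$ and conditions (I)--(II) are short once the universal property of $\sim$ is in hand.
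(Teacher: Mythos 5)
Your proposal is correct and follows essentially the same route as the paper's proof: construct the quotient, check that the developing map descends and gives it the structure of a trivial surface (local homeomorphism plus Hausdorffness via path invariance and lifted segments), then obtain (I) from the quotient maps and (II) from the minimality of $\sim$ applied to the relation $\iota(p)=\iota(q)$, finishing with the already-established uniqueness. The differences are only cosmetic: the paper works with metric balls of embedding radius where you use $\dev$-balls, and it passes silently over the existence of the smallest path invariant relation and the second-countability point that you settle with Rad\'o's theorem.
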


We establish a corollary to the Constructive Fusion Theorem, which reveals some structure of the fusion of infinitely many surfaces which is unapparent in the original Fusion Theorem.

\begin{corollary}
\label{cor:fusion finiteness}
Let $\sP$ be an infinite collection of trivial surfaces. Let $R=\Fuse \sP$. For $P \in \sP$, let $\iota_P:P \imm R$ be the immersion guaranteed by statement (I) of the Fusion Theorem. Let $p$ be a point in $P \in \sP$ and $q$ be a point in $Q \in \sP$, and suppose that $\iota_P(p)=\iota_Q(q)$. Then there is a finite subcollection $\sF \subset \sP$ containing $P$ and $Q$ so that the immersions $j_P:P \imm \Fuse \sF$ and $j_Q:Q \imm \Fuse \sF$ satisfy $j_P(p)=j_Q(q)$.
\end{corollary}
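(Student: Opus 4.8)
The plan is to use the constructive description of the fusion from Theorem \ref{thm:constructive fusion}. Recall that $R = \bigsqcup_{P \in \sP} P / \sim$, where $\sim$ is the smallest path invariant equivalence relation identifying all the basepoints $o_P$. The immersion $\iota_P : P \imm R$ is then (the restriction to the image of $P$ of) the quotient map $\bigsqcup_{P \in \sP} P \to R$. So $\iota_P(p) = \iota_Q(q)$ means precisely that $p \sim q$ in the disjoint union. The key point is to understand what it means for two points to be identified by the \emph{smallest} path invariant equivalence relation generated by the basepoint identifications: such an element of $\sim$ should be witnessed by a finite chain of "moves," each move being either a basepoint identification or a path-invariance step, and each such move involves only finitely many surfaces.

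First I would make precise the finite-witness structure of $\sim$. Define a relation $\sim_{\sF}$ for each finite subset $\sF \subset \sP$ to be the smallest path invariant equivalence relation on $\bigsqcup_{P \in \sF} P$ identifying all basepoints $o_P$, $P \in \sF$; by Theorem \ref{thm:constructive fusion} this is exactly the relation whose quotient gives $\Fuse \sF$, and the quotient maps are the immersions $j_P : P \imm \Fuse \sF$. I claim that $\sim = \bigcup_{\sF} \sim_{\sF}$, the union taken over all finite $\sF \subset \sP$ (where we regard $\sim_\sF$ as a relation on $\bigsqcup_{P\in\sP} P$ supported on the sub-union over $\sF$). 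The inclusion $\supseteq$ is immediate: each $\sim_\sF$ is path invariant on its sub-union, so it extends (by taking the union with the diagonal elsewhere, and closing under transitivity) to a path invariant equivalence relation on the full disjoint union that identifies the requisite basepoints; hence it is contained in $\sim$ by minimality. For the inclusion $\subseteq$, I would show that $\bigcup_\sF \sim_\sF$ is itself an equivalence relation which is path invariant and identifies all basepoints $o_P \sim o_Q$; then minimality of $\sim$ gives $\sim \subseteq \bigcup_\sF \sim_\sF$.

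The main work, and the place I expect the only real subtlety, is checking that $\bigcup_\sF \sim_\sF$ is an equivalence relation and is path invariant. Reflexivity and symmetry are clear. For transitivity, given $a \sim_{\sF_1} b$ and $b \sim_{\sF_2} c$, one takes $\sF = \sF_1 \cup \sF_2$ and uses that $\sim_{\sF_1} \subseteq \sim_{\sF}$ and $\sim_{\sF_2} \subseteq \sim_{\sF}$ (this monotonicity also needs a short argument: the extension of $\sim_{\sF_1}$ to $\bigsqcup_{P \in \sF} P$ obtained by adding the diagonal is path invariant and contains the basepoint identifications for $\sF_1$, hence lies inside $\sim_\sF$; and it does contain the basepoint identifications among the larger set only if $\sF_1$ already contained all those surfaces — so more carefully, one should argue $\sim_{\sF_1}$ extended by the basepoint-generated relation on the new surfaces is contained in $\sim_\sF$, which again follows from minimality). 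Then $a \sim_\sF b \sim_\sF c$ gives $a \sim_\sF c$ by transitivity of $\sim_\sF$. For path invariance: given $r_1 \sim r_2$ via some $\sim_\sF$, and paths $\gamma_1, \gamma_2$ in surfaces $P_1, P_2 \in \sP$ with matching developments and $\gamma_i(0) = r_i$, enlarge $\sF$ to $\sF' = \sF \cup \{P_1, P_2\}$; since $\sim_{\sF} \subseteq \sim_{\sF'}$ we have $r_1 \sim_{\sF'} r_2$, and then path invariance of $\sim_{\sF'}$ (which holds by definition) gives $\gamma_1(1) \sim_{\sF'} \gamma_2(1)$, so $\gamma_1(1) \sim \gamma_2(1)$ in $\bigcup_\sF \sim_\sF$. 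Finally $o_P \sim_{\{P,Q\}} o_Q$ handles the basepoint condition.

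Granting the claim $\sim = \bigcup_\sF \sim_\sF$, the corollary follows at once: the hypothesis $\iota_P(p) = \iota_Q(q)$ says $p \sim q$, so $p \sim_\sF q$ for some finite $\sF \subset \sP$, and by replacing $\sF$ with $\sF \cup \{P, Q\}$ (using monotonicity once more) we may assume $P, Q \in \sF$. Since $\sim_\sF$ is the relation whose quotient is $\Fuse \sF$, with quotient maps the immersions $j_P, j_Q$ of Theorem \ref{thm:constructive fusion}, the statement $p \sim_\sF q$ is exactly $j_P(p) = j_Q(q)$, as desired.
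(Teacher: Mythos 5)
Your route is genuinely different from the paper's, and it works modulo one recurring logical slip. The paper also starts from the Constructive Fusion Theorem (Theorem \ref{thm:constructive fusion}), but it filters $\sim$ by ``generation depth'': it builds an increasing chain of relations $\sim_0 \subseteq \sim_1 \subseteq \cdots$ on $\bigsqcup_{P\in\sP}P$ (basepoint identifications, then a path-invariance step, then transitive closure, iterated), observes $\sim=\bigcup_n\sim_n$, and proves the finite-witness statement by induction on $n$, applying the Constructive Fusion Theorem inside each finite fusion and taking unions of the witnessing finite sets at the transitive-closure stage. You instead filter by finite subsets $\sF\subset\sP$ and show directly that $\bigcup_\sF\sim_\sF$ is a path-invariant equivalence relation identifying all basepoints, so it contains (hence equals) $\sim$ by minimality; this trades the paper's induction for a monotonicity lemma plus the easy check that the union is path invariant (which is fine, since a path starting at a point of an $\sF$-surface never leaves that surface, so the enlargement to $\sF'$ is not even needed). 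Your identification of $\iota_P$ and $j_P$ with the quotient maps is legitimate by uniqueness of immersions (Proposition \ref{prop:unique}, which carries over to trivial surfaces), so the endgame is correct, and overall your version is arguably cleaner than the paper's.

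The slip: both times you need a containment of the form ``smaller relation $\subseteq$ larger relation'' ($\sim_\sF\subseteq\sim$, and $\sim_{\sF_1}\subseteq\sim_{\sF_1\cup\sF_2}$), you argue by extending the smaller relation by the diagonal and then asserting it ``is contained in $\sim$ (resp.\ $\sim_\sF$) by minimality.'' That is backwards: minimality of $\sim$ only bounds $\sim$ from above, and your extension does not identify all basepoints anyway, so it is not even in the family over which $\sim$ is minimal. The correct (and equally short) argument applies minimality of the \emph{smaller} relation to the \emph{restriction} of the larger one: the restriction of $\sim$ (resp.\ of $\sim_{\sF_1\cup\sF_2}$) to $\bigsqcup_{P\in\sF}P$ (resp.\ $\bigsqcup_{P\in\sF_1}P$) is an equivalence relation, identifies the relevant basepoints, and is path invariant because paths based in those surfaces stay in them; minimality of $\sim_\sF$ (resp.\ $\sim_{\sF_1}$) then yields the containment. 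With that repair your proof is complete.
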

\begin{proof}
Let $\sim$ be the equivalence relation from the Constructive Fusion Theorem.
We think of equivalence relations on $\bigsqcup_{P \in \sP} P$ as subsets of $(\bigsqcup_{P \in \sP} P)^2$. 
We will construct an increasing sequence of equivalence relations $\sim_n$ on $\bigsqcup_{P \in \sP} P$ so that $\bigcup_n \sim_n=\sim$.
Then, the finiteness result follows if the finiteness result is proved for each $\sim_n$.

We define $\sim_n$ inductively in the integers $n \geq 0$ beginning with $\sim_0$. Let $p,q \in \bigsqcup_{P \in \sP} P$. We define 
$p \sim_0 q$ if $p=q$ or if $p$ and $q$ are both basepoints of surfaces in $\sP$. This can be seen to be an equivalence relation.
Now suppose that $\sim_n$ is defined and let $p \in P \in \sP$ and $q \in Q \in \sP$ be points in $\bigsqcup_{P \in \sP} P$. 
We say $p$ is $n+1$-related to $q$ (denoted $p \equiv_{n+1} q$) if there are curves $\gamma_1:[0,1] \to P$
and $\gamma_2:[0,1] \to Q$ so that $\gamma_1(0) \sim_{n} \gamma_2(0)$, $\gamma_1(1)=p$, $\gamma_2(1)=q$ and $\dev \circ \gamma_1(t)=\dev \circ \gamma_2(t)$ for all $t \in [0,1]$. Observe that $\sim_n \subset \equiv_{n+1}$. We define $\sim_{n+1}$ to be the smallest equivalence relation containing $\equiv_{n+1}$. Since
$\equiv_{n+1}$ is reflexive and symmetric, we can concretely say that $p \sim_{n+1} q$ if $p \equiv_{n+1} q$ or if there is a finite collection $p_1, p_2,\ldots, p_k \in \bigsqcup_{P \in \sP} P$ so that the following holds:
\begin{equation}
\label{eq:relation}
p \equiv_{n+1} p_1 \equiv_{n+1} p_2 \equiv_{n+1} \ldots \equiv_{n+1} p_k \equiv_{n+1} q.
\end{equation}
Observe that by definition of $\sim$ we have $\bigcup_n \sim_n=\sim$.

We now prove our finiteness statement by induction. Let $p \in P \in \sP$ and $q \in Q \in \sP$. If $p$ and $q$ are the same point, then $P=Q$,
we can take $\sF=\{P\}$ so that 
$P=\Fuse \sF$, and the identity map $P \imm P$ sends $p$ and $q$ to the same point. If $p$ and $q$ are basepoints of $P$ and $Q$, respectively, then 
the immersions $P \imm (P \fuse Q)$ and $Q \imm (P \fuse Q)$ carry these points to the basepoint of $P \fuse Q$ by definition of immersion. This proves the finiteness statement for $\sim_0$.

Now suppose $\sim_n$ satisfies the finiteness statement, and suppose that $p \equiv_{n+1} q$. Then, there must be curves $\gamma_1:[0,1] \to P$ and $\gamma_2:[0,1] \to Q$ as above. Then $\gamma_1(0) \sim_{n} \gamma_2(0)$, so there is a finite collection $\sF \subset \sP$ containing $P$ and $Q$ so that the immersions
$\iota_P:P \imm \Fuse \sF$ and $\iota_Q:P \imm \Fuse \sF$ satisfy $\iota_P \circ \gamma_1(0)=\iota_Q \circ \gamma_2(0)$. Then by the Constructive Fusion Theorem applied to $\Fuse \sF$, we see that $\iota_P \circ \gamma_1(1)=\iota_Q \circ \gamma_2(1)$ as well. This proves the finiteness statement for $\equiv_{n+1}$.

Now suppose that $\equiv_{n+1}$ satisfies the finiteness statement, and suppose that $p \sim_{n+1} q$. Then there are points  $p_1, p_2,\ldots, p_k \in \bigsqcup_{P \in \sP} P$ satisfying equation \ref{eq:relation}. Let $p_0=p$ and $p_{k+1}=q$. Let $P_j \in \sP$ be the surface containing $p_j$ for each $j$. 
Then for all $j \in \{0,\ldots, k\}$, there is a finite collection 
$\sF_j$ so that the immersions $\iota_j:P_j \imm \Fuse \sF_j$ and $\iota_j':P_{j+1} \imm \Fuse \sF_j$ satisfies $\iota_j(p_j)=\iota_j'(p_{j+1})$. 
Let $\sF=\bigcup_{j=0}^k \sF_j$. Then, there are immersions $\jmath_j:\Fuse \sF_j \imm \Fuse \sF$ for all $j$. The immersions
$P_j \imm \Fuse \sF$ can be given by $\jmath_j \circ \iota_j$ for $j \leq k$ and by $\jmath_{j-1} \circ \iota'_{j-1}$ for $j \geq 1$. 
It follows that the image of $p_j$ inside $\Fuse \sF$ is independent of $j \in \{0,\ldots,k+1\}$. 
\end{proof}

\subsection{The fusion is a trivial surface}
We will begin by proving that the quotient space described in the Constructive Fusion Theorem
is really a trivial surface. 

\begin{lemma}
Let $\sim$ be the equivalence relation from the Constructive Fusion Theorem.
Then, the quotient $\bigsqcup_{P \in \sP} P/\sim$ has the structure of a trivial surface
with the associated immersion to $\R^2$ given by 
$$\phi([r])=\dev(r) \quad \text{for all $r \in P \in \sP$}.$$
\end{lemma}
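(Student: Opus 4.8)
The plan is to verify in turn that $\phi$ is well defined on the quotient, that the quotient is a locally Euclidean Hausdorff space on which $\phi$ is an orientation-preserving local homeomorphism, and finally that it is connected and second countable, so that it is homeomorphic to a standard pointed surface and $\phi$ (taken as a single chart) exhibits a trivial structure carrying the basepoint to $\0$. Write $\pi\colon \bigsqcup_{P\in\sP}P\to\bigsqcup_{P\in\sP}P/{\sim}$ for the quotient map. Well-definedness of $\phi$ is immediate from minimality: the relation $r_1\approx r_2 \iff \dev(r_1)=\dev(r_2)$ is an equivalence relation, it is path invariant (the endpoints of $\dev$-matching paths have equal $\dev$-values), and it contains every pair of basepoints since $\dev(o_P)=\0$; hence $\sim\,\subseteq\,\approx$, so $r_1\sim r_2$ implies $\dev(r_1)=\dev(r_2)$ and $\phi([r]):=\dev(r)$ is well defined, and it is continuous because $\phi\circ\pi=\dev$.

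The heart of the argument is a local triviality statement for $\sim$. Given $r\in P\in\sP$, use that $\dev|_P$ is a local homeomorphism to choose a disk neighborhood $U_r\subset P$ of $r$ on which $\dev$ restricts to a homeomorphism onto an open set $W_r\subset\R^2$. Then $\pi|_{U_r}$ is injective, because $x,y\in U_r$ with $x\sim y$ forces $\dev(x)=\dev(y)$ and hence $x=y$. I claim it is in fact a homeomorphism onto an open subset of the quotient, and this is where path invariance is used: if $y\sim x$ with $x\in U_r$ and $y$ lying in some $Q\in\sP$, shrink a disk neighborhood $N_y\subset Q$ of $y$ so that $\dev(N_y)\subset W_r$ and set $h=(\dev|_{U_r})^{-1}\circ\dev|_{N_y}\colon N_y\to U_r$, a homeomorphism onto an open subset of $U_r$ with $h(y)=x$; for any $z\in N_y$, carrying a path from $y$ to $z$ in $N_y$ over to its $h$-image, a path from $x$ to $h(z)$ in $U_r$, and applying path invariance to the related pair $x\sim y$ gives $z\sim h(z)$. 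From this (shrinking $N_y$ as needed) one gets that $\pi^{-1}\big(\pi(O)\big)$ is open for every open $O\subseteq U_r$, so $\pi(U_r)$ is open and $\pi|_{U_r}$ is an open map, hence a homeomorphism onto $\pi(U_r)$. Therefore $\phi|_{\pi(U_r)}=\dev\circ(\pi|_{U_r})^{-1}$ is a homeomorphism onto $W_r$; since the sets $\pi(U_r)$ cover the quotient, it is a locally Euclidean space without boundary on which $\phi$ is a local homeomorphism, orientation-preserving because each $\dev|_{U_r}$ is, and any two of these charts agree with $\phi$ on their overlap, so their transition maps are restrictions of the identity — precisely the data of a trivial structure.

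It remains to establish the Hausdorff, connectedness, and countability properties. If $[r]\neq[s]$ and $\phi([r])\neq\phi([s])$, separate them by $\phi$-preimages of disjoint balls. If $\phi([r])=\phi([s])$, choose a small round ball $W$ about this common value inside $\dev(U_r)\cap\dev(U_s)$ and replace $U_r,U_s$ by the disks $(\dev|_{U_r})^{-1}(W)$ and $(\dev|_{U_s})^{-1}(W)$; should $[z]\in\pi(U_r)\cap\pi(U_s)$, then $z\sim x\in U_r$ and $z\sim y\in U_s$, so $x\sim y$ and $\dev(x)=\dev(y)$, and a path $\gamma$ from $r$ to $x$ in $U_r$ together with $(\dev|_{U_s})^{-1}\circ\dev\circ\gamma$, which runs from $s$ to $y$ in $U_s$, have equal developments, so their reverses start at the related pair $x\sim y$ and path invariance yields $r\sim s$, a contradiction; hence $\pi(U_r)$ and $\pi(U_s)$ separate $[r]$ and $[s]$. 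The quotient is connected because each $P$ is connected and every basepoint $o_P$ is glued to a single point. Second countability follows from the general fact that a connected surface admitting a local homeomorphism to the plane is second countable — for instance, pull back the complex structure of $\C=\R^2$ along $\phi$ and invoke Rad\'o's theorem. Assembling these, $\bigsqcup_{P\in\sP}P/{\sim}$ is a connected, oriented, second-countable surface with distinguished basepoint, and $\phi$ is a trivial structure on it carrying the basepoint to $\0$, as claimed.

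I expect the local triviality claim to be the real obstacle: it is what converts the purely combinatorial description of $\sim$ (the minimal path-invariant relation gluing all basepoints) into honest Euclidean charts on the quotient, and the holonomy map $h$ built from $\dev$ is the key device. Once that is in place, Hausdorffness is a short variant of the same path-pushing argument, and the remaining manifold bookkeeping — orientation, the basepoint, and second countability — is routine modulo the cited surface-topology fact.
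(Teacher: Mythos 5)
Your argument is correct and follows essentially the same route as the paper: well-definedness of $\phi$ because $\sim$ is contained in the dev-respecting relation, openness of the $\sim$-saturation of small chart neighborhoods via path invariance (your holonomy map $h$ plays the role of the paper's Proposition \ref{prop:open}, which does the same thing with metric balls of radius at most the embedding radius and straight-line segments), hence $\phi$ is a local homeomorphism, and a path-pushing argument for Hausdorffness. The only substantive addition is your explicit verification of connectedness and second countability via Rad\'o's theorem, which the paper's proof does not address; otherwise the differences (topological chart neighborhoods versus embedding-radius balls) are cosmetic.
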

We will devote the remainder of the section to the proof of this fact. We now describe our plan. We will check that $\phi$ is a well defined map. Then we will prove that $\phi$ is a local homeomorphism. This demonstrates that 
$\bigsqcup_{P \in \sP} P/\sim$ is locally modeled on $\R^2$. Finally, we will show that $\bigsqcup_{P \in \sP} P/\sim$ is Hausdorff. This proves that it is a surface, and $\phi$ gives this surface a trivial planar structure.

\begin{proof}[Proof that $\phi$ is well defined]
The basepoint of $\bigsqcup_{P \in \sP} P/\sim$ is given by the equivalence class $[o_P]$
for some (any) $P \in \sP$. We note that the
developing map sends $o_P$ to zero. By induction, we can see that the points we are forced to identify by path invariance (see the definition above Theorem \ref{thm:constructive fusion}) also have the same image under the developing map. Therefore, $\phi$ is well defined.
\end{proof}

In order to prove the remainder of the lemma, it is useful to use the following:
\begin{proposition}
\label{prop:open}
Let $r_1 \in P_1$ for some $P_1 \in \sP$. Let $B \subset P_1$ be an open metric ball of radius $\epsilon$ about $r_1$ with $\epsilon$ taken to be so small that the developing map restricted to $B$ gives a homeomorphism to a ball of radius $\epsilon$ in the plane. Let 
$$U=\{r_2 \in \bigsqcup_{P \in \sP} P~:~ \text{$r_2 \sim b$ for some $b \in B$}\}.$$
Then, $U$ is open in $\bigsqcup_{P \in \sP} P$, and so by definition 
$B'=\{[b]~:~b \in B\}$ is open in $\bigsqcup_{P \in \sP} P/\sim.$
\end{proposition}
\begin{proof}
We remind the reader of the topology we placed on $\bigsqcup_{P \in \sP} P$.
We need to show that $U \cap P_2$ is open in $P_2$ for all $P_2 \in \sP$. 
Let $r_2 \in U \cap P_2$. Then, $r_2 \sim b$ for some $b \in B$.
Since the image under the developing map is an $\sim$-invariant, we have
$$|\dev(r_1)-\dev(r_2)|=|\dev(r_1)-\dev(b)| < \epsilon,$$
because $b$ lies in the ball $B$ of radius $\epsilon$ about $r_1$. Choose 
$\epsilon'>0$ small enough so that the following hold:
\begin{enumerate}
\item The developing map restricted to the open $\epsilon'$-ball about $r_2$ is a homeomorphism to a ball in the plane of radius $\epsilon'$.
\item The $\epsilon'$-ball about $b$ is a subset of the ball $B$.
\end{enumerate}
Let $D$ denote the open ball of radius $\epsilon'$ about $r_2$ in $P_2$. Let $r_3 \in D$. Then by (1) there is a path $\gamma_1$ of length less than $\epsilon'$ joining $r_2$ to $r_3$. 
Similarly, there is a path $\gamma_2$ starting at $b$
and contained in $B$ so that $\dev \circ \gamma_1=\dev \circ \gamma_2$. 
This path stays within $B$ by (2).
Thus $U \cap P_2$ is open as desired.
\end{proof}

\begin{proof}[Proof that $\phi$ is a local homeomorphism]
Choose any $[r_1] \in \bigsqcup_{P \in \sP} P/\sim$, and choose $r_1 \in [r_1]$. Let $U$ be as in the above proposition. Then the set $B'$ at the end of the proposition is an open set containing $[r_1]$. Furthermore, since each point in $B'$
has a unique representative in $B$, we know that $\phi|_{B'}$
is one-to-one and onto its image, which is an open ball in $\R^2$. 

We must prove that $\phi|_{B'}$ is continuous. 
This also follows from the proposition. Let $\v$ be a point in $\phi(B')$, and let $[r_2]=\phi^{-1}(\v)$. We
can choose the representative $r_2 \in [r_2] \cap B$. 
Then the neighborhood of radius $\epsilon-|\dev(r_1)-\dev(r_2)|$ about $r_2$ is isometric to a Euclidean ball. Applying the proposition to this choice
of center $r_2$ and radius produces an open set containing $r_2$ and contained in $B'$. 

The fact that $(\phi|_{B'})^{-1}$ is continuous is a tautology,
because of the topology we placed on $\bigsqcup_{P \in \sP} P$.
Recall that the union of equivalence classes in
$B'$ is open. Call this union $U$ as in the lemma above. Now let $C' \subset B'$ be a smaller open set. This by definition means that its union of equivalence classes $V' \subset U'$ is open.
That is, $V' \cap P$ is open for each $P \in \sP$. 
Moreover by definition of $\phi$, we have
$$\phi|_{B'}(C')=\dev(V')=\bigcup_{P \in \sP} \dev|_{P} (V' \cap P).$$
But, the image of any open set in a trivial surface under the developing map is open in $\R^2$,
and any union of open sets is open.
\end{proof}

\begin{proof}[Proof that $\bigsqcup_{P \in \sP} P/\sim$ is Hausdorff.]
Let $[r_1], [r_2] \in \bigsqcup_{P \in \sP} P/\sim$ be distinct. We will separate these points by open sets. First suppose that
$\phi([r_1]) \neq \phi([r_2])$. Then by constructing neighborhoods around each of $[r_1]$ and $[r_2]$ using Proposition \ref{prop:open} with radius less than or equal to 
$\frac{1}{2} |\phi([r_1]) - \phi([r_2])|$ produces open sets which can discerned to be disjoint because their images under $\phi$ are disjoint.

Now suppose that $[r_1]$ and $[r_2]$ are distinct but that $\phi([r_1]) = \phi([r_2])$. Choose representatives $r_1 \in [r_1]$ and $r_2 \in [r_2]$. 
Suppose $r_1 \in P_1$ and $r_2 \in P_2$. Let $B_1 \subset P_1$ and $B_2 \subset P_2$ be open metric balls about $r_1$ and $r_2$, respectively, which are each isometric to a Euclidean metric ball. These balls determine open sets $B_1', B_2' \subset \bigsqcup_{P \in \sP} P/\sim$ by Proposition \ref{prop:open}. We claim that they are disjoint. Suppose not. Then, there is a $[r_3] \in B_1' \cap B_2'$. By the proposition, we can then find 
points $b_1 \in B_1 \cap [r_3]$ and $b_2 \in B_2 \cap [r_3]$. Since $b_1 \sim b_2$, they have the same image under $\dev$. Parameterize the line segments joining $b_1$ to $r_1$ within $B_1$ and joining $b_2$ to $r_2$ within $B_2$ in the same way. Then, path invariance guarantees that $r_1 \sim r_2$. This contradicts the distinctness of $[r_1]$ and $[r_2]$.
\end{proof}

\subsection{Proof of the Fusion Theorem}
We now prove the Constructive Fusion Theorem. Note that this immediately implies the Generalized Fusion Theorem (Theorem \ref{thm:generalized fusion}), and the original version of the Fusion Theorem (Theorem \ref{thm:main fusion})
follows from Proposition \ref{prop:fusion disks}.
\begin{proof}[Proof of Theorem \ref{thm:constructive fusion}]
Let $P \in \sP$. Let $R=\bigsqcup_{P \in \sP} P / \sim$, where $\sim$ is equivalence relation described in the theorem. We will prove that $R$ has the properties described in the Fusion Theorem. 

Statement (I) of Theorem \ref{thm:generalized fusion} simply requires proving that the natural maps $P \to R$ respect the basepoints, respect the developing maps, and are local homeomorphisms. Basepoints are respected by construction. By definition of $\phi$, the developing map is respected. This proves that the natural map $P \to R$ is an immersion. Finally, the fact that $P \to R$ is a local homeomorphism follows from the fact that the developing maps are respected and are local homeomorphisms.

Statement (II) of Theorem \ref{thm:generalized fusion} reduces to a statement about equivalence relations. Suppose $P \imm S$ for all $P \in \sP$. 
Let $j:\bigsqcup_{P \in \sP} P \to S$ be the simultaneous immersion of all planar surfaces $P \in \sP$ into $S$. 
Then, we define an equivalence relation on $\bigsqcup_{P \in \sP} P$ by $p \approx q$ 
for $p \in P \in \sP$ and $q \in Q \in \sP$ if $j(p)=j(q)$. 
Then all basepoints are equivalent and $\approx$ is path invariant. 
Since $\sim$ is the smallest such relation, each $\sim$-equivalence class is contained in an $\approx$-equivalence class. This gives a canonical map $R \to S$. By construction, it is an immersion.
\end{proof}

\section{New open sets and second-countability}
\label{sect:rectangular unions}
In this section, we will study subsets of planar surfaces which are unions of rectangles. Analyzing such sets will enable us to prove two important results about the immersive topology.

\begin{theorem}[Open sets in $\tM$]
\label{thm:open}
Let $P$ be a planar surface. If $K \in \PC(P)$ is compact,
then both $\tM_{\imm}(K)$ and $\tM_{\emb}(K)$ are open in the immersive topology on $\tM$. If $U \in \PC(P)$ is open, then
both $\tM_{\not \imm}(U)$ and $\tM_{\not \emb}(U)$ are open.
\end{theorem}

\begin{theorem}
\label{thm:2nd countable}
The immersive topologies on $\tM$ and $\tE$ are second-countable, that is, they admit a countable basis.
\end{theorem}
We remark that Propositions \ref{prop:second countability} and \ref{prop:second countability2} will give explicit countable subbases for these topologies. (By general principles, the induced bases are then also countable.)

\subsection{Definition of rectangular union}
A closed {\em rectangle in the plane} is a subset of $\R^2$ of the form 
$$[a,b] \times [c,d]=\{(x,y) \in \R^2~:~
\text{$a \leq x \leq b$ and $c \leq y \leq d$}\},$$
where $a<b$ and $c<d$.
Similarly, an {\em open rectangle} is a set of the form $(a,b) \times (c,d)$.
We call such a rectangles {\em rational} if $a$, $b$, $c$ and $d$ are rational numbers.

Let $P$ be a planar surface. We call $R \subset P$ a {\em closed (resp. open) rectangle} if $\dev(R)$ is a closed (resp. open) rectangle and the restriction $\dev|_{R}:R \to \dev(R)$ is a homeomorphism. We say $R$ is {\em rational} if $\dev(R)$ is.

A closed (resp. open) {\em rectangular union} is a finite union of closed (resp. open) rectangles in a planar surface
which is connected and whose boundary is a disjoint collection of curves.
We call a rectangular union {\em rational} if it can be constructed as a finite union of rational rectangles.

\begin{proposition}
\note{Seems generally relevant}
The closure of an open rectangular union with compact closure is a closed rectangular union. The interior of a closed rectangular union is an open rectangular union.
\end{proposition}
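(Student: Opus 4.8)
The plan is to prove both halves by the same method: exhibit $\overline U$ (respectively the interior of a closed rectangular union $K$) as an explicit finite union of closed (respectively open) rectangles, and then check connectedness and the boundary condition, the latter being the only nontrivial point.

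\emph{The rectangle pieces.} Write $U=R_1\cup\cdots\cup R_n$ with each $R_i$ an open rectangle, so that $\overline U=\overline{R_1}\cup\cdots\cup\overline{R_n}$. I claim each $\overline{R_i}$ is a closed rectangle. Put $g_i=(\dev|_{R_i})^{-1}:\dev(R_i)\to R_i\subset P$. Since $\dev$ is a local isometry and a line segment in the convex Euclidean rectangle $\dev(R_i)$ lifts under $g_i$ to a path of the same length, $g_i$ is $1$-Lipschitz for $d_P$. As $\overline{R_i}\subset\overline U$ is compact, hence complete, the uniformly continuous map $g_i$ extends to a continuous map $\bar g_i:\overline{\dev(R_i)}\to\overline{R_i}$; here $\overline{\dev(R_i)}=\dev(\overline{R_i})$ is a closed Euclidean rectangle because $\dev(\overline{R_i})$ is compact, contains $\dev(R_i)$, and is contained in $\overline{\dev(R_i)}$. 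Since $\dev\circ\bar g_i$ is the inclusion, $\bar g_i$ is injective; being a continuous injection from a compact space into a Hausdorff space, it is a homeomorphism onto $\overline{R_i}$, with inverse $\dev|_{\overline{R_i}}$. Thus $\overline{R_i}$ is a closed rectangle. (For the second assertion the pieces $C_i$ of $K$ already satisfy that $\dev|_{C_i}$ is a homeomorphism, so there the content is only to write $K^\circ$ as a finite union of open rectangles, done below.)

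\emph{Connectedness and boundary for $\overline U$.} $\overline U$ is connected because $U$ is. For the boundary condition it suffices to show $\mathrm{int}(\overline U)=U$: then $\partial\overline U=\overline U\setminus\mathrm{int}(\overline U)=\overline U\setminus U=\partial U$, a disjoint collection of curves by hypothesis. Always $U\subseteq\mathrm{int}(\overline U)$; suppose $x\in\mathrm{int}(\overline U)\setminus U$, so $x\in\partial U$ and $x$ lies on a single component $C$ of $\partial U$, a Jordan curve. Choose a chart disk $D$ about $x$ small enough that $\partial U\cap D$ is a single arc $A$ of $C$ through $x$; then $D\setminus A$ has two components $H^+,H^-$ by the Schoenflies theorem, and by the Jordan curve theorem in the disk $P$ they lie in the two distinct components $W^+,W^-$ of $P\setminus C$. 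Now $U\cap D$ is open, disjoint from $\partial U$, and clopen in $D\setminus A$, hence a nonempty union of some of $\{H^+,H^-\}$. If $U\cap D=H^+\cup H^-$, then $U$ meets both $W^+$ and $W^-$, which is impossible since $U$ is connected and contained in $P\setminus\partial U\subseteq P\setminus C=W^+\sqcup W^-$. Hence $U\cap D$ is exactly one of $H^\pm$, so $\overline U\cap D$ is a half-disk and $x\notin\mathrm{int}(\overline U)$, a contradiction. Therefore $\mathrm{int}(\overline U)=U$.

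\emph{The interior of $K$.} Let $K=C_1\cup\cdots\cup C_n$ be a closed rectangular union. Each $C_i$ is regular closed, hence so is $K$, i.e. $\overline{K^\circ}=K$; consequently $\partial(K^\circ)=\overline{K^\circ}\setminus K^\circ=K\setminus K^\circ=\partial K$, a disjoint collection of curves. The local analysis of the previous paragraph applies at each $x\in\partial K$ (now the case ``$K^\circ\cap D=H^+\cup H^-$'' is excluded directly, since it would force $x\in\mathrm{int}(K)=K^\circ$), showing that $K$ is a surface with boundary $\partial K$; hence its interior $K^\circ$ is connected. Finally, since $C_i^\circ\subseteq K^\circ$ we get $\partial K\cap C_i\subseteq\partial C_i$, so $\partial K\subseteq G:=\bigcup_i\partial C_i$, a finite graph; as $\partial K$ is a $1$-manifold embedded in $G$ it is a union of cycles of $G$, hence a subcomplex, so $S:=K^\circ\cap G=G\setminus\partial K$ is a finite union of arcs, each contained in $K^\circ$. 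One covers $S$ by finitely many open rectangles contained in $K^\circ$: away from the (at most two) endpoints of each arc this is compactness, and near an endpoint $v$ (which lies on $\partial K$, where $K$ is locally a half-disk with the arc approaching $v$ through its open side) a thin axis-aligned open rectangle captures the approaching arc without containing $v$. Then $K^\circ=\bigcup_i C_i^\circ\cup(\text{these rectangles})$ exhibits $K^\circ$ as a finite union of open rectangles.

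\emph{Main obstacle.} The one genuinely non-formal step is the identity $\mathrm{int}(\overline U)=U$ in the first assertion: a priori the closure of an open rectangular union could gain interior along a slit of $U$, and excluding this uses both the hypothesis that $\partial U$ is a disjoint collection of curves and the fact that $P$ is a disk, so that a Jordan curve in $P$ separates it. The rectangle-covering combinatorics in the third paragraph is routine but fiddly; the remaining points (closure of a union, regular-closedness, the extension of $g_i$, connectedness) are formal.
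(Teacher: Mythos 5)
Your proof is correct, and it is considerably more complete than the one in the paper, which disposes of the first assertion in a single sentence (``$\bar U=\bigcup_i \bar R_i$ is a closed rectangular union'') and, for the second, only constructs the extra rectangles: small ones at vertices of $K^\circ$ lying off the open rectangles, and thin ones straddling pairs of rectangles that share an edge-interval. Your treatment of the second half is the same idea in different clothing --- you cover the leftover set $S=K^\circ\cap\bigcup_i\partial C_i$ by extra open rectangles, organizing the bookkeeping through the edge graph and handling the endpoints on $\partial K$ with the thin half-disk rectangles --- so there is no real divergence there, though you additionally check that $\partial(K^\circ)=\partial K$ (regular closedness) and that $K^\circ$ is connected, clauses of the definition the paper's proof never addresses. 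The genuine added value is in the first half: you prove that each $\overline{R_i}$ is in fact a closed rectangle, via the $1$-Lipschitz extension of $(\dev|_{R_i})^{-1}$ to the closed Euclidean rectangle, and this is exactly where the hypothesis of compact closure enters (without it the statement fails, e.g.\ for a rectangle whose closure runs into the ``infinite cone point'' of the cyclic cover of $\R^2\smallsetminus\{\0\}$); and you prove $\mathrm{int}(\overline U)=U$ by the Jordan--Schoenflies separation argument in the disk $P$, which is what guarantees the boundary condition for $\overline U$. The paper buys brevity by treating these points as evident; your version buys a verification of every clause of the definition and makes visible the role of the compactness hypothesis. The remaining informalities you flag (finiteness of the edge graph, that a circle in the graph meeting the interior of an edge contains the edge, the local ``arc crossing a chart disk'' normalization) are of the same order as what the paper itself leaves implicit, so I would not count them as gaps.
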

\begin{proof}
Let $P$ be a planar surface.
Suppose $\{R_i \subset P\}$ is a finite collection of open rectangles whose union is an open rectangular union $U$ with compact closure.
Then $\bar U=\bigcup_i \bar R_i$ is a closed rectangular union.

Now suppose $\sR=\{R_i \subset P\}$ is a collection of closed rectangles whose union is a closed rectangular union $K$.
Then $\bigcup_i R_i^\circ$ may not be as large as $K^\circ$. 
We will construct a larger finite collection of closed rectangles $\sR' \supset \sR$ so that $K^\circ=\bigcup_{R \in \sR'} R^\circ$. We will describe an algorithm for constructing $\sR'$ by adding rectangles beginning with $\sR'=\sR$. 
Let $\Lambda=K^\circ \smallsetminus \bigcup_i R_i^\circ$. 
A point $p \in \Lambda$ is either a vertex of a rectangle in $\sR$ or lies in the interior of an edge of such a rectangle. If $p \in \Lambda$ is a vertex, then it has a neighborhood which lies in $K^\circ$, so we can add a small rectangle to $\sR$ which contains $p$ and is contained entirely in $K^\circ$. 
We add such a rectangle to $\sR'$ for each vertex in $\Lambda$. Now suppose $p \in \Lambda$ is not a vertex. Then, it must lie in the common boundary of two rectangles $R_1$ and $R_2$ lying on opposite sides of a line whose edges intersect in an interval. We can construct a  closed rectangle, $R_0$, which is contained in $R_1 \cup R_2$ and contains the overlap $R_1 \cap R_2$. See below:
\begin{center}
\includegraphics{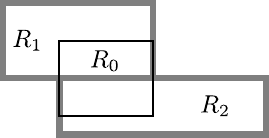}
\end{center}
We add such a rectangle to $\sR'$ for all pairs of rectangles in $\sR$ which intersect in an interval. The resulting $\sR'$ has the desired property.
\end{proof}

\begin{proposition}
\label{prop:open disk}
An open rectangular union is homeomorphic to a finitely punctured disk.
\end{proposition}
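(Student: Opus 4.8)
The plan is to separate a soft topological part from the combinatorial heart of the matter. Write $U = R_1 \cup \dots \cup R_n$, a finite union of open rectangles in a planar surface $P$. Since $P$ is an oriented surface homeomorphic to $\Delta$, and $\Delta$ embeds in the sphere, $U$ is a connected (by definition of a rectangular union), orientable, boundaryless surface of genus $0$; and $U$ is noncompact, since a nonempty proper open subset of the connected space $P$ cannot be closed. By the classification of noncompact surfaces (Ker\'ekj\'art\'o, Richards), such a surface is determined up to homeomorphism by its space of ends, and every isolated end of a genus-zero surface has a neighborhood homeomorphic to a once-punctured disk. Consequently it suffices to show that $U$ has only finitely many ends; then $U \cong S^2 \setminus \{p_1,\dots,p_k\}$ for some $k \geq 1$, which is exactly a disk with $k-1$ punctures.

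To bound the number of ends I would produce a finite cell decomposition of $U$. Let $A \subset \R$ be the set of $x$-coordinates of the vertical sides of the rectangles $\dev(R_i)$ and $B \subset \R$ the set of $y$-coordinates of their horizontal sides; these finite sets determine a grid, which cuts $\R^2$ into open cells of dimensions $0$, $1$, $2$, only finitely many of which meet $\bigcup_i \dev(R_i)$. For each $i$ and each grid cell $c$ with $c \subseteq \dev(R_i)$, set $E_{i,c} = (\dev|_{R_i})^{-1}(c) \subseteq R_i \subseteq U$; this is an embedded copy of $c$, and the cells $c \subseteq \dev(R_i)$ partition $\dev(R_i)$, so the $E_{i,c}$ cover $U$. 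The key lemma is that any two of these pieces are either equal or disjoint: if $E_{i,c}$ and $E_{j,c'}$ share a point then their $\dev$-images meet, forcing $c = c'$; and then $E_{i,c}$ and $E_{j,c}$ are connected sets sharing a point, each carried homeomorphically onto $c$ by $\dev$, so the path-lifting argument used for uniqueness of immersions (Proposition \ref{prop:unique}) shows each is contained in the other. Hence the collection $\mathcal E = \{E_{i,c}\}$ is a \emph{finite} partition of $U$ into cells of dimension at most $2$.

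Finally I would pass to the closure to organize this combinatorial data into honest topology. The hypothesis that the frontier $\partial U$ is a disjoint union of curves makes $\overline{U}$ (closure in $P$) a compact surface with boundary, and the cells $E_{i,c}$, together with the finitely many cells of $\partial U$ onto which they limit, give $\overline U$ a finite polygonal (CW) structure; being a compact, connected, orientable, genus-zero surface with boundary, $\overline U$ is a sphere with finitely many open disks removed. Since $U = \overline U \setminus \partial U$ is obtained from $\overline U$ by deleting finitely many boundary circles together with finitely many properly embedded arcs and circles in its interior, and each such deletion replaces a piece of $\overline U$ by finitely many punctures, $U$ has finitely many ends, which completes the proof.

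The main obstacle is the last step: because $\dev$ is only a local homeomorphism on $P$, the closures $\overline{R_i}$ and the overlaps $R_i \cap R_j$ could a priori be topologically pathological (disconnected, with annular components, or with noncompact closure). The equal-or-disjoint lemma tames the combinatorics to a finite amount of data, but it is the regularity built into the definition — the frontier being a $1$-manifold — that must be used to guarantee that $\overline U$ is a genuine compact manifold with boundary carrying a finite CW structure extending $\mathcal E$, and matching up those boundary cells is where the real care is needed.
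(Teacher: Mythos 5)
Your reduction to ``finitely many ends'' and your grid decomposition are fine: the equal-or-disjoint lemma for the pieces $E_{i,c}$ is exactly the analytic-continuation argument behind Proposition \ref{prop:unique}, and it does give a finite partition of $U$ into open cells. The genuine gap is your third step. You assert that because the frontier of $U$ is a disjoint union of curves, the closure $\overline U$ in $P$ is a \emph{compact} surface with boundary carrying a finite CW structure extending your cells. Compactness is not part of the definition and can fail: the closure in $P$ of a single open rectangle need not be compact, since part of the boundary of $\dev(R)$ may be reachable only ``at the ideal boundary'' of $P$ (think of $P$ isomorphic to an open round disk and $R$ an inscribed rectangle whose corners escape $P$). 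This is precisely why the proposition immediately preceding this one in the paper has to assume ``with compact closure'' before concluding that the closure of an open rectangular union is a closed rectangular union. Moreover, even when $\overline U$ is compact, the passage from your finite partition of $U$ to a finite CW structure on a manifold-with-boundary $\overline U$, together with the bookkeeping of frontier curves that are slits interior to the closure versus honest boundary circles, is exactly the delicate point --- and your last paragraph concedes that this matching-up is left undone rather than proved. As written, the argument therefore does not establish finiteness of the end space.

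For comparison, the paper never forms closures at all: it reduces the statement to finiteness of the Euler characteristic of $U$ and proves that by induction on the number of rectangles, using inclusion--exclusion together with the observation (again by the continuation argument you used) that the intersection of two rectangles in a planar surface is empty or a single rectangle. Your step 2 could be salvaged in the same closure-free spirit: a finite partition into open cells already bounds a combinatorial Euler characteristic of $U$ (count cells with signs), and finiteness of that invariant rules out infinitely many ends for a genus-zero surface; alternatively, just run the paper's induction. Either way, the compactness claim about $\overline U$ should be removed rather than repaired, since it is false in general.
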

\begin{proof}
By definition, an open rectangular union is a connected surface in a topological disk. So, it is homeomorphic to a punctured disk, but the number of punctures may be infinite.
So, it suffices to  prove that an open rectangular union has finite Euler characteristic.

We will show that a {\em union of open rectangles}, i.e., an arbitrary union of open rectangles in a planar surface,
has finite Euler characteristic. We define the {\em complexity}
of such a union to be the smallest number of rectangles necessary to write the set as a union of rectangles.
In fact, we will prove that if a union of rectangles $U$ has complexity less than $n$, then
$|\chi(U)|<2^n$. For $n=0$, $\chi(U)=0$, and for $n=1$, $\chi(U)=1$. These provide a base case, and 
we proceed by induction in $n$. Suppose the statement $|\chi(U)|<2^n$ for all $U$ of complexity $n$. Let $U'$ be a union of complexity $n+1$. Then $U'=U \cup R$, where
$U$ has complexity $n$ and $R$ is another open rectangle. 
By the inclusion-exclusion principle,
\begin{equation}
\label{eq:e i}
\chi(U \cup R)=\chi(U)+\chi(R)-\chi(U \cap R)=\chi(U)-\chi(U \cap R)+1.
\end{equation}
Note that the intersection of two open rectangles in a planar surface is either empty or another open rectangle. In particular, $U \cap R$ is either empty or a union of rectangles of complexity no more than $n$. By inductive hypothesis, $|\chi(U)|<2^n$
and $|\chi(U \cap R)|<2^n$. So by equation \ref{eq:e i},
$$|\chi(U \cup R)| \leq |\chi(U)|+|\chi(U \cap R)|+1 \leq (2^n-1)+(2^n-1)+1<2^{n+1}.$$
\end{proof}

\subsection{A finiteness condition}
Let $P$ and $Q$ be planar surfaces. Let $A \in \PC(P)$ and $B \in \PC(Q)$. We say $A$ and $B$ are {\em isomorphic}
if $A \imm B$ and $B \imm A$. This defines an equivalence relation on 
$$\PC=\bigcup_{P \in \tM} \PC(P).$$
We note that sets of the form
$\tM_{\imm}(K)$, $\tM_{\emb}(K)$, $\tM_{\not \imm}(U)$ and $\tM_{\not \emb}(U)$ only depend on the isomorphism classes of $K$ and $U$.

We state our main finiteness result for convex sets rather than rectangles
because it will be useful in later sections.
\begin{proposition}
\label{prop:finite}
Let $A_1,\ldots, A_n$ be a collection of convex subsets of the plane 
which are either all open or all closed. There are only finitely many isomorphism classes of $U \in \PC$ which are the
union of sets of the form $\tilde A_i \subset U$ so that 
$A_i=\dev(\tilde A_i)$ for all $i$. 
\end{proposition}
The reason this proposition will be useful is that our topology on $\tM$ was defined in terms of a subbasis, and finite intersections of elements of the subbasis are still open but typically not elements of the subbasis. We eventually be taking intersections of sets of the form $\tM_{\imm}(K)$ and $\tM_{\not \emb}(U)$ over finite collections of isomorphism classes in order to prove Theorem \ref{thm:open}.

\begin{proof}
Fix $A_1,\ldots, A_n$. Let $U=\bigcup_{i=1}^n \tilde A_i$ be such a union. Suppose $U$ lives in the planar surface $P$ with basepoint $o_P$. 
We associate $U$ to two pieces of information. First there is a subset $\sS(U) \subset \{1,\dots,n\}$ consisting of those 
$i$ so that $o_P \in \tilde A_i$.
Also, we can associate to $U$ a subgraph $\sG(U)$ of the complete graph $K_n$ with vertex set $\{1,\ldots, n\}$. We
define this subgraph by the condition that there is an edge between distinct $i ,j \in \{1,\ldots, n\}$ if $\tilde A_i \cap \tilde A_j \neq \emptyset$. 
Assume this intersection is non-empty. By convexity of $A_i$ and $A_j$, it follows that $A_i \cap A_j$ is convex and so connected. Because the lifts
$\dev|_{A_i}^{-1}:A_i \to P$ and $\dev|_{A_j}^{-1}:A_j \to P$ agree at one point, they must agree at all point of $A_i \cap A_j$ by analytic continuation. Therefore, we can recover $U$ up to isomorphism from $\sS(U)$ and $\sG(U)$. Consider the disjoint union
$\bigsqcup_{i} A_i$. Inclusion of each $A_i$ into $\R^2$ gives a natural map $\pi:\bigsqcup_{i} A_i \to \R^2$. Define the equivalence relation $\sim$ on the disjoint union by $p \in A_i$ is equivalent to $q \in A_j$
if $\pi(p)=\pi(q)$ and the edge $\bar {ij}$ lies in $\sG(U)$. 
There is a natural identification between $U$ and
$\bigsqcup_{i} A_i/\sim$ which picks out the isomorphism class of $U$. The collection of points $p \in A_i$ with $i \in \sS(U)$ and 
$\dev(p)=\0$ is an equivalence class of $\bigsqcup_{i} A_i/\sim$. This corresponds to the basepoint of $U$. 

Let $U_1$ and $U_2$ be unions coming from the 
same choices of convex sets $A_1,\ldots, A_n$.
We remark that there is an immersion $\iota:U_1 \to U_2$
if and only if $\sS(U_1) \subset \sS(U_2)$ and 
$\sG(U_1)$ is a subgraph of $\sG(U_2)$. Viewing
$$U_1=\bigsqcup_{i} A_i/\sim_1 \and U_2=\bigsqcup_{i} A_i/\sim_2,$$
we observe that these conditions imply that the identity
map $\bigsqcup_{i} A_i \to \bigsqcup_{i} A_i$ induces
a well defined map from $U_1 \to U_2$. This is the needed immersion.

It follows that the collection of unions satisfying the proposition is finite: There are no more than the number of pairs $\big(\sS(U),\sG(U)\big)$,
where $\sS(U) \subset \{1,\dots,n\}$ and $\sG(U)$ is a subgraph of $K_n$.
\end{proof}

\begin{corollary}
\label{cor:finite immersed}
Let $U$ be an open (resp. closed) subset of a planar surface which
is a finite union of open (resp. closed) convex sets. Then there are only finitely many images of $U$ under immersions up to isomorphism. 
\end{corollary}
\begin{proof}
Let $U=\bigcup_{i=1}^n \tilde A_i$ be a union of convex sets in a planar surface $P$. Let $A_i=\dev(\tilde A_i)$. Given an immersion $\iota:U \imm Q$, we have
$\iota(U)=\bigcup_{i=1}^n \iota(\tilde A_i)$. This writes
$\iota(U)$ as a union of lifts of the sets $A_i \subset \R^2$ for $i=1,\ldots, n$. There are only finitely many possibilities for $\iota(U)$ by Proposition \ref{prop:finite}.
\end{proof}

\begin{corollary}
\label{cor:countably many classes}
\note{Definitely used.}
There are only countably many isomorphism classes of
(open or closed) rational rectangular unions in $\PC$. 
\end{corollary}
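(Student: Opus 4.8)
The plan is to deduce the corollary from Proposition \ref{prop:finite} by a routine countable-union bookkeeping argument. The first step is to observe that the set of rational rectangles in the plane is countable: a closed rational rectangle is determined by a quadruple $(a,b,c,d) \in \Q^4$ with $a<b$ and $c<d$, and likewise for open rational rectangles, so there are only countably many of each type in $\R^2$. Consequently, for each fixed $n \in \N$ the set of $n$-tuples $(A_1,\ldots,A_n)$ of rational (open or closed) rectangles in the plane is countable, being a finite product of countable sets; and the set of all finite tuples of such rectangles, being $\bigcup_{n \in \N}$ of these countable sets, is a countable union of countable sets, hence countable.

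The second step is to invoke the definition of a rational rectangular union. If $U \subset P$ is a rational rectangular union in a planar surface $P$, then by definition $U = \bigcup_{i=1}^n \tilde R_i$ for some finite collection of rational rectangles $\tilde R_i \subset P$. Setting $A_i = \dev(\tilde R_i)$, each $A_i$ is a rational rectangle in the plane, and $U$ is precisely one of the rectangular unions considered in Proposition \ref{prop:finite} for the tuple $(A_1,\ldots,A_n)$. That proposition tells us that, for each fixed tuple $(A_1,\ldots,A_n)$, there are only finitely many isomorphism classes of sets in $\PC$ arising this way.

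Combining the two steps: the collection of isomorphism classes of rational rectangular unions in $\PC$ is contained in the union, taken over all finite tuples of rational plane rectangles, of the finite sets of isomorphism classes supplied by Proposition \ref{prop:finite}. This is a countable union (by the first step) of finite sets (by Proposition \ref{prop:finite}), hence countable, which proves the corollary. (Recall, as noted before Proposition \ref{prop:finite}, that "isomorphism class" is the appropriate equivalence relation here.)

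There is essentially no substantive obstacle; the only points requiring a little care are (i) checking that the definition of "rational" for a rectangular union really forces each constituent rectangle to develop to a rational plane rectangle, so that Proposition \ref{prop:finite} applies, and (ii) noting that Proposition \ref{prop:finite} is stated for a single fixed finite list of plane rectangles, so one must genuinely quantify over all finite lists — which is harmless since the set of all such lists is still countable.
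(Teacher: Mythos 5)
Your proof is correct and is essentially the paper's own argument: countably many finite collections of rational plane rectangles, each yielding only finitely many isomorphism classes by Proposition \ref{prop:finite}, hence countably many classes in total. You have simply spelled out the bookkeeping that the paper leaves implicit.
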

\begin{proof}
This follows from the proposition above, because there are only countably many finite collections of rational rectangles in the plane.
\end{proof}

\subsection{Rectilinear curves}

We will say a closed curve $\gamma:\R/L\Z \to \R^2$ is {\em rectilinear} if there are $0 = t_0<t_1<\ldots<t_{2k}=L$ so that the derivative satisfies
$$\gamma'(t) = \begin{cases}
(\pm 1,0) & \text{if $t_j<t<t_{j+1}$ with $j$ even,} \\
(0,\pm 1) & \text{if $t_j<t<t_{j+1}$ with $j$ odd.} \\
\end{cases}$$
We say the rectilinear curve $\gamma$ is {\em rational}
if the points $\gamma(t_j)$ are rational.

Our topology on $\tM$ was defined using a subbasis consisting of elements of the form $\tM_{\imm}(K)$ and
$\tM_{\not \emb}(U)$ where $K$ and $U$ are disks, and so we will need to study rectangular unions which are also topological disks. Such disks are bounded by rectilinear curves, and we will use an understanding of rectilinear curves to deduce certain finiteness results. 
\begin{lemma}
\label{lem:2}
\note{Used by Corollary \ref{cor:finite open disks}.}
Let $\gamma$ be a closed immersed rectilinear curve in $\R^2$. Then, lifts of
$\gamma$ to simple closed curves in planar surfaces bound at most finitely many isomorphism classes of disks,
and each such disk is a rectangular union. 
Furthermore, if $\gamma$ is rational then so is each
rectangular union.
\end{lemma}

\begin{proof}
Consider the rectilinear curve $\gamma$ in $\R^2$. If it bounds an immersed disk, then $\gamma$ can be oriented so that the winding number around any point in the plane is non-negative.
By rectilinearity,
we can divide the bounded components of $\R^2 \smallsetminus \gamma$ into rectangles. Furthermore, if $\gamma$ is rational,
these rectangles can be chosen all to be rational.
Let $\sR$ be the collection of such closed rectangles with multiplicity corresponding to the winding number. 

Each immersed disk bounded by $\gamma$ can be assembled by identifying boundary edges of rectangles in $\sR$. In particular by Proposition \ref{prop:finite},
there are only finitely many immersed disks with boundary $\gamma$. Furthermore, from this construction we see that each such disk is a rectangular union. If $\gamma$ was rational,
then the disk is a rational rectangular union.
\end{proof}

\begin{corollary}\label{cor:smallest}
\note{Used in Theorem \ref{thm:nested}.}
Let $P$ be a planar surface and let $K \in \PC(P)$ be a closed rectangular union. Then, there is a smallest $D \in \cdisk(P)$ so that $K \subset D$. Furthermore,
$D$ is a rectangular union. 
\end{corollary}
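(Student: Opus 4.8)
The plan is to produce $D$ as the image of $K$ under the minimal immersion that makes $K$ into a disk, i.e.\ to take $D$ to be a copy of $K$ together with enough attached cells to fill in all the ``holes'' of $K$, and then to show this $D$ is the $\subseteq$-least element of $\cdisk(P)$ containing $K$. First I would recall from Proposition \ref{prop:open disk} that $K^\circ$ is homeomorphic to a finitely punctured disk, so $K$ itself has finitely many complementary ``holes'' in the topological disk $P$; each such hole is a closed subset of $P$ whose boundary is a simple closed curve lying in $\partial K$, hence mapping under $\dev|_P$ to a rectilinear curve. By Lemma \ref{lem:bounds}, each such boundary curve bounds a rectangular union homeomorphic to a disk inside $P$; filling in each hole of $K$ with the portion of $P$ it cuts off, we obtain a set $D \subset P$ which is a closed rectangular union, is homeomorphic to a closed disk (we have capped off every puncture), and contains $o_P$ in its interior (since $o_P \in K^\circ$ already). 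Thus $D \in \cdisk(P)$, $K \subset D$, and $D$ is a rectangular union.

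Next I would verify minimality. Suppose $D' \in \cdisk(P)$ with $K \subset D'$. I claim $D \subset D'$. Since $D$ was obtained from $K$ by adjoining, for each complementary hole $H$ of $K$ in $P$, the region of $P$ that $H$ separates from $o_P$; concretely, $D = P \smallsetminus (\text{union of the unbounded/outer complementary components of } K)$, where ``outer'' means the complementary component of $K$ in $P$ whose closure is noncompact (equivalently, the one ``reaching'' $\partial P$). Because $D'$ is homeomorphic to a closed disk containing $K$, its complement $P \smallsetminus D'$ is connected (it is the outer region), and $P \smallsetminus D'$ is disjoint from $K$, hence contained in $P \smallsetminus K$; being connected and noncompact it must lie inside the unique noncompact complementary component of $K$. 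Therefore $P \smallsetminus D' \subseteq P \smallsetminus D$, i.e.\ $D \subseteq D'$, as claimed. This gives the smallest element.

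The main obstacle I expect is the topological bookkeeping in identifying $D$ correctly: one must be careful that the ``holes'' of $K$ in $P$ are exactly the bounded complementary components (in the sense of being cut off from $\partial P$), that each is genuinely a topological disk once filled — which is where Schoenflies via Corollary \ref{cor:Shoenflies} and Proposition \ref{prop:closed disk family} enter, letting us model $P$ on $\R^2$ and apply Lemma \ref{lem:bounds} to each boundary curve — and that capping all of them simultaneously yields a set homeomorphic to a closed disk rather than something with extra topology. A secondary point requiring care is that the boundary simple closed curves of the complementary components really are rectilinear after developing; this follows because $\partial K$, being the boundary of a rectangular union, is a disjoint union of rectilinear closed curves, and each complementary component's boundary is a union of such arcs forming a simple closed curve, so Lemma \ref{lem:bounds} applies to produce a rational rectangular union when $K$ is rational. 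Once these topological facts are pinned down, the minimality argument is the short complement-is-connected observation above.
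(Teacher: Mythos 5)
Your proposal is correct and takes essentially the same route as the paper: in both arguments $D$ is $P$ minus the unbounded complementary component $A$ of $K$, and the key input is Lemma \ref{lem:bounds} applied to a rectilinear boundary curve. The only difference is that the paper applies that lemma once, to the single outer curve $\partial A$, which yields $D$ directly and avoids your hole-by-hole capping and the extra Schoenflies step showing the assembled set is a disk; your explicit minimality verification is a detail the paper leaves implicit.
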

\begin{proof}
By definition of rectangular union, $\partial K$ is a union of disjoint simple closed curves, each of which bounds a disk in $P$. Let $A$ be the unique unbounded component of $P \smallsetminus K$. Then $A$ can only touch one boundary component of $P$, so $A$ is homeomorphic to an annulus, and $\partial A \subset P$ consists of this one component.
The developed image
$\dev(\partial A)$ is a rectilinear curve. So, it bounds
a rectangular union in $\cdisk(P)$ by the lemma above.
\end{proof}

If $K \in \PC(P)$ is a closed rectangular union,
then we call the disk $D$ provided by the corollary the {\em smallest closed disk containing $K$}. Similarly, any open set $U$ in a planar surface which is homeomorphic to a finitely punctured disk is contained in a {\em smallest open disk} obtained by filling in the compact components of the compliment.

\begin{corollary}
\label{cor:finite open disks}
\note{used in the proof of Theorem \ref{thm:open}.}
Let $P$ be a planar surface and $U \subset P$ be an open subset containing the basepoint and homeomorphic to a finitely punctured disk. If $Q \in \tM$ and there is an embedding $e:U \emb Q$, let $D(Q) \subset Q$ denote the smallest open disk containing $e(U)$. Then the set 
$$\{D(Q):~\text{$Q \in \tM$ and $U \emb Q$}\}$$
contains only finitely many different isomorphism classes. 
\end{corollary}
\begin{proof}
Orient the boundary components of $U$ so that traveling around $\partial U$ leaves $U$ on the left. For each boundary component choose a smooth simple closed curve in $U$ homotopic to the boundary component with the same orientation. This allows us to distinguish the boundary component which does not bound a compact subset of $P$, namely the one whose associated curve has turning number $1$. (The other curves have turning number $-1$ since they bound a disk with the wrong orientation.)

Now we can choose a closed curve $\tilde \gamma$ in $U$ which is homotopic to the distinguished boundary component so that $\gamma=\Dev|_P \circ \tilde \gamma$ is a rectilinear curve. Let $\sV \subset \tM$ denote the collection isomorphism classes of planar surfaces bounded by lifts of $\gamma$. The set $\sV$ finite by Lemma \ref{lem:2}.

Now suppose that $R$ is a planar surface and there is an embedding $e:U \emb R$. Let $V$ be the open disk bounded by $e(\tilde \gamma)$. 
The isomorphism class of $V$ lies in $\sV$.
The set $e(U) \cup V$ is the smallest open disk containing $e(U)$. 
Furthermore this disk is uniquely determined by the isomorphism class $V$.
To see this suppose that $R'$ is another planar surface,
$e': U \emb R'$ is an embedding, $V'$ is the open disk bounded by $e(\tilde \gamma)$ and $V$ and $V'$ are isomorphic. Let $f:V \to V'$ denote this isomorphism. Define the map 
$$g:e(U) \cup V \to e'(U) \cup V'; \quad x \mapsto \begin{cases}
e' \circ e^{-1}(x) & \text{if $x \in e(U)$}\\
f(x) & \text{if $x \in V$}.\end{cases}$$
This map is ambiguous on $V \cap e(U)$, but this set is a path-connected open set and the maps $e' \circ e^{-1}$ and $f$ restricted to this set are both embeddings so they agree. Furthermore, $g$ is an immersion because it is an immersion locally. We similarly get a map from $e'(U) \cup V' \to e(U) \cup V$, showing that these two sets are isomorphic. This proves that the isomorphism class of the smallest open disk containing an embedded image of $U$ is uniquely determined by $V$ as defined above and taken from the finite set $\sV$.
\end{proof}

\subsection{Constructing rectangular unions}

\begin{theorem}
\label{thm:nested}
\note{Definitely used.}
Let $P$ be a planar surface, and let $K_1, U \in \PC(P)$ with $K_1$ compact, $U$ open and $K_1 \subset U$.
Then, there is a closed rational rectangular union 
$K_2 \in \PC(P)$ so that $K_1 \subset K_2^\circ$ and
$K_2 \subset U$. Furthermore, if $U$ is homeomorphic to an open disk then we can arrange that $K_2 \in \cdisk(P)$.
\end{theorem}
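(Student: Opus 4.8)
The plan is to cover $K_1$ by small open rational rectangles that fit inside $K_3^\circ$, and then tidy up the union into a disk when $K_3$ is one. First I would exploit compactness: around each point $p \in K_1$ choose an open Euclidean metric ball $V_p \subset P$ that (i) is isometrically embedded (of radius at most $\ER(P,p)$, using Proposition \ref{prop:Lipschitz} to get a uniform lower bound $\rho = \ER(P,K_1)/3 > 0$ on the allowed radii) and (ii) has closure contained in $K_3^\circ$ (possible since $K_1 \subset K_3^\circ$ and $K_3^\circ$ is open, again using a Lebesgue-number argument with the compact set $K_1$). Inside each such ball, the developing map is a homeomorphism onto a Euclidean ball, so I can pick a small \emph{rational} closed rectangle $R_p \subset V_p$ with $p$ in its interior; because developed coordinates of $V_p$ form an open subset of $\R^2$ and rationals are dense, such a rational rectangle exists. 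By compactness of $K_1$, finitely many of the interiors $R_{p_1}^\circ, \dots, R_{p_m}^\circ$ cover $K_1$.

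Next I would assemble $K_2$ from these rectangles. Let $W = \bigcup_{i=1}^m R_{p_i}$. This is a finite union of rational rectangles in $P$ with $K_1 \subset W^\circ$ and $W \subset K_3^\circ$. The one subtlety is that $W$ need not be connected or have nice boundary (so it need not literally be a ``rectangular union'' in the paper's sense), and it need not contain the basepoint $o_P$. To fix connectivity and the basepoint: $K_1 \in \PC(P)$ is path-connected and contains $o_P$, so $K_1$ lies in a single connected component $W_0$ of $W^\circ$; I would then add finitely many further rational rectangles along paths to absorb $o_P$ into the same component and to fill in any pinch points, arguing exactly as in the proof of Proposition preceding Proposition \ref{prop:open disk} (the closure-of-rectangular-union argument), so that after taking the connected component and a closed-rectangular-union hull we obtain $K_2 \in \PC(P)$, a closed rational rectangular union, with $K_1 \subset K_2^\circ$ and $K_2 \subset K_3^\circ$.

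Finally, for the case $K_3 \in \cdisk(P)$: here I want $K_2$ to be a closed disk. By Corollary \ref{cor:smallest}, any closed rational rectangular union $K \in \PC(P)$ is contained in a \emph{smallest} closed disk $D \in \cdisk(P)$, which is itself a rectangular union. Applying this to the $K_2$ just built, I get $D$ with $K_2 \subset D$, hence $K_1 \subset K_2^\circ \subset D^\circ$. The remaining issue is $D \subset K_3^\circ$: this is not automatic, since enlarging $K_2$ to $D$ could push it outside $K_3$. To handle it I would instead run the construction more carefully — first apply the non-disk case with $K_3$ replaced by a \emph{slightly shrunk} disk $K_3' \in \cdisk(P)$ satisfying $K_1 \subset {K_3'}^\circ$ and $K_3' \subset K_3^\circ$ (such a $K_3'$ exists from a closed disk family for $K_3$, Proposition \ref{prop:closed disk family}, item \ref{item:intersection}, choosing $t$ slightly less than $1$ but large enough that $K_1 \subset {K_3'}^\circ$, using compactness of $K_1$), obtaining a rational rectangular union $K_2 \subset {K_3'}^\circ$, and then take its smallest enclosing disk $D$. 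Since the unbounded complementary component of $K_3'$ in $P$ contains $D$'s exterior push only up to $\partial K_3'$, one checks $D \subset K_3' \subset K_3^\circ$; set $K_2 := D$.

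I expect the main obstacle to be the disk case: ensuring the smallest enclosing rectangular-union disk $D$ of the rational rectangular union stays inside $K_3^\circ$, which is why I would interpose the shrunk disk $K_3'$ and lean on the monotonicity built into the closed disk family together with Corollary \ref{cor:smallest}. The purely local steps (finding rational rectangles inside isometric balls, uniform radius bounds from the $1$-Lipschitz embedding radius, patching into a connected rectangular union) are routine given the machinery already developed in Section \ref{sect:rectangular unions}.
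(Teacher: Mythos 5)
Your proposal is correct and follows essentially the same route as the paper: cover $K_1$ by small closed rational rectangles contained in $K_3^\circ$, pass to a finite subcover, repair the boundary so the union is a genuine rectangular union, and in the disk case replace it by the smallest enclosing closed disk from Corollary \ref{cor:smallest}. The one deviation, your interposed shrunk disk $K_3'$, is unnecessary: since $P \smallsetminus K_3^\circ$ is connected and noncompact, it lies in the unbounded complementary component of $K_2$, so the smallest enclosing disk automatically remains inside $K_3^\circ$ (and your worries about connectivity and the basepoint are likewise moot, as $o_P \in K_1$ and every rectangle meets the path-connected set $K_1$).
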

\begin{proof}
We will deal with the last sentence latter in the proof, for now we will produce a $K_2 \in \PC(P)$.
For every $p \in K_1$, choose a closed rational rectangle $R_p$ 
so that $p \in R_p \subset U$.
Then $\{R_p^\circ~:~p \in K_1\}$ is an open cover of $K_1$. 
By compactness, there is a finite subcover. Let $\{R_1,\ldots, R_n\}$ be the corresponding collection of closed rectangles. Let $K_2 = \bigcup_{i=1}^n R_i$. This set is path connected and contains the  basepoint because $K_1$ does. Also by construction, we have $K_1 \subset K_2^\circ$ and
$K_2 \subset U$. 

It may not be true that $K_2$ is a rectangular union because 
$\partial K_2$ could fail to be bounded by disjoint curves. This can only happen if some rectangles share a common vertex but are situated diagonally across from each other as depicted below:
\begin{center}
\includegraphics{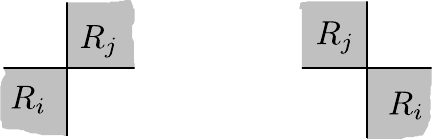}
\end{center}
We can fix this problem, by adding a rectangle centered at the common vertex which is small enough to be contained in $U$ and only intersect the edges of rectangles in the set $\{R_1, \ldots, R_n\}$ in edges which contain
the common vertex.

Now suppose that $U$ is a topological disk. Replacing the $K_2$ constructed above by the smallest closed disk containing $K_2$ gives the last statement. See Corollary \ref{cor:smallest}.
\end{proof}

\subsection{\texorpdfstring{Open sets in $\tM$}{Open sets in M}}
\label{sect:Open sets in M}
In this subsection we prove Theorem \ref{thm:open} namely that sets of the form $\tM_{\imm}(K)$, $\tM_{\emb}(K)$, $\tM_{\not \imm}(U)$ and $\tM_{\not \emb}(U)$ are open in $\tM$. 

\begin{proof}[Proof of Theorem \ref{thm:open}]
We prove Theorem \ref{thm:open} using the definition of the immersive topology. That is, we only assume set of the form
$\tM_{\imm}(K)$ and $\tM_{\not \emb}(U)$ are open when $K$
and $U$ are closed and open topological disks, respectively.

Let $P$ be a planar surface and $K \in \PC(P)$ be compact. We will show that $\tM_{\imm}(K)$ is open. Choose $Q \in \tM_{\imm}(K)$. Then by definition, there is an immersion 
$\iota:K \imm Q$. Let $K_1=\iota(K)$. By choosing a closed disk family in $Q$, we can find a $K_3 \in \cdisk(Q)$ so that
$K_1 \subset K_3^\circ$. Then Theorem \ref{thm:nested}
guarantees that there is a closed rational rectangular union $K_2 \in \cdisk(Q)$ so that $K_1 \subset K_2^\circ$ and $K_2 \subset K_3^\circ$. Since $\tM_{\imm}(K_2)$ is open, it suffices to prove that 
$\tM_{\imm}(K_2) \subset \tM_{\imm}(K)$. Let $R \in \tM_{\imm}(K_2)$. Then, there is an immersion $j:K_2 \imm R$. The composition $j \circ \iota:K \imm R$ is the immersion needed
to prove that $R \in \tM_{\imm}(K)$. 

Let $P$ be a planar surface and $U \in \PC(P)$ be open. We will show that $\tM_{\not \emb}(U)$ is open. Choose $Q \in \tM_{\not \emb}(U)$. Choose an exhaustion of $U$ by an increasing sequence of compact sets $D_n \in \PC(P)$ whose interiors $D_n^\circ$ lie in $\PC(P)$ so that each $D_n \subset D_{n+1}^\circ$.  
By Proposition \ref{prop:supremum}, if each $D_n^\circ$ embeds in $Q$, then $U$ would embed in $Q$. 
So there is an $a>0$ so that $D_a \not \emb Q$. By Theorem \ref{thm:nested},
there is a closed rectangular union $K_2$ so that
$D_a \subset K_2^\circ$ and $K_2 \subset D_{a+1}^\circ$.
Because $D_a^\circ \subset K_2^\circ \subset U$, we have
$$Q \in \tM_{\not \emb}(D_a^\circ) \subset \tM_{\not \emb}(K_2^\circ) \subset \tM_{\not \emb}(U).$$
It suffices to show that $\tM_{\not \emb}(K_2^\circ)$ is open.
Let $\sD$ be the collection of all isomorphism classes
of smallest open disks containing embedded images of $K_2^\circ$. 
This set is finite by Proposition \ref{prop:open disk} together with Corollary 
\ref{cor:finite open disks}. If
$K_2^\circ$ embeds in a planar surface $R$, then there
is an element $D \in \sD$ which also embeds. It follows that
$$\tM_{\not \emb}(K_2^\circ)=\bigcap_{D \in \sD} \tM_{\not \emb}(D),$$
which is open by definition of the topology.

Let $P$ be a planar surface and $U \in \PC(P)$ be open. We will now show that $\tM_{\not \imm}(U)$ is open.
Choose $Q \in \tM_{\not \imm}(U)$. By the same reasoning as above, we can find an closed rectangular union $K_2 \in \cdisk(P)$
so that $K_2^\circ \subset U$ and $K_2^\circ \not \imm Q$. 
Furthermore, $\tM_{\not \imm}(K_2^\circ) \subset \tM_{\not \imm}(U).$
Let $\sV$ be the collection of all immersed images of $K_2^\circ$.
The set $\sV$ is finite by Corollary \ref{cor:finite immersed}. From the above paragraph, we know that $\tM_{\not \emb}(V)$ is open for every $V \in \sV$. Thus,
$$\tM_{\not \imm}(K_2^\circ)=\bigcap_{V \in \sV} \tM_{\not \emb}(V)$$
is open.

Finally, let $P$ be a planar surface and $K \in \PC(P)$ be compact. We will show that $\tM_{\emb}(K)$ is open. 
Choose
any $Q \in \tM_{\emb}(K)$. Then there is an embedding $e:K \emb Q$. Let $K_1=e(K)$. Choose $K_3 \in \cdisk(Q)$ so that
$K_1 \subset K_3^\circ$. Then we can find a $K_2 \in \cdisk(Q)$ which is a closed rectangular union
and satisfies $K_1 \subset K_2^\circ$ and $K_2 \subset K_3^\circ$. 
Let $\sL$ be the collection of all immersed images of $K_2^\circ$ up to isomorphism. This collection is finite by Corollary \ref{cor:finite immersed}. Let $\sL_{0}$ be $\sL$ with the equivalence class of $K_2^\circ$ itself removed. Then if $L \in \sL_0$, the immersion $K_2^\circ \imm L$ is not an embedding.
Let $R$ be a planar surface. Suppose that there is an immersion $\iota:K_2 \imm R$
and that for each $L \in \sL_0$, we have $L \not \emb R$.
Note that by restriction of $\iota$, we have $K_2^\circ \imm R$. Then by definition of $\sL_0$, our immersion 
$K_2^\circ \imm R$ must actually be an embedding. So, by restriction, $\iota|_{K_1}:K_1 \emb R$, and by composition 
$\iota \circ e:K \emb R$. It follows that 
$$Q \in \tM_{\imm}(K_2) \cap \bigcap_{L \in \sL_0} \tM_{\not \emb}(L) \subset \tM_{\emb}(K).$$
This provides an open neighborhood of $Q$ contained in $\tM_{\emb}(K)$. 
\end{proof}

\subsection{Countable subbases}

\begin{proposition}[Explicit second-countability]
\label{prop:second countability}
The subsets of the following two forms give a countable subbasis for the topology on $\tM$:
\begin{itemize}
\item Sets of the form $\tM_{\imm}(K)$ where $K \in PC$
is an isomorphism class of a closed rational rectangular union.
\item Sets of the form $\tM_{\not \emb}(U)$ where $U\in PC$
is an isomorphism class of an open rational rectangular union.
\end{itemize}
\end{proposition}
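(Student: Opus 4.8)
The plan is to let $\sB$ denote the proposed collection of sets, $\sT'$ the topology it generates, and $\sT$ the immersive topology, and to prove both that $\sB$ is countable and that $\sT'=\sT$. Countability is immediate from Corollary~\ref{cor:countably many classes}: there are only countably many isomorphism classes of (open or closed) rational rectangular unions, and $\sB$ contains exactly one set $\tM_{\imm}(K)$ or $\tM_{\not\emb}(U)$ for each such class. The inclusion $\sT'\subseteq\sT$ is equally quick: a closed rational rectangular union $K$ is compact and an open rational rectangular union $U$ is open, so $\tM_{\imm}(K)$ and $\tM_{\not\emb}(U)$ are $\sT$-open by Theorem~\ref{thm:open}.

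The substance is the reverse inclusion $\sT\subseteq\sT'$, and for this I would re-run the relevant steps of the proof of Theorem~\ref{thm:open}, noting that the rectangular unions it produces may be taken rational. It suffices to show that each element of the defining subbasis of $\sT$ is $\sT'$-open, i.e.\ that every point of it has a $\sB$-subbasic neighborhood contained in it. First take $\tM_{\imm}(K)$ with $K\in\cdisk(P)$ and a point $Q$ in it, with immersion $\iota:K\imm Q$; using a closed disk family for $Q$ (Proposition~\ref{prop:closed disk family}) choose $K_3\in\cdisk(Q)$ with $\iota(K)\subset K_3^\circ$, and apply Theorem~\ref{thm:nested} to get a closed rational rectangular union $K_2$ with $\iota(K)\subset K_2^\circ$ and $K_2\subset K_3^\circ$. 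Then $Q\in\tM_{\imm}(K_2)\in\sB$, while $\tM_{\imm}(K_2)\subseteq\tM_{\imm}(K)$ since $\iota$ realizes $K\imm K_2$ and immersions compose. Next take $\tM_{\not\emb}(U)$ with $U\in\disk(P)$ and a point $Q$ in it; viewing $U$ as a planar surface, choose from a closed disk family for $U$ an increasing sequence $D_1^\circ\subset D_2^\circ\subset\cdots$ with union $U$, so that by the suprema statement of Proposition~\ref{prop:embedding} some $D_a^\circ\not\emb Q$, and apply Theorem~\ref{thm:nested} to obtain a closed rational rectangular union $K_2$ with $D_a\subset K_2^\circ$ and $K_2\subset D_{a+1}^\circ\subset U$. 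Since the construction expressing the interior of a closed rational rectangular union as an open rectangular union adds only rational rectangles, $K_2^\circ$ is an open rational rectangular union, hence $\tM_{\not\emb}(K_2^\circ)\in\sB$; and $Q\in\tM_{\not\emb}(K_2^\circ)\subseteq\tM_{\not\emb}(U)$ because $D_a^\circ\subset K_2^\circ\subset U$. This yields $\sT\subseteq\sT'$ and hence $\sT=\sT'$.

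The step requiring care is purely bookkeeping: there is no new idea beyond the proof of Theorem~\ref{thm:open}, and the only thing to verify is that everything stays rational — Theorem~\ref{thm:nested} already delivers rational rectangular unions, and one must observe that passing to the interior of a closed rational rectangular union remains in the rational class. The mildly delicate case is the open-disk subbasic set $\tM_{\not\emb}(U)$, where $U$ must first be shrunk to a compact rational rectangular union via a closed disk family and Proposition~\ref{prop:embedding} before Theorem~\ref{thm:nested} can be invoked, exactly mirroring the corresponding step in the proof of Theorem~\ref{thm:open}.
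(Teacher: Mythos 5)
Your proof is correct and follows essentially the same route as the paper: countability via Corollary \ref{cor:countably many classes}, openness of the listed sets via Theorem \ref{thm:open}, and then, for each defining subbasic set of the immersive topology, the closed-disk-family plus Theorem \ref{thm:nested} construction of a rational rectangular union neighborhood, exactly as in the paper's argument. Your explicit remark that the interior of a closed rational rectangular union is again a rational (open) rectangular union is a small point the paper leaves implicit, but it is not a different approach.
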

\begin{proof}
Countability follows from Corollary \ref{cor:countably many classes}.

The sets listed are clearly open by Theorem \ref{thm:open}.
We must prove that they form a subbasis for the topology. 
We will show that the sets in the subbasis used to define
the immersive topology in Definition \ref{def:immersive topology on M}
are open in the topology $\sT'$ generated by the sets listed in this proposition. 

Let $P$ be a planar surface and let $K \in \cdisk(P)$. We will show $\tM_{\imm}(K)$ is open in $\sT'$. Let $Q \in \tM_{\imm}(K)$. Then there is an immersion $\iota:K \imm Q$. Let $K_1=\iota(K)$. Using a closed disk family, we can find a $K_3 \in \cdisk(Q)$ so that $K_1 \subset K_3^\circ$. 
Then, Theorem \ref{thm:nested}
guarantees that there is a closed rational rectangular union $K_2 \in \cdisk(Q)$ so that $K_1 \subset K_2^\circ$ and $K_2 \subset K_3^\circ$. Observe that 
$Q \in \tM_{\imm}(K_2)$ and $\tM_{\imm}(K_2) \subset \tM_{\imm}(K).$
It follows that $\tM_{\imm}(K)$ is open in $\sT'$.

Let $P$ be a planar surface and let $U \in \cdisk(P)$. We will show that $\tM_{\not \emb}(U)$ is open in $\sT'$. Let $Q \in \tM_{\not \emb}(U)$. Then $U \not \emb Q$. Since $U$ is an open disk, we can think of it as a planar surface. Choose a closed disk family $\{K_t~:~t>0\}$ for $U$. By Proposition \ref{prop:supremum}, there is an $a>0$ so that $K_a^\circ \not \emb Q$. Let $K \in \cdisk(P)$ be a rectangular union
satisfying $K_a \subset K^\circ$ and $K \subset K_{a+1}^\circ$. We have $Q \in \tM_{\not \emb}(K^\circ)$ and
$\tM_{\not \emb}(K^\circ) \subset \tM_{\not \emb}(U)$. So,
$\tM_{\not \emb}(U)$ is open in $\sT'$.
\end{proof}

\begin{proposition}[Explicit second-countability of $\tE$]
\label{prop:second countability2}
A countable subbasis for the immersion topology on 
$\tE$ is given by the union of the
collection of preimages under $\tpi$ of the subbasis provided by Proposition 
\ref{prop:second countability} together with the collection sets of the form 
$\tE_\imm(K,U)$ where $K \in PC$
is an isomorphism class of a closed rational rectangular union and $U \subset K^\circ$ is an open rational rectangle.
\end{proposition}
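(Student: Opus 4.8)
The plan is to verify three things about the proposed collection $\sB$: that it is countable, that every member of it is open in the immersive topology on $\tE$, and that it generates that topology. The first two give that the topology generated by $\sB$ is coarser than the immersive topology, the third gives the reverse containment, and together with countability this is exactly the assertion that $\sB$ is a countable subbasis.

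\emph{Countability and openness.} The first family in $\sB$ is $\tpi^{-1}$ applied to the countable subbasis for $\tM$ furnished by Proposition \ref{prop:second countability}, hence countable; its members are open because $\tpi$ is continuous and the sets $\tM_{\imm}(K)$, $\tM_{\not\emb}(U)$ are open in $\tM$ by Theorem \ref{thm:open}. For the second family I would first record that $\tE_+(K,U)$ depends only on the isomorphism class of the pair $(K,U)$: precomposing an immersion out of $K$ with an isomorphism $\psi^{-1}$ gives an immersion out of $\psi(K)$, so $\tE_+(K,U)=\tE_+\big(\psi(K),\psi(U)\big)$. By Corollary \ref{cor:countably many classes} there are countably many isomorphism classes of closed rational rectangular unions; after fixing a representative $K$ of each, an open rational rectangle $U\subset K^\circ$ is pinned down by its image under $\dev$ (a rational rectangle in the plane, of which there are countably many) together with a choice of lift, of which there are only finitely many since $\dev$ is finite-to-one on a rectangular union. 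So the second family is countable as well, and each $\tE_+(K,U)$ in it is open by the very definition of the immersive topology on $\tE$.

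\emph{Generation.} A subbasis for the immersive topology on $\tE$ is $\{\tpi^{-1}(W): W\text{ open in }\tM\}$ together with all sets $\tE_+(K,U)$ with $K\in\PC(P)$ compact and $U\subset K^\circ$ open, so it suffices to show each such set lies in the topology generated by $\sB$. For $\tpi^{-1}(W)$ this is immediate, since $W$ is built from the $\tM$-subbasis of Proposition \ref{prop:second countability} by unions and finite intersections and $\tpi^{-1}$ commutes with both. For a general $\tE_+(K,U)$ I would argue locally: given $(Q,q)\in\tE_+(K,U)$ with immersion $\iota:K\imm Q$ and $q\in\iota(U)$, the restriction $\iota|_{K^\circ}$ is a local homeomorphism (it equals $(\dev|_Q)^{-1}\circ\dev|_P$ locally), so $\iota(U)$ is open in $Q$. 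Using a closed disk family in $Q$ pick $K_3\in\cdisk(Q)$ with $\iota(K)\subset K_3^\circ$, then by Theorem \ref{thm:nested} a closed rational rectangular union $K_2\in\cdisk(Q)$ with $\iota(K)\subset K_2^\circ\subset K_2\subset K_3^\circ$. Regarded as a map into $K_2$, $\iota$ is an immersion $K\imm K_2$; since $q\in\iota(U)\subset\iota(K)\subset K_2^\circ$ and $\iota(U)$ is open, choose an open rational rectangle $U_2$ with $q\in U_2\subset\iota(U)\subset K_2^\circ$. The inclusion $K_2\hookrightarrow Q$ is an immersion by Proposition \ref{prop:embedding}(1), under which the image of $U_2$ contains $q$, so $(Q,q)\in\tE_+(K_2,U_2)\in\sB$. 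Finally $\tE_+(K_2,U_2)\subset\tE_+(K,U)$: if $(R,r)\in\tE_+(K_2,U_2)$ via $\iota':K_2\imm R$ with $r\in\iota'(U_2)$, then $\iota'\circ\iota:K\imm R$ and $r\in\iota'(U_2)\subset\iota'\big(\iota(U)\big)=(\iota'\circ\iota)(U)$. This squeezes a member of $\sB$ between $(Q,q)$ and $\tE_+(K,U)$, completing the generation step.

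The proof is essentially bookkeeping on top of Theorems \ref{thm:open} and \ref{thm:nested}; the two places that need genuine care are the countability accounting for the $\tE_+$ family — isomorphism-invariance of $\tE_+(K,U)$ and finiteness of the possible lifts of a rational rectangle into a rectangular union — and the "fattening from inside $Q$" move that replaces the given compact datum $K$ by a rational rectangular union $K_2\in\cdisk(Q)$ without leaving $\tE_+(K,U)$. I expect the latter to be the main point to get right, since it is where Theorem \ref{thm:nested} and the openness of $\iota(U)$ are combined.
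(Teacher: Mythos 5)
Your proof is correct and follows essentially the same route as the paper: continuity of $\tpi$ via Proposition \ref{prop:second countability}, and, for each point of a general $\tE_+(K,U)$, an interposed basic set $\tE_+(K_2,U_2)$ produced from a closed disk family, Theorem \ref{thm:nested}, and an open rational rectangle inside $\iota(U)$, with the containment $\tE_+(K_2,U_2)\subset\tE_+(K,U)$ checked by composing immersions. Your write-up is slightly more thorough in two harmless respects—the countability bookkeeping for the $\tE_+$ family and the treatment of arbitrary compact $K\in\PC(P)$ rather than only $K\in\cdisk(P)$—but the underlying argument is the paper's.
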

\begin{proof}
The potential subbasis described is clearly a countable collection of open sets. We must show that it generates the topology. By Proposition \ref{prop:second countability}, the map $\tpi$ is continuous in the generated topology. To conclude the proof, we must show that $\tE_\imm(D,V)$ is open for an arbitrary $D \in \cdisk(P)$ and arbitrary $U \subset D^\circ$ open.
Choose a $(Q,q) \in \tE_\imm(D,V)$. Then there is an
immersion $\iota:D \imm Q$ and $q \in \iota(V)$. 
By taking a closed disk family in $Q$ and applying Theorem \ref{thm:nested}, we can produce a closed rational rectangular 
union $K \in \cdisk(Q)$ so that $\iota(D) \subset K^\circ$.
Also since $q \in \iota(V)$ and $\iota(V)$ is open, we can find an open rational rectangle $U$ so that $q \in U \subset \iota(V)$. Then, $(Q,q) \in \tE_\imm(K,U)$. We also claim that
$\tE_\imm(K,U) \subset \tE_\imm(D,V)$. Suppose $(R,r) \in \tE_\imm(K,U)$. Then, there is an immersion $j:K \imm R$ and
$r \in j(U)$. By composition, we have an immersion
$j \circ \iota:D \imm R$. Furthermore, since $U \subset \iota(V)$, we have $r \in j(U) \subset j \circ \iota(V)$. 
\end{proof}

\section{Sequences}
\label{sect:sequences}
We have showed that the topologies on $\tM$ and $\tE$ are second-countable. We recall that a map $f:X \to Y$ between second-countable spaces is continuous if and only if it is sequentially continuous; see \cite[Theorem 30.1]{Munkres}. Therefore, we will begin to use sequences to verify the continuity of maps. We use this section to describe criteria for convergence
and consequences of convergence in $\tM$ and $\tE$.

\begin{proposition}[Criterion for convergence in $\tM$.]
\label{prop:conv}
Let $P \in \tM$ be a planar surface
and let $\langle P_n \rangle_{n \in \N}$ be a sequence of
planar surfaces. 
Suppose the following two statements hold:
\begin{enumerate}
\item[(A)] If $D \in \cdisk(P)$, then $D \imm P_n$ for all but finitely many $n$. 
\item[(B)] If $Q$ is a planar surface, and
$Q \emb P_n$ for infinitely many $n$, then $Q \imm P$.
\end{enumerate}
Then, $\langle P_n \rangle$ converges to $P$ in the immersive topology on $\tM$.
\end{proposition}

\begin{example}
\label{ex: convergence}
Let $P_n$ be the universal cover of $\C_n \smallsetminus R_n$, where $R_n$ denotes the set $n$-th roots of unity. Then $P_n$ naturally has a translation structure obtained by pullback under the covering map. We will observe that $P_n$ tends to the open unit disk $P$ in the immersive topology. We will check statements of Proposition \ref{prop:conv}. 
First of all observe that there are embeddings $\epsilon_n:P \imm P_n$ for all $n$. By restricting this embedding we see any closed disk in $P$ immerses in every $P_n$. To see (B) suppose $Q$ is a planar surface which embeds in infinitely many $P_n$. Then,
$Q$ can be viewed as a subset of some $P_n$. It suffices to show that $Q$ is entirely contained in the image $\epsilon_n(P)$, since in this case
$\epsilon_n^{-1}$ restricted to $Q$ is an embedding into $P$. Suppose $Q$ is not contained in $\epsilon_n(P)$, we see $Q$ intersects the boundary of $\epsilon_n(P)$. Let $\ell$ be the arc length of this interval of intersection viewed as a subset of the unit circle. Choose $M \in \N$ so that for $m>M$, the roots of unity separate the unit circle into intervals of length less than $\ell$. Then for $m>M$ some root of unity lies in the immersed image of $Q$ in the plane. Therefore, $Q \not \imm \C_m \smallsetminus R_m$ and thus $Q \not \imm P_m$ for $m>M$. This contradicts the assumption that $Q$ embeds in infinitely many $P_m$. 
\end{example}

The following is a direct consequence of Theorem \ref{thm:open}, so we will not include a proof.

\begin{corollary}[Necessary conditions for convergence in $\tM$.]
\label{cor:necessary M}
Suppose $\langle P_n \in \tM \rangle_{n \in \N}$ is a sequence of
planar surfaces converging to $P \in \tM$. Then, the following two statements are satisfied:
\begin{enumerate}
\item[(A')] Suppose $K \in \PC$ is compact. 
Then $K \imm P$ implies $K \imm P_n$ for $n$ sufficiently large,
And, $K \emb P$ implies $K \emb P_n$ for $n$ sufficiently large.
\item[(B')] Suppose $U \in \PC$ is open. Then $U \imm P_n$ for infinitely many $n$
implies $U \imm P$. And, $U \emb P_n$ for infinitely many $n$ implies $U \emb P$. 
\end{enumerate}
\end{corollary}

Recall the definition of $\tE_\imm(K,U)$ in \eqref{eq:E+}.
The following provides a criterion for convergence in $\tE$.
\begin{proposition}[Convergence in $\tE$]
\label{prop:convergence 2}
Suppose $P_n \in \tM$ is a sequence in converging to $P \in \tM$ in the immersive topology.
Let $d_n$ denote the Euclidean path metric on $P$. 
Let $p_n \in P_n$ and $p \in P$ be a choice of points on these surfaces.
Then the following are equivalent:
\begin{enumerate}
\item $(P_n, p_n)$ converges to $(P,p)$ in the immersive topology on $\tE$.
\item There is a compact set $K \in \PC(P)$ which contains $p$ and an $N$ so that there is an immersion
$\iota_n:K \imm P_n$ defined for $n>N$ so that $d_{n}\big(p_n, \iota_n(p)\big) \to 0$ as $n \to \infty$.
\item For every compact set $K \subset \PC(P)$ containing $p$, there is an $N$ and an embedding $e_n:K \emb P_n$
defined for $n>N$ so that $d_{n}\big(p_n, e_n(p)\big) \to 0$ as $n \to \infty$.
\end{enumerate}
\end{proposition}

\begin{example}[Example \ref{ex: convergence} continued]
\label{ex: convergence2}
Let $P_n$, $P$ and $\epsilon_n$ be as in Example \ref{ex: convergence}. Choose $p_n \in P_n$ for all $n$. 
We claim that $(P_n, p_n)$ converges if there is an $N$ such that $p_n \in \epsilon_n(P_n)$ for $n>N$ and
$\epsilon_n^{-1}(p_n)$ converges to some point $p \in P$. In this case $(P_n,p_n) \to (P,p)$. Considering the second statement of Proposition \ref{prop:convergence 2}, it suffices to choose $K$ to be a closed ball about the origin which contains $p$ in its interior, and consider the second statement for a sequence of values of $U$ where $U$ is an open metric ball about $p$ with radii tending to zero. (Using the definition of the immersive topology, it can be observed that in fact we have given a characterization of convergence of $(P_n,p_n)$ to $(P,p)$ for some $p \in P$.)
\end{example}

We note the following consequence.
\begin{corollary}
\label{cor:approximate points}
Suppose that the sequence $P_n \in \tM$ converges to $P \in \tM$. Then for any $p \in P$, there is a an $N$ and a sequence
$p_n \in P_n$ defined for $n>N$ with $\Dev(P_n,p_n)=\Dev(P,p)$ so that $(P_n,p_n)$ converges to $(P,p)$ in $\tE$. 
\end{corollary}

\subsection{Proofs}
\label{sect: proofs of convergence criteria}
The following is the proof of our convergence criterion for $\tM$:

\begin{proof}[Proof of Proposition \ref{prop:conv}]
We will suppose statements (A) and (B) of the Proposition are satisfied and prove that for any closed disk $K$, $P \in \tM_{\imm}(K)$ implies that $P_n \in \tM_{\imm}(K)$
for all but finitely many $n$, and that for any open disk $U$, $P \in \tM_{\not \emb}(U)$ implies 
$P_n \in \tM_{\not \emb}(U)$
for all but finitely many $n$.

Let $K$ be a closed disk in a planar surface $Q$ and suppose that $P \in \tM_{\imm}(K)$.
Then there is an immersion $\iota:K \imm P$. By taking a closed disk family in $P$, we can find
a closed disk $D$ in that family so that $\iota(K) \subset D$. From (A), we get immersions $\iota_n:D \imm P_n$ for
all but finitely many $n$. Whenever $\iota_n$ is defined, the composition
$\iota_n \circ \iota$ is an immersion $K \imm P_n$. This proves $P_n \in \tM_{\imm}(K)$
for all but finitely many $n$.

Now suppose that $U$ is an open disk in a planar surface $Q$ and that $P \in \tM_{\not \emb}(U)$.
Then, $U \not \emb P$. Suppose it is not true that $P_n \in \tM_{\not \emb}(U)$
for all but finitely many $n$. Then there is an increasing sequence of integers $\langle n_k\rangle$
and embeddings $e_k:U \emb P_{n_k}$. Since $U$ is an open disk, it is isomorphic to a planar surface which we abuse notation by also denoting $U$.
So, by (B) applied to $U$, we know that there is an immersion $\iota:U \imm P$. Suppose it is not an embedding.
Then there are points $u,v \in U$ so that $\iota(u)=\iota(v)$. Let $K \subset U$ be a closed disk in $Q$ containing both $u$ and $v$. Then $\iota(K)$ is compact. As in the prior paragraph, we can find a closed disk $D$
so that $\iota(K) \subset D$. Then, by (A) we get immersions $\iota_n:D \imm P_n$ for all but finitely many $n$.
In particular, for sufficiently large $k$, we have $\iota_{n_k} \circ \iota|_K:K \imm P_{n_k}$.
But we also get such an immersion as a restriction of an embedding, $e_k|_K:K \emb P_{n_k}$.
Since immersions are unique, it follows that $e_k|_K=\iota_{n_k} \circ \iota|_K$.
Therefore, $\iota|_K$ must be injective, which contradicts the statement above that $\iota(u)=\iota(v)$.\end{proof}

We now prove our equivalences for convergent sequences in $\tE$:

\begin{proof}[Proof of Proposition \ref{prop:convergence 2}]
As in the statement of the proposition, let $(P_n,p_n) \in \tE$ be a sequence,
and consider convergence to $(P,p) \in \tE$. By hypothesis, we know $P_n \to P$ in $\tM$, and we have a $K \in \cdisk(P)$ so that $p \in K^\circ$ and for any open set $U \subset K^\circ$, $(P_n,p_n) \in \tE_\imm(K,U)$ for all but finitely many $n$. We wish to show $(P_n,p_n) \to (P,p)$ in $\tE$. 

By definition, we have $(P_n,p_n) \to (P,p)$ if $P_n \to P$ (which it does) and
whenever $Q$ be a planar surface, $D \in \cdisk(Q)$, $V \subset D^\circ$ is open
and $(P,p) \in \tE_\imm(D,V)$, then there is an $N$ so that $n>N$ implies $(P_n,p_n) \in \tE_\imm(D,V)$. 

Fix $Q$, $D \subset \cdisk(Q)$, and an open $V \subset D^\circ$ so that $(P,p) \in \tE_\imm(D,V)$. Then by definition, there is an immersion $\iota:D \imm P$
and $p \in \iota(V)$. Select another closed disk $L \in P$ so that
$K \subset L$ and $\iota(D) \subset L$. Since $P_n \to P$, we know that there is an $N_1$ so that $n>N_1$ implies $\exists \iota_n:L \imm P_n$. Let $U=K^\circ \cap \iota(V)$. Then, $p \in U$ and we conclude by hypothesis that $(P_n,p_n) \in \tE_\imm(K,U)$ for $n>N_2$. Now let $n> \max(N_1, N_2)$. We can see
$(P_n,p_n) \in \tE_\imm(D,V)$, because
\begin{enumerate}
\item $\iota_n|_{\iota(D)} \circ \iota$ is an immersion of $D$ into $P_n$.
\item We know $p_n \in \iota_n(U)$ and $U \subset \iota(V)$, thus
$p_n \in \iota_n \circ \iota(V)$.
\end{enumerate}
\end{proof}

\begin{proof}[Proof of Proposition \ref{prop:convergence 2}]
First we show (1) implies (3). Suppose $(P_n,p_n) \to (P,p)$. Fix a $K \in \PC(P)$ containing $p$.
Since $P \to P$ and $K \emb P$,
we know that for $n$ sufficiently large there is an embedding $e_n:K \emb P_n$; see Corollary \ref{cor:necessary M}.
We claim $d_{n}\big(p_n, e_n(p)\big) \to 0$. Choose an $\epsilon>0$. Choose a compact disk $D \subset P$
so that $K \subset D$ and so that $p \in D^\circ$. Then we can choose an open ball $B$ about $p$ of radius less than $\epsilon$
so that $U \subset D^\circ$. Since $(P,p) \in \tE_\imm(D,U)$, we know that there is an $N$ so that $(P_n,p_n) \in \tE_\imm(D,U)$ for $n>N$. But then there is an immersion $\iota_n:D \imm P_n$ so that $p_n \in \iota_n(U)$. But since $U$ is an small ball about 
$p$, we know that this implies that $d_n\big(p_n,\iota_n(p)\big)<\epsilon$. Finally, since immersions
are unique, we know $\iota_n=e_n$ on $K$, so the same holds with $e_n(p)$ replacing $\iota_n(p)$.

Clearly (3) implies (2). We will finish the proof by showing that (2) implies (1). Let $K$, $N$ and $\iota_n$ be as in (2). By definition of the immersive topology, we need to show that for every closed disk $D \in \PC$ and every open $U \subset D^\circ$ so that $(P,p) \in \tE_\imm(D,U)$, we have $(P_n,p_n) \in \tE_\imm(D,U)$ for $n$ sufficiently large. 
Fix $D$ and $U$ and suppose $(P,p) \in \tE_\imm(D,U)$ so that there is an immersion $j:D \imm P$ and $p \in j(U)$.
Since $j(U)$ is open, there is an $\epsilon>0$ so that the open $\epsilon$ ball about $p$ is contained in $j(U)$.
Choose a closed disk $K' \in P$ so large that it contains both $K$ and $j(D)$. Then since $P_n \to P$,
we know that there is an $N'>N$ and immersions $\iota_n':K' \to P_n$ for $n>N'$. Observe that the image
$\iota_n'\circ j(U)$ contains an $\epsilon$-ball about $\iota_n'(p)$ because $\iota_n'$ is a local translation
and $j(U)$ contains such a ball about $p$. By uniqueness of immersions
we know $\iota_n'|_K=\iota_n$, and in particular $\iota_n(p)=\iota_n'(p)$.
By hypothesis, there is an $N''>N'$ so that for $n>N''$ 
we have $d_{n}\big(p_n, \iota_n(p)\big)<\epsilon$
which then implies $p_n \in \iota_n'\circ j(U)$ by prior remarks. 
Observe that when $n>N''$, we have that 
$\iota_n' \circ j|_D$ is an immersion of $D$ into $P_n$ and $p_n \in \iota_n' \circ j(U)$
so that $n>N''$ implies $(P_n,p_n) \in \tE_\imm(D,U)$ as desired.
\end{proof}

\begin{proof}[Proof of Corollary \ref{cor:approximate points}]
Let $K \subset P$ be a closed disk containing $p$ in its interior. Let $\iota_n:K \to P_n$ be immersions guaranteed
to exist for $n$ sufficiently large because $P_n \to P$ and $\tM_{\imm}(K)$ is open. Let $p_n=\iota_n(p)$.
Then we see that by definition that $(P_n,p_n) \in \tE_\imm(K,U)$ whenever $\iota_n$ is defined and when $p$ lies in the open subset $U \subset K^\circ$. The developing map comment is true because $\Dev$ is invariant under immersions.
\end{proof}

\section{Continuity of immersions}
\label{sect:continuity of immersions}
The following explains that immersions and embeddings are jointly continuous in choice of the domain and range, and that the natural domains for these maps are closed.

\begin{theorem}
\label{thm:Continuity of immersions}
The sets 
$$\{(P,Q) \in \tM \times \tM:~ P \imm Q\} \quad \text{and} \quad 
\{(P,Q) \in \tM \times \tM:~ P \emb Q\}$$
are closed in $\tM \times \tM$. 
The function 
$$I:\Big\{\big((P,p), Q\big) \in \tE \times \tM:~P \imm Q\Big\} \to \tE$$
which sends $\big((P,p), Q\big)$ to the image of $p$ under the immersion $P \imm Q$
has a closed domain and is continuous.
\end{theorem}

\begin{proof}[Proof of Theorem \ref{thm:Continuity of immersions}]
We begin with dealing with the first sentence in the case of embeddings. 
Suppose $\{P_n\}$ and $\{Q_n\}$ are sequences of planar surfaces
each of which converges to $P$ and $Q$ respectively
and $P_n \emb Q_n$ for all $n$. We need to show $P \emb Q$. 
It suffices to show that any closed disk $D$ in $P$ embeds in $Q$ by Proposition \ref{prop:supremum}. Let $K \subset P$ be a closed disk containing $D$ in its interior. Observe that $K$ embeds in all but finitely many $P_n$ since $\tM_{\emb}(K)$ is open. 
Then by composition with $P_n \emb Q_n$, $K \emb Q_n$ for infinitely many $n$. Since $\tM_{\not \emb}(K^\circ)$ is open, we see that $K^\circ$ embeds in $Q$, and thus so does $D$. 

For the case of immersions we need rectangular unions. 
Suppose $P_n \to P$, $Q_n \to Q$ and each $P_n \imm Q_n$. We will show $P \imm Q$.
Again it suffices to show that every open disk $D$ with compact closure in $P$ immerses in $Q$. Let $D \subset P$ be such a disk, and let $V \subset P$ be an open rectangular union containing $D$ which is a topological disk with compact closure $\bar V$. Since $\tM_{\emb}( \bar V)$ is open by Theorem \ref{thm:open}, there are embeddings $e_n:\bar V \emb P_n$ for $n$ sufficiently large. By composing with immersions $P_n \imm Q_n$, we see 
$V \imm Q_n$ for all but finitely many $n$. By Corollary \ref{cor:finite immersed}, there are only finitely many such images of $V$ up to isomorphism. 
Let $W$ be an isomorphism class which appears infinitely many times. By Theorem \ref{thm:open},
the set $\tM_{\not \emb}(W)$ is open, so it must be that $W \emb Q$. Note that since $V \imm W$ we have $D \imm W$. Thus $D \imm Q$, which completes the proof that $P \imm Q$. 

We note that the domain of $I$ is the preimage of $\{(P,Q):~P \imm Q\}$
under $\tpi \times \text{id}$ where $\tpi: \tE \to \tM$ is the projection.
Thus the domain of $I$ is closed.

Now let $\big((P_n,p_n), Q_n\big)$ be a sequence in the domain of $I$ which converges to 
$\big((P,p), Q\big) \in \tE \times \tM$. So there are immersions $\iota_n:P_n \imm Q_n$,
and there is an immersion $\iota:P \imm Q$ by remarks above. We need to show that
$\big(Q_n, \iota_n(p_n)\big) \to (Q,\iota(p)\big)$. Choose $K \subset P$ compact and containing $p$ in its interior.
It follows that there is an $\epsilon>0$ so that $K^\circ$ contains an $\epsilon$-ball about $p$.
By Proposition \ref{prop:convergence 2} we know that for $n$ sufficiently large there is an embedding $e_n:K \emb P_n$
and $d_{P_n}\big(p_n, e_n(p)\big) \to 0$ as $n \to \infty$. When this distance is less than $\epsilon$, we know that
\begin{equation}
\label{eq:distance equal}
d_{P_n}\big(e_n^{-1}(p_n), p\big)=d_{P_n}\big(p_n, e_n(p)\big),
\end{equation}
so the quantity at left also tends to zero. Let $K'=\iota(K)$ which is compact. Since we know $Q_n \to Q$, for $n$ sufficiently large there is an embedding $f_n: \iota(K) \emb Q_n$. Observe that $f_n \circ \iota:K \imm Q_n$. Since immersions non-strictly contract distances, by applying this immersion we see
$$d_{Q_n}\big(f_n \circ \iota \circ e_n^{-1}(p_n), f_n \circ \iota(p)\big) \to 0 \quad \text{as $n \to \infty$}.$$
Now observe that both $f_n \circ \iota \circ e_n^{-1}$ and $\iota_n|_{\iota(K)}$ give immersions of $\iota(K) \imm Q_n$
so they are equal. It therefore follows that $d_{Q_n}\big(\iota_n(p_n), f_n \circ \iota(p)\big) \to 0$ as $n \to \infty$.
Then by Proposition \ref{prop:convergence 2} it follows that $\big(Q_n,\iota_n(p_n)\big) \to \big(Q, \iota(p)\big)$ as desired.
\end{proof}

\section{Compact subsets and Metrizability}
\label{sect:limits and compactness}

In this section, we prove the following theorem and establish consequences such as metrizability of $\tM$ and $\tE$.

\begin{theorem}
\label{thm:compactness}
Let $P$ be a planar surface. The set of surfaces
$$\tM \smallsetminus \tM_{\not \imm}(P)=\{Q \in \tM~:~P \imm Q\}$$
is compact.
\end{theorem}

It follows the only way a sequence $\langle P_n \rangle$ of planar surfaces can leave every compact set of $\tM$ is if the radius of largest open Euclidean metric ball we can immerse in $P_n$ centered at the basepoint tends to zero as $n \to \infty$.

We establish two important consequences of this result:

\begin{corollary}[Local compactness]
\label{cor:local compactness}
Both $\tM$ and $\tE$ are locally compact in the sense that every point in these spaces has a compact neighborhood.
\end{corollary}

\begin{corollary}[Metrizability]
\label{cor:metrizability}
The spaces $\tM$ and $\tE$ are metrizable.
\end{corollary}

\subsection{Direct limits}

For the proof of Theorem \ref{thm:compactness}, we need to know that an $\imm$-increasing sequence
converges:
\begin{proposition}[Direct limit]
\label{prop:direct limit}
Suppose $\langle P_n \in \tM \rangle_{n \geq 1}$ is a sequence satisfying
$$P_1 \imm P_2 \imm P_3 \imm \ldots.$$
Then, the sequence converges to $\Fuse \{P_n\}$.
\end{proposition}

\begin{proof}
Let $\langle P_n\rangle_{n \geq 1}$ be a sequence of planar surfaces as stated in the proposition. Then
for each $m,n$ with $m \leq n$, there is an immersion $\iota_{m,n}:P_m \imm P_n$. Let $P_\infty=\Fuse \{P_n\}$. 
By the Fusion Theorem, there are immersions 
$j_n:P_n \to P_\infty$.
To prove that the sequence $\langle P_n\rangle$ converges to $P_\infty$, we apply the convergence criterion of Proposition \ref{prop:conv}.

Let $K \in \cdisk(P_\infty)$.
The set $K$ is compact, and we will apply a compactness argument
to say that there is an $M$ and a lift $\tilde K \subset P_M$ containing the basepoint of $P_M$
so that $j_M|_{\tilde K}$
is a homeomorphism from $\tilde K$ onto $K$ which respects the basepoints.
Then, the inverse of this restriction
$(j_M|_{\tilde K})^{-1}$ is the needed immersion of $K$ into $P_M$. We can then immerse
$K$ into all $P_N$ with $N>M$ by composing with $\iota_{M,N}$. This will prove the first
statement needed from Proposition \ref{prop:conv}.

We will now construct $\tilde K$. Observe that by definition of the fusion,
$P_\infty=\bigcup_n j_n(P_n)$. Otherwise, the union would be a smaller trivial surface with the properties of the fusion, violating the definition of $P_\infty$ as the fusion. See Theorem \ref{thm:generalized fusion}. 

Therefore, for each $x \in K$, there is an
$n(x) \geq 1$ and a $p(x) \in P_{n(x)}$ so that $j_{n(x)}\big(p(x)\big)=x$. 
For the basepoint of $P_\infty$ in $K$, we take $n(x)=1$ and $p(x)$ to be the basepoint of $P_1$. 
For each $x$, let $B_x \subset P_{n(x)}$ be an open metric ball about $p(x)$ so that $\dev|_{B_x}$ is a homeomorphism onto a ball in the plane with the same radius. Then the collection of images
$\{j_{n(x)}(B_x)~:~x \in K\}$
is an open cover of $K$. So there is a finite subcover indexed by the
subset $\{x_1,\ldots, x_k\} \subset K$. We add the basepoint of $P_{\infty}$ to this set 
and call it $x_0$. Consider the collection
$$\sI=\{(i,j) \in \{0,\ldots,k\}^2~:~j_{n(x_i)}(B_{x_i}) \cap j_{n(x_j)}(B_{x_j}) \neq \emptyset\}.$$
Then for each $(i,j) \in \sI$, we can choose points $y \in B_{x_i}$ and $z \in B_{x_j}$
so that $j_{n(x_i)}(y)=j_{n(x_j)}(z)$. By Corollary \ref{cor:fusion finiteness}, there is a finite subset
$\sF \subset \{P_n\}$ containing $P_{n(x_i)}$ and $P_{n(x_j)}$ so that the immersions $P_{n(x_i)} \imm \Fuse \sF$
and $P_{n(x_j)} \imm \Fuse \sF$ send $y$ and $z$ to the same point. Because we are working with a directed sequence, we just
have $\Fuse \sF=P_{N(i,j)}$ where $N(i,j)$ is the maximal index of a planar surface in $\sF$. So, there is an $N=N(i,j)$ so that 
$\iota_{m(x_i),N}(y)=\iota_{m(x_j),N}(z)$. Then, because $j_{n(x_i)}(B_{x_i}) \cap j_{n(x_j)}(B_{x_j})$ is path connected, 
the map $j_{N(i,j)}$ restricted to 
$$\iota_{n(x_i),N(i,j)}(B_{x_i}) \cup \iota_{n(x_j),N(i,j)}(B_{x_j})$$
is injective. We have defined $N(i,j)$ for all $(i,j) \in \sI$. Let $M=\max_{(i,j) \in \sI} N(i,j)$. Then $j_M$ restricted to 
$$\bigcup_{i=0}^k \iota_{n(x_i),M}(B_{x_i})$$
is injective. Then, because $j_M$ is a a local homeomorphism, this restriction is a
homeomorphism onto its image. The image contains $K$. We conclude
that we can set $\tilde K$ equal to the preimage of $K$ under this restriction of $j_M$. 
This verifies the existence of $\tilde K$ and proves that the first statement of Proposition \ref{prop:conv} holds.

Now we consider the second statement of Proposition \ref{prop:conv}.
Suppose that $Q$ is a planar surface, and there is an immersion $k:Q \imm P_n$
for some $n$. Then, $j_n \circ k:Q \imm P_\infty$. This proves the second statement
needed from Proposition \ref{prop:conv}, and concludes the proof that $P_n \to P_\infty$. 
\end{proof}

\subsection{Proofs}

\begin{proof}[Proof of Theorem \ref{thm:compactness}]
Let $P$ be a planar surface. We will prove $\tM \smallsetminus \tM_{\not \imm}(P)$ is compact.
Since $\tM$ is second-countable, so is the subspace $\tM \smallsetminus \tM_{\not \imm}(P)$.
In the presence of second-countability, compactness is implied by {\em sequentially
compactness}, i.e., that every sequence in $\tM \smallsetminus \tM_{\not \imm}(P)$ has a convergent subsequence \cite[Chapter 11, Theorem 1.10]{Joshi}.
So, let $\langle Q_n \rangle_{n \geq 0}$ be a sequence in $\tM \smallsetminus \tM_{\not \imm}(P)$.
We will provide an algorithm which produces a convergent subsequence $\langle Q_{n_k} \rangle_{k \geq 0}$ converging to some limit $R \in \tM \smallsetminus \tM_{\not \imm}(P)$.

Recall that there are only countably many open rational rectangular unions which are homeomorphic to open disks.
See Corollary \ref{cor:countably many classes}. Let $\langle P_m \in \tM \rangle_{m \geq 1}$
be a sequence which enumerates all of these rectangular unions. 
We will construct a subsequence $\langle P_{m_k}\rangle$ of $\langle P_m \rangle$
while simultaneously producing $\langle Q_{n_k}\rangle$. 

Our algorithm is really an inductive sequence of definitions:
\begin{enumerate}
\item[(1)] Set $R_0=P$.
\item[(2)] Set $m_0$, $n_0=0$ and $k=1$.
\item[(3)] Set $\sI_0=\{n~:~n \geq 1\}$. 
\item[(4)] For each successive integer $m \geq 1$, if $P_m \imm Q_n$ for infinitely many $n \in \sI_{k}$, then perform the following steps:
\begin{enumerate}
\item[(a)] Set $m_k=m$. 
\item[(b)] Set $R_{k}=R_{k-1} \fuse P_{m_k}$.
\item[(c)] Set $n_k = \min \{n \in \sI_{k-1}~:~P_{m_k} \imm Q_n\}$. 
\item[(d)] Set $\sI_{k}=\{n \in \sI_{k-1}~:~\text{$P_{m_k} \imm Q_n$ and $n > n_k$}\}$.
\item[(e)] Increment $k$. (Reassign $k$ to be $k+1$.)
\end{enumerate}
\end{enumerate}
Observe that by definition of the fusion, $R_{k-1} \imm R_k$ for all $k \geq 1$.
So by taking a direct limit, we can define $R = \lim_{k \to \infty} R_k$; see Proposition \ref{prop:direct limit}. We make several further remarks about this construction:
\begin{enumerate}
\item[(R1)] For each $k$ and each $l \geq k$, $P_{m_k} \imm Q_{n_l}$. ({\em Proof:} This holds when $k=l$
by definition of $m_k$ and $n_k$. It holds when $l>k$, because each such $n_l$ lies in $\sI_k$.)
\item[(R2)] We have $R_k=P \fuse P_{m_1} \fuse \ldots \fuse P_{m_k}$. 
\item[(R3)] For each $k$ and each $l \geq k$, $R_k \imm Q_{n_l}$. ({\em Proof:} By (R1), each $P_{m_j} \imm Q_{n_l}$ for $j \leq k$. So by the Fusion Theorem and (R2), $R_k \imm Q_{n_l}$.) 
\end{enumerate}

We claim that the subsequence $\langle Q_{n_l}\rangle$ also converges to $R$. To prove this, we will use the convergence criterion of Proposition \ref{prop:conv}. First suppose that $K \in \cdisk(R)$. We will
prove that $K \imm Q_{n_l}$ for $l$ sufficiently large. Since $\langle R_k \rangle$ converges to $R$,
there is a $L$ so that $K \imm R_l$ for $l>L$. So by composing these immersions with the immersions given by remark (R3), we see $K \imm Q_{n_l}$ for $l>L$. 

Now let $U$ be a planar surface. We will show that if $U$ immerses in infinitely many $Q_{n_k}$, then $U \imm R$. 
By Proposition \ref{prop:supremum}, it suffices to prove that every compact disk $K \in \cdisk(U)$
immerses in $R$. Fix $K \in \cdisk(U)$. By Theorem \ref{thm:nested}, there
is an open rational rectangular union in $\disk(U)$ which contains $K$. By definition of $P_m$,
there is an $m$ so that $P_m$ is isomorphic to this union. In particular $P_m \imm U$. 
Since $U \imm Q_{n_k}$ for infinitely many $k$,
it must be true that $P_m$ immerses in infinitely many $Q_{n_k}$. It follows that
$P_m=P_{m_k}$ for some $k$. Then, by the definition of the fusion,
$P_{m_k} \imm R_k$. Because $R_k \imm R$, we know $P_{m_k} \imm R$. Finally because $K \imm P_{m_k}$, 
we know that $K \imm R$. 
\end{proof}

\begin{proof}[Proof of Corollary \ref{cor:local compactness}]
To see $\tM$ is locally compact, choose $P \in \tM$. Let $K$ be a closed disk in $P$. Then 
$$U=\{Q \in \tM:~K^\circ \imm Q\}$$
is a compact and contains the neighborhood $\tM_\imm(K)$ of $P$.

Since $\tM$ is locally compact, so is $\tM \times \R^2$.
We will show that the map $\tE \to \tM \times \R^2$ defined by $(P,p) \mapsto \big(P,\dev(p)\big)$ is a local homeomorphism. Local compactness of $\tE$ follows because we can pullback compact neighborhoods. The map is clearly continuous since both $\tpi$ and $\dev$ are continuous. It is locally continuously invertible because any $(P,p)$ lies in a set of the form $\tE_\imm(K,U)$ where
$U$ is so small that $\dev|_U$ is a homeomorphism onto its image. So, we can recover a point $(Q,q) \in \tE_\imm(K,U)$ from its image $\big(Q,\dev(q)\big)$
by selecting $u \in U$ to be the unique point so that $\dev(u)=\dev(q)$.
Then there is an immersion $\iota:K \imm Q$ and $\iota(u)=q$. Continuity of this local inverse is then provided by Theorem \ref{thm:Continuity of immersions}.
\end{proof}

\begin{proof}[Proof of Corollary \ref{cor:metrizability}]
Since $\tM$ and $\tE$ are locally compact and Hausdorff, they are regular \cite[Exercise 32.3]{Munkres}. Since they are second-countable and regular,
they are metrizable by Urysohn's Metrization Theorem \cite[Theorem 34.1]{Munkres}.
\end{proof}

\appendix
\section{Comparison to McMullen's Geometric Topology}
\label{sect:mcmullen}
McMullen has established a geometric topology on the space of all Riemann surfaces equipped with a base-frame paired with a holomorphic quadratic differential where the Riemann surface is allowed to be of arbitrary topological type \cite[Appendix]{McMullenAmenability}. In brief, McMullen first provides a geometric topology on the space of all Riemann surfaces with a base-frame (a choice of a tangent vector at some basepoint) by identifying such surfaces with a quotient of a simply connected rotationally symmetric domain (disk, plane or sphere) in the Riemann sphere by a subgroup of $\PSL(2,\C)$. This identification is done via the unique uniformization carrying a lift $v$ to the universal cover to the tangent vector $1$ at the origin.
Then given a triple $(X,v,q)$ where $(X,v)$ is a Riemann surface with a base-frame and $q$ is a holomorphic quadratic differential, McMullen considers the lift of $q$ to a holomorphic quadratic differential $\tilde q$ on the universal cover as identified with the domain mentioned above. Such differentials then correspond to
the choice of a holomorphic function on the domain satisfying $\tilde q=\phi(z) dz^2$.
The space of pairs $(X,q)$ is then topologized in such a way so that sequences converge if the underlying Riemann surfaces converge and the functions $\phi$ constructed as above converge uniformly on compact sets. 
See \cite[Appendix]{McMullenAmenability} for a detailed description of the geometric topology.

McMullen's topology differs from the topology here in a philosophical way. The immersive topology introduced here is intended to be a ``geometric topology for the flat structure'' and was designed to impose the minimal amount of structure necessary for making certain elementary geometric arguments (through the use of immersions and embeddings). In contrast, McMullen's geometric topology makes convergence of underlying Riemann surfaces necessary for convergence of flat structures. 

We will demonstrate in this section that our approach gives a less strict notion of convergence. However, it will follow from work in subsequent papers on the immersive topology that this topology agrees with McMullen's geometric topology on natural subspaces such as on the closures of strata of finite genus translation surfaces.

Observe there is a differences in the spaces being topologized by the geometric topology and the immersive topology. The geometric topology works with quadratic differentials with zeros, while our topology only applies to surfaces with a translation structure and no singularities. So, we hope to compare the immersive topology on the space of all translation surfaces with the geometric topology applied to an appropriate subset.
Unfortunately, our planar surfaces involve only the choice of a basepoint, so there is freedom in the choice of a triple $(X,v,q)$ representing a planar surface $P$ corresponding to the choice of a non-zero vector in the tangent plane at the basepoint. This is a minor point, but must be dealt with to make a precise statement relating the topologies. We show:

\begin{theorem}
\label{thm:different}
There is a sequence of planar surfaces $P_n$ which is convergent in the immersive topology so that for any sequence
$(X_n,v_n,q_n)$ translation equivalent to $P_n$, no subsequence of $(X_n,v_n,q_n)$ converges to a representation of a flat surface in the geometric topology.
\end{theorem}

\begin{remark}
\label{rem:compactness difference}
As a consequence of this theorem, in the geometric topology the space of flat surfaces with a lower bound on the injectivity radius is not compact. This property holds for the immersive topology by Theorem \ref{thm:compactness}.
\end{remark}


Our construction utilizes the following elementary observation in complex analysis:

\begin{proposition}
Suppose $P$ is a planar surface uniformized by the plane, and $p \in P$ is a point distinct from the basepoint.
Let $\tilde P$ be the universal cover of $P \smallsetminus p$. Then $\tilde P$ is also uniformized by the plane.
\end{proposition}

Essentially the uniformizing map of $\tilde P$ can be expressed as the uniformizing map for $P$ precomposed with the exponential map. We will not provide a formal proof.

\begin{proof}[Proof of Theorem \ref{thm:different}]
First we produce the sequence $P_n$ inductively. Choose a countable dense subset $\{z_i:i=0,1,2,\ldots\}$
of the unit circle. For each $n \geq 0$, let $D_n$ be the closed unit disk with $\{z_i:~0 \leq i < n\}$ removed.
Define $D_{-1}$ to be the closed unit disk.
Let $P_0$ be the plane viewed as a planar surface. Observe that there is an embedding of the closed unit disk, $\epsilon_0:D_{-1} \emb P_0$. Now we will begin our induction. Assume that $P_n$ is uniformized by the plane,
and there is an embedding $\epsilon_n:D_{n-1} \emb P_n$. Define $p_n = \epsilon_n(z_{n})$. Then let $P_{n+1}$ be the universal cover of $P_n \smallsetminus \{p_n\}$. By the proposition above, $P_{n+1}$ is also uniformized by the plane. By restriction we see that $D_n \emb P_n$. Since $D_n$ is simply connected this embedding lifts to an embedding
$\epsilon_{n+1}:D_n \emb P_{n+1}$. This defines the sequence $P_n$.

We will now verify convergence in the immersive topology. Let $E_i$ be the union of the open unit disk and the singleton $\{z_i\}$. Observe that by construction $E_i \not \emb P_i$.
In fact, $E_i \not \emb P_n$ for $n \geq i$. This may be seen by induction; each $P_{n+1}$ immerses in $P_n$
by construction since $P_{n+1}$ is a covering of $P_n$ with a point removed. So, if there was
an embedding of $E_i$ into $P_n$ for $n \geq i$, then composing with a finite list of immersions would give
and immersion of $E_i$ into $P_i$, but such an immersion must be an embedding which is a contradiction.
It then follows that as planar surfaces $P_n$ converges to the unit disk in the immersive topology,
since no open disk larger than the open unit disk can be immersed in infinitely many $P_n$; compare Example \ref{ex: convergence}.

Now we will consider the behavior of the sequence in the geometric topology. 
Select for each $P_n$ a uniformizing map $h_n:\C \to P_n$ carrying $0$ to the basepoint of $P_n$. (This involves a choice for each $n$.) Let $f_n:\C \to \C$ be $\dev \circ h_n$. Let $q_n=f'_n(z)^2 dz^2$. 
Suppose after passing to a subsequence, $(\C,v,q_{n_k})$ converges (where $v$ is the tangent vector $1$ based at zero). The limit is then of the form $(\C,v,q)$ for some quadratic differential $q$ on $\C$ and we know by definition of the topology that $q=\phi(z) dz^2$ and $f_{n_k}'(z)^2$ converges to $\phi$ uniformly on compact sets. By passing to another subsequence (because of the square), we can assume that $f_{n_k}(z)$ converges to a holomorphic function $f:\C \to \C$ satisfying $f(0)=0$ uniformly on compact sets. 

Now assume that the limiting differential represents a surface. Then in particular, $f$ is not identically zero,
and so $f$ misses at most one point of the plane. In particular, there is a $z \in \C$ so that $|f(z)|>1$. 
Then $|f_{n_k}(z)|>1$ for $k$ sufficiently large. Consider the embedding $\epsilon_{n_k}: D_{n_k-1} \emb P_n$ and recall that $\epsilon_{n_k}$ can not be extended to a larger subset of the closed disk. 
Let $\Gamma_{k}$ be the intersection of $D_{n_k-1}$ with the unit circle, which consists of $n_k-1$ arcs of the unit circle running between the points $\{z_i:0 \leq i< n_{k}-1\}$. Let 
$C_{k}=f_{n_k}^{-1} \circ \epsilon_{n_k}(\Gamma_{k})$, which consists of $n_k-1$ disjoint arcs in $\C$ which extend to closed arcs in the Riemann sphere $\hat \C$ initiating and terminating at $\infty$. For $k$ sufficiently large, one of these arcs say $c_{k} \subset C_{k}$ separates $0$ from $z$ since $|f_{n_k}(z)|>1$. Choose a large closed ball about the origin $B \subset \C$ so that $z$ lies in the interior of $B$. Then by passing to a subsequence, we can assume that the intersections $c_{k} \cap B$ converge
in the Hausdorff topology on closed subsets of $B$ to some closed subset $c \subset B$. Each of $c_k \cap B$ has some definite length since $c_k$ separates $0$ from $z$. The length of the associated arc $\gamma_k \subset \Gamma_k$ of $D_{n_k-1}$ can be computed in terms of $f_{n_k}'$ as
\begin{equation}
\label{eq:integral}
\int_{c_k} |f_{n_k}'(z)| dz.
\end{equation}
Because of our choice of points $\{z_i\}$ dense in the circle, the length of $\gamma_k$ tends to zero,
so these integrals tend to zero. Let $w \in c \subset B$. If $|f(w)|>0$, then there would be an $\epsilon>0$ and a definite neighborhood $W$ about $w$ so that $|f_{n_k}(w')|>\epsilon$ for $w' \in W$ and $k$ large enough, but this contradicts the 
convergence of \eqref{eq:integral} to zero. Therefore, $f$ is identically zero on $c$, but this violates the fact that non-zero holomorphic functions have isolated zeros.
\end{proof}

\vspace{1em}

\noindent
{\bf Acknowledgments:} The author would like to thank Joshua Bowman for helpful conversations at the beginning of this work, Anja Randecker for numerous helpful comments, and the anonymous referee for making numerous helpful suggestions. The author thanks Matt Bainbridge for pointing out McMullen's geometric topology. The impetus for writing this article came from the conference ``International conference and workshop on surfaces of infinite type'' held at Centro de Ciencias Matem\'aticas de la UNAM in Morelia, Mexico. Early versions of this paper were written while visiting the Institute for Computational and Experimental Research in Mathematics (ICERM).

\bibliographystyle{amsalpha}
\bibliography{../../bibliography}
\end{document}